\newdimen\bibspace
\renewenvironment{thebibliography}[1]{%
 \section*{\refname %or \bibname if you use ``book'' as the documentclass
       \@mkboth{\MakeUppercase\refname}{\MakeUppercase\refname}}%
     \list{\@biblabel{\@arabic\c@enumiv}}%
          {\settowidth\labelwidth{\@biblabel{#1}}%
           \leftmargin\labelwidth
           \advance\leftmargin\labelsep
           \itemsep\bibspace
           \parsep\z@skip     %
           \@openbib@code
           \usecounter{enumiv}%
           \let\p@enumiv\@empty
           \renewcommand\theenumiv{\@arabic\c@enumiv}}%
     \sloppy\clubpenalty4000\widowpenalty4000%
     \sfcode`\.\@m}
    {\def\@noitemerr
      {\@latex@warning{Empty `thebibliography' environment}}%
     \endlist}
\newtheorem{thm}{Theorem}[section]
\newtheorem{lem}[thm]{Lemma}
\newtheorem{prop}[thm]{Proposition}
\newtheorem{cor}[thm]{Corollary}
\newtheorem{rem}[thm]{Remark}
\def\XXint#1#2#3{{\setbox0=\hbox{$#1{#2#3}{\int}$}
  \vcenter{\hbox{$#2#3$}}\kern-.5\wd0}}
\newcommand{\al}{\alpha}                \newcommand{\lda}{\lambda}
\newcommand{\om}{\Omega}                \newcommand{\pa}{\partial}
\newcommand{\va}{\varepsilon}           \newcommand{\ud}{\mathrm{d}}
\newcommand{\be}{\begin{equation}}      \newcommand{\ee}{\end{equation}}
\newcommand{\w}{\omega}                 
\newcommand{\R}{\mathbb{R}}
\begin{document}

\title{\textbf{Classification theorems for solutions of higher order boundary conformally invariant problems, I}}
\author{Liming Sun \  \  and \ \
Jingang Xiong\footnote{Supported in part by NSFC 11501034, NSFC 11571019,  Beijing MCESEDD (20131002701) and the Fundamental Research Funds for the central University  (2015NT03).}}

\date{\today}

\fancyhead{}
\fancyhead[CO]{\textsc{Classification Theorem}}
\fancyhead[CE]{\textsc{L. Sun \& J. Xiong}}

\fancyfoot{}

\fancyfoot[CO, CE]{\thepage}

\renewcommand{\headrule}{}

\maketitle

\begin{abstract} In this paper, we prove that nonnegative polyharmonic functions on the upper half space satisfying a  conformally invariant nonlinear boundary condition have to be the ``\emph{polynomials} plus \emph{bubbles}" form. The nonlinear problem is motivated by the recent studies of boundary GJMS operators and the $Q$-curvature in conformal geometry. The result implies that in the conformal class of the unit Euclidean ball there exist metrics with a single singular boundary point  which have flat $Q$-curvature and constant boundary $Q$-curvature. Moreover, all of such metrics are classified.  This phenomenon differs from that of boundary singular metrics which have flat scalar curvature and constant mean curvature, where the singular set contains at least two points.
A crucial ingredient of the proof is developing an approach to separate the higher order linear effect and the boundary nonlinear effect so that the kernels of the nonlinear problem are captured.
\end{abstract}

\tableofcontents

\section{Introduction}

In the classical paper \cite{CGS}, Caffarelli-Gidas-Spruck established the asymptotic behavior for local positive solutions of the elliptic equation $-\Delta u= n(n-2)u^{\frac{n+2}{n-2}}$, $n\ge 3$, near an isolated singularity. Consequently, they proved that any positive entire solution of the equation
has to be the form
\[
\left(\frac{\lda}{1+\lda^2|x-x_0|^2}\right)^{\frac{n-2}{2}} \quad \mbox{for some }\lda>0, ~ x_0\in \R^n.
\]
Particular interests of the above equation  lie in its relation to the Yamabe problem (see Lee-Park \cite{LP}).  Such Liouville type theorem has been extended to general conformally invariant nonlinear equations; see Lin \cite{Lin} and Wei-Xu \cite{WX} for higher order semi-linear equations, Chen-Li-Ou \cite{CLO}, Li \cite{Li04} and many others for integral equation, as well as Li-Li \cite{LL} for fully nonlinear second order elliptic equations.

Li-Zhu \cite{Li-Zhu} and Ou \cite{Ou} independently proved that any positive solution of
\begin{align}
\label{eq:2nd-a}
-\Delta u(x,t)&=0 \quad \mbox{in }\R^{n+1}_+:=\R^n\times (0,\infty),\\
-\pa_t u(x,0)&=(n-1)u^{\frac{n+1}{n-1}} \quad \mbox{on }\pa \R^{n+1}_+,
\label{eq:2nd-b}
\end{align}
where $n\ge 2$,  has to be the form
\be\label{eq:m1bubble}
\left(\frac{\lda}{\lda^2|x-x_0|^2+(\lda t+1)^2}\right)^\frac{n-1}{2}, \quad \lda > 0,x_0\in \R^n.
\ee  Throughout the paper $\pa \R^{n+1}_+$ does not contain the infinity.    See also Beckner \cite{Bec} and Escobar \cite{Es90} if $u$ is an extremal of the sharp Sobolev trace inequality, and  Li-Zhang \cite{Li-Zhang}, Jin-Li-Xiong \cite{JLX}  and references therein for related results.  The isolated singularity problem has been studied recently by Caffarelli-Jin-Sire-Xiong \cite{C+}, DelaTorre-Gonz\'alez \cite{DG} and DelaTorre-del Pino-Gonzalez-Wei \cite{DD+} as a special case. The nonlinear problem \eqref{eq:2nd-a}-\eqref{eq:2nd-b} arises from  a boundary Yamabe problem or Riemann mapping problem of Escobar \cite{Es}, sharp trace inequalities, nonlinear Neumann problems (see Cherrier \cite{Cher}), and etc.

By the work Feffermann-Graham \cite{FG}, Graham-Jenne-Mason-Sparling \cite{GJMS}, and  Graham-Zworski \cite{GZ}, there defines a class of conformally invariant operators on the conformal infinity of Poincar\'e-Einstein manifolds via scattering matrices. Such conformally invariant operators define fractional $Q$-curvatures.  By the work Caffarelli-Silvestre \cite{CaffS}, Chang-Gonz\'alez \cite{CG}, Yang \cite{Y} and Case-Chang \cite{CaC},  the boundary Yamabe problem mentioned above is the constant first order $Q$-curvature problem. If one considers the constant odd order $Q$-curvature problem on the conformal infinity of Poincar\'e ball or hyperbolic upper half space, it will lead to study positive solutions of nonlinear boundary value problem of polyharmonic equations
\begin{align}
\Delta^m u(x,t)=0 \quad& \mbox{in }\R^{n+1}_+,\label{eq:bi-laplace}\\
\pa_t \Delta^k u(x,0)=0, \quad (-1)^m\pa_t \Delta^{m-1}& u(x,0)= u^{\frac{n+(2m-1)}{n-(2m-1)}} \quad \mbox{on }\pa \R^{n+1}_+,\label{eq:bi-b1}
\end{align}
where $2\le 2m<n+1$ is an integer, $k=0,1,\dots, m-2$. One may view $(-1)^m\pa_t \Delta^{m-1}$ as $(-\pa_t)(-\Delta)^{m-1} \sim (-\Delta)^{\frac12}(-\Delta)^{m-1}$. Hence, the above problem connects to
\be \label{eq:core}
(-\Delta)^{\frac{2m-1}{2}} u=u^{\frac{n+(2m-1)}{n-(2m-1)}} \quad \mbox{in } \R^{n}.
\ee However, we will see that \eqref{eq:bi-laplace}-\eqref{eq:bi-b1} admits more solutions. Since we do not assume $u$ to be a minimizer or belong to some Sobolev space of $\R^{n+1}_+$, there is no information of $u$ near the infinity.

In this paper, we classify solutions of problem \eqref{eq:bi-laplace}-\eqref{eq:bi-b1} and the subcritical cases. Consider
\be \label{eq:thma-1}
\begin{cases}
\Delta^{m} u(x,t)=0 \quad &\text{in }\R^{n+1}_+,\\
\pa_t \Delta^k u(x,0)=0\quad &\text{on }\pa \R^{n+1}_+,\quad k=0,1,\dots, m-2, \\
(-1)^m\pa_t \Delta^{m-1} u(x,0)= u^{p} \quad &\text{on }\pa \R^{n+1}_+,
\end{cases}
\ee
where $2\le2m<n+1$ is an integer and  $1< p\leq \frac{n+(2m-1)}{n-(2m-1)}$. We will show the nonnegative solutions of this problem are the composition of the following "bubbles" and some polynomials
\be \label{eq:bubble}
U_{x_0,\lda}(X)=c(n,m) \int_{\R^n} \frac{t^{2m-1}}{(|x-y|^2+t^2)^{\frac{n+2m-1}{2}}}\left(\frac{\lda}{1+\lda^2|y-x_0|^2}\right)^{\frac{n-2m+1}{2}} \,\ud y
\ee
where $x_0\in \R^n$ and $\lda> 0$ and $c(n,m)>0$ is some normalizing constant. The presence of polynomial part is a new phenomenon.
More precisely
\begin{thm} \label{thm:main-a} Let $u\ge 0$ be a $C^{2m}(\R^{n+1}_+\cup \pa \R^{n+1}_+)$ solution of \eqref{eq:thma-1}.
 In case of that $m$ is even, we additionally suppose that $u(x,t)=o((|x|^2+t^2)^{\frac{2m-1}{2}})$ as $x^2+t^2\to\infty$. Then
\begin{itemize}
\item[(i)] If $p=\frac{n+(2m-1)}{n-(2m-1)}$, we have
\[
u(x,t)= U_{x_0,\lda}(x,t)+\sum_{k=1}^{m-1} t^{2k}P_{2k}(x),
\]
where $U_{x_0,\lda}$ is defined in \eqref{eq:bubble} for some $x_0\in \R^n$ and $\lda\ge 0$, and $P_{2k}(x)$ is a polynomial of degree $\leq 2m-2-2k$ satisfying $\liminf_{x\to \infty}P_{2k}(x)\ge 0$.
\item[(ii)] If $1< p< \frac{n+(2m-1)}{n-(2m-1)}$, we have
\[
u(x,t)=\sum_{k=1}^{m-1} t^{2k}P_{2k}(x),
\]
where $P_{2k}(x)\ge 0$ is a polynomial of degree $\leq 2m-2-2k$.
\end{itemize}

\end{thm}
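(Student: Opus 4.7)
The plan is to split $u$ into a ``nonlinear bubble'' part and a ``polynomial kernel'' part, classify each piece separately, and then recombine them. Concretely, I would introduce
\[
U(x,t) := c(n,m) \iR \frac{t^{2m-1}}{(|x-y|^2+t^2)^{(n+2m-1)/2}}\, u(y,0)\, dy,
\]
the order $(2m-1)$ Poisson-type extension of the boundary trace, normalized so that $U(x,0)=u(x,0)$. A direct calculation shows $\Delta^m U=0$ in $\Rh$ with $\pa_t\Delta^k U(x,0)=0$ for $k=0,\ldots,m-2$, while the generalized Caffarelli--Silvestre type identity (in the spirit of Chang--Gonz\'alez, Case--Chang, Yang) yields $(-1)^m\pa_t\Delta^{m-1}U(x,0) = \alpha_{n,m}(-\Delta)^{(2m-1)/2}u(x,0)$ for a positive constant $\alpha_{n,m}$. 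The growth hypothesis on $u$ when $m$ is even, together with $u\ge0$, places $u(\cdot,0)$ in the weighted space $L^1(\R^n,(1+|y|)^{-(n+2m-1)}\,dy)$ needed for the integral defining $U$ to converge; when $m$ is odd, the analogous bound can be derived from the positivity of the harmonic function $(-1)^{m-1}\Delta^{m-1}u$, which by \eqref{eq:thma-1} satisfies the nonpositive Neumann condition $\pa_t[(-1)^{m-1}\Delta^{m-1}u](\cdot,0)=-u^p$.

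Setting $P:=u-U$, one finds $\Delta^m P=0$ in $\Rh$, $P(x,0)=0$, and $\pa_t\Delta^k P(x,0)=0$ for every $k=0,\ldots,m-1$. Expanding $\Delta^k=(\pa_t^2+\Delta_x)^k$ and using these relations inductively forces every odd normal derivative $\pa_t^{2j+1}P(x,0)=0$ for $j=0,\ldots,m-1$, so the even reflection $\widetilde P(x,t):=P(x,|t|)$ extends to a $C^{2m}$ polyharmonic function on $\R^{n+1}$. The growth of $\widetilde P$ is controlled by that of $u$ together with the decay of $U$ at spatial infinity, so the classical Liouville theorem for entire polyharmonic functions of growth $o(|X|^{2m-1})$ forces $\widetilde P$ to be a polynomial of degree at most $2m-2$. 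Evenness in $t$ together with $P(x,0)=0$ then produce the claimed form $P(x,t)=\sum_{k=1}^{m-1} t^{2k}P_{2k}(x)$ with $\deg P_{2k}\le 2m-2-2k$, and $\liminf_{|x|\to\infty}P_{2k}(x)\ge0$ follows from $u\ge0$ and the pointwise decay of $U$ on complements of compact sets.

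It then remains to classify $U$ itself. Combining the extension identity with the nonlinear boundary equation of $u$, and using $\pa_t\Delta^{m-1}P(x,0)=0$ from the previous step, collapses the problem on the boundary to the integral equation
\[
u(x,0) = c' \iR \frac{u(y,0)^p}{|x-y|^{n-(2m-1)}}\, dy \qquad \text{on }\R^n.
\]
In the critical case $p=(n+2m-1)/(n-2m+1)$ the classification of Chen--Li--Ou gives $u(x,0) = (\lda/(1+\lda^2|x-x_0|^2))^{(n-2m+1)/2}$ up to normalization, for some $x_0\in\R^n$ and $\lda\ge0$, whence $U=U_{x_0,\lda}$ and $u=U_{x_0,\lda}+P$ as claimed. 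In the subcritical case $1<p<(n+2m-1)/(n-2m+1)$, a Pohozaev identity or moving-plane argument for the integral equation forces $u(x,0)\equiv0$, so $U\equiv0$; then the previous paragraph applied directly to $u$ together with $u\ge0$ throughout $\Rh$ upgrades the $\liminf$ to $P_{2k}\ge0$ globally.

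The central difficulty is Step 1: establishing sufficient integrability of $u(\cdot,0)$ at infinity so that the Poisson extension $U$ converges, and verifying the order-$(2m-1)$ extension identity at the generality required here. Once this representation is secured, the polynomial classification reduces to a Liouville-type statement about polyharmonic functions after even reflection, and the bubble classification reduces to Chen--Li--Ou. The case $m$ odd, where no explicit growth hypothesis is imposed, is additionally delicate: one must upgrade nonnegativity of the auxiliary polyharmonic quantity $(-1)^{m-1}\Delta^{m-1}u$, which is a harmonic function with nonpositive Neumann trace, into a true growth bound on $u$ itself by iterated Poisson/mean-value arguments.
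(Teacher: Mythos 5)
Your decomposition $u=U+P$ with $U=\mathcal{P}_m*u(\cdot,0)$ is in spirit the paper's own first step (there it appears as $v=(v^*)_{0,1}$ after a Kelvin transform), but the proposal has two genuine gaps, and they are exactly where the paper does its real work. First, your claim that $\pa_t\Delta^{m-1}P(x,0)=0$ is unjustified and, as stated, circular: $(-1)^m\pa_t\Delta^{m-1}u(x,0)=u^p$ while $(-1)^m\pa_t\Delta^{m-1}U(x,0)$ is (at best) a multiple of $(-\Delta)^{\frac{2m-1}{2}}u(\cdot,0)$, and the equality of these two quantities is essentially the conclusion of the theorem, not an input. Without that top-order condition the even reflection of $P$ is only $C^{2m-2}$ across $\{t=0\}$ and the Liouville argument does not exclude a $t^{2m-1}$ term; indeed the solutions $H_a$ in Remark \ref{rem:extra} show such a term genuinely occurs for even $m$ when the growth hypothesis is dropped. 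The paper handles this by allowing the extra term $c_0t^{2m-1}$ in Proposition \ref{prop:Ch-sing} and then devoting Lemma \ref{lem:c-zero} to proving $c_0=0$ under the stated hypotheses (using the ODE for spherical averages when $m$ is odd, and the growth assumption when $m$ is even). Relatedly, your assertion that $(-1)^{m-1}\Delta^{m-1}u\ge0$ for odd $m$ has no justification: a harmonic function with nonpositive Neumann data need not be nonnegative, and the absence of any maximum principle for $m\ge2$ is precisely the obstruction the paper is built around.

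Second, you correctly identify the integrability of $u(\cdot,0)$ at infinity (equivalently $\int_{D_1}|x|^{-\tau}u^*(x,0)^p\,dx<\infty$ after Kelvin transform) as ``the central difficulty,'' but you do not prove it, and it does not follow from soft arguments. This is the content of Proposition \ref{prop:integrability}, which occupies most of Section 4: one first gets $u^*\in L^1(B_1^+)$ by a test-function argument, then bootstraps weighted integrability through Lemmas \ref{lem:upgrade}--\ref{lem:dividing} (which already settles $p>\frac{n}{n-2m+1}$), and for $1<p\le\frac{n}{n-2m+1}$ one needs the dichotomy Lemma \ref{lem:induction} for iterated radial Laplacians together with the balance estimate of Lemma \ref{lem:balance} to rule out a $r^{2m-1-n}$ lower bound for $\bar{v^*}$. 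Without this step neither your extension $U$ nor the integral equation on the boundary is even well defined. A smaller but real issue: the subcritical classification you invoke requires the kernel representation $u(x,0)=c'\int u(y,0)^p|x-y|^{2m-1-n}\,dy$, which the paper obtains from the splitting Proposition \ref{prop:splitting} via the boundary B\^ocher theorem; your route to it presupposes the two gaps above. In short, the skeleton of your argument matches the paper, but the load-bearing steps are missing.
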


\begin{rem}
 For $m=1$, $U_{x_0,\lda}$ defined \eqref{eq:bubble} equals \eqref{eq:m1bubble} up to a constant. For $m=2$, we have
\begin{align*}
U_{x_0,\lda}(X)
=C(n) \left(\frac{\lambda}{(1+\lambda t)^2+\lambda^2|x-x_0|^2}\right)^{\frac{n-3}{2}}+C(n)(n-3)t\left(\frac{\lambda}{(1+\lambda t)^2+\lambda^2|x-x_0|^2}\right)^{\frac{n-1}{2}}
\end{align*}
with $C(n)=[2(n-3)(n^2-1)]^{\frac{n-3}{6}}$.
\end{rem}
\begin{rem}\label{rem:extra}
 If $m$ is even and the growth condition is removed, there is another class of solutions
\begin{align}\label{eq:specialsolution}
H_a(x,t)=\frac{a}{(2m-1)!}t^{2m-1}+ a^{\frac{1}{p}}, \quad a\ge 0.
\end{align}
We conjecture that for even $m$, all solutions have to be
\[
\sum_{k=1}^{m-1} t^{2k}P_{2k}(x) + H_a(x,t) \quad \mbox{or} \quad \sum_{k=1}^{m-1} t^{2k}P_{2k}(x)+ U_{x_0,\lda}(x,t)
\]
if $p=\frac{n+2m-1}{n-2m+1}$, while only the former expression can happen if $1<p<\frac{n+2m-1}{n-2m+1}$.
\end{rem}

By conformally transforming the upper half space to the unit ball, Theorem \ref{thm:main-a} implies that  in the conformal class of the unit Euclidean ball there exist metrics with a single singular boundary point  which have flat $Q$-curvature and constant boundary $Q$-curvature. See Section \ref{sec:appl} of the paper for more details. When $m=1$, there is  no such metric which is singular on single boundary point because the polynomial part vanishes and the bubble is smooth at the infinity. Hence, boundary singular metrics have at least two singular points which is similar to the singular metrics on the unit sphere of constant scalar curvature; see Caffarelli-Gidas-Spruck \cite{CGS} and Schoen \cite{Schoen}. Other possible applications of Theorem \ref{thm:main-a} would be seen in Jin-Li-Xiong \cite{JLX3}, Li-Xiong \cite{LX} and references therein.

The proofs of Theorem \ref{thm:main-a} for $m=1$  by Li-Zhu \cite{Li-Zhu} or Ou \cite{Ou} rely on the maximum principle in order to use the moving spheres/planes method.  In contrast, for $m\ge 2$ the elliptic operators have nontrivial kernels and thus solutions of \eqref{eq:thma-1} could lose the maximum principle. To extract the kernels, we need to analyze the behavior of $u$ near the infinity. Due to the conformal invariance of equations, the $m$-Kelvin transform $u^*$ of $u$ with respect to the unit sphere  satisfies \eqref{eq:bi-laplace} and
\[
\pa_t \Delta^k u^*(x,0)=0, \quad (-1)^m\pa_t \Delta^{m-1} u^*(x,0)= |x|^{-\tau}{u^*}(x,0)^{p} \quad \mbox{in }\pa \R^{n+1}_+\setminus\{0\},
\]
where $k=0,1,\dots, m-2$, and  $\tau=[n+(2m-1)]-p[n-(2m-1)]\geq 0$. As Caffarelli-Gidas-Spruck \cite{CGS}, Lin \cite{Lin} and Wei-Xu \cite{WX} did, one may wish to show $|x|^{-\tau}{u^*}(x,0)^{p}\in L^1$ near $0$. However, since the linear equation itself would generate higher order singularities than the nonlinear term does, the methods of \cite{CGS}, \cite{Lin} and \cite{WX} seem not to be applicable to $m\ge 2$. Even worse, this is wrong when $m$ is even;
see for instance the $m$-Kelvin transform of $H_a$, $a>0$, in Remark \ref{rem:extra}. In fact, the method of \cite{CGS} is by constructing test function which is of second order equation nature. And it is unclear how to adapt the ODE analysis procedure of \cite{Lin} and \cite{WX} to our setting without information about the possible kernels. As the initial step, we prove that $u^*(x,0)$ belongs to $L^1$ (see Lemma \ref{lemma:babyversion}), and then by a Poisson extension we are able to capture the singularity generated by the linear equation. A Liouville type theorem (see Proposition \ref{prop:Ch-sing} and Theorem \ref{thm:positive_homo}) for polyharmonic functions with a homogeneous boundary data plays an important role. Our method of proof of Theorem \ref{thm:positive_homo} is very  flexible and can be easily adapted to polyharmonic functions with other  homogeneous boundary data. Next, by subtracting the linear effect we prove $|x|^{-\tau}{u^*}(x,0)^{p}\in L^1$, where the growth condition is assumed if $m$ is even. In this step a new method is developed.  In particular, if $p$ is less than the Serrin's exponent $\frac{n}{n-2m+1}$ we have to spend extra efforts. By a Neumann extension of $|x|^{-\tau}{u^*}(x,0)^{p}$ and making use of a boundary B\^ocher theorem (see Corollary \ref{cor:Bocher}), we prove
a crucial splitting result for $u$; see Proposition \ref{prop:splitting}. It captures the polynomials $\sum_{k=1}^{m-1} t^{2k}P_{2k}(x)$ and implies the maximum principle for $v(x,t):=u(x,t)-\sum_{k=1}^{m-1} t^{2k}P_{2k}(x)$ which is completely controlled by the nonlinear effect. Since $v(x,0)=u(x,0)$, $v$ satisfies a nonlinear integral equation. By Chen-Li-Ou \cite{CLO}, Li \cite{Li04}, or Dou-Zhu \cite{DZ}, $v(x,t)$ is then classified.

Our method of proof of Theorem \ref{thm:main-a} can be applied to constant fractional $Q$-curvature equation on the conformal infinity of hyperbolic upper half space, and can be applied to multiple nonlinear boundary conditions; see Chang-Qing \cite{CQ}, Branson-Gover \cite{BG} and Case \cite{Case} for the discussions of other conformally invariant boundary operators. We leave them to another paper.

 If $2m=n+1$, \eqref{eq:bi-b1} will be replaced by
\be \label{eq:bi-b2}
\pa_t \Delta^k u(x,0)=0, \quad (-1)^m\pa_t \Delta^{m-1} u(x,0)=e^{(2m-1)u} \quad \mbox{on }\pa \R^{n+1}_+
\ee
and $u$ is not necessarily positive. When $m\ge 2$, in order to have a classification theorem one has to assume that (i) $\int_{\R^n}e^{(2m-1)u(x,0)}<\infty$, (ii) $|u(x,0)|=o(|x|^2)$ near the infinity, (iii) certain growth conditions on $u(x,t)$ near the infinity.
See, for instance, Jin-Maalaoui-Martinazzi-Xiong \cite{J+} and references therein on why (i) and (ii) can not be dropped. Given (i), (ii) and (iii), one can prove a splitting theorem like Theorem \ref{thm:main-a} easily by the B\^ocher theorem (see Corollary \ref{cor:Bocher}) and Xu \cite{X}. We decide not to pursue it in this paper.

Finally, we remark that there have been many papers devoted to Liouville theorems for nonnegative solutions of nonlinear polyharmonic equations with the homogeneous Dirichlet boundary condition or homogeneous Navier boundary condition; see Reichel-Weth \cite{RW}, Lu-Wang-Zhu \cite{LWZ}, Chen-Fang-Li \cite{CFL} and references therein, where they proved that $0$ is the unique solution.

The organization of the paper is shown in the table of contents.
\medskip

\textbf{Notations:}

\begin{tabular}{ll}
  $X$& $(x,t)=(x^1,\dots,x^n,t)\subset\R^{n+1}$\\
  $B_r(X)$ &  ball with radial $r$ centered at $X$ in $\R^{n+1}$ and $B_r=B_r(0)$ \\
   $B_r^+$& $B_r\cap \R^{n+1}_+$\\
   $\pa^+ B_r^+$& $\pa B_r^+\cap \R^{n+1}_+$\\
  $D_r$ &  ball centered at the origin in $\R^{n}$, identifying $D_r=\pa B^+_r\backslash \overline{\pa^+B_r^+}$\\
  $[f]_r$ & $\fint_{\pa D_r}f\ud\sigma$, the integral average of $f$ over $\pa D_r$ \\
  $\chi_A$ & the characteristic function of the measurable set $A$ in the Euclidean spaces
\end{tabular}

\medskip

We will always assume $2m<n+1$ if it is not specified.  We will use Green's identity and its variants repeatedly:
\begin{align*}
\int_{B_1^+}(u\Delta^m\phi-\phi\Delta^m u) \,dX=&\sum_{i=1}^m\int_{\partial^+B_1^+}\left[(\Delta^{i-1}u)\frac{\partial(\Delta^{m-i}\phi)}{\partial\nu}-(\Delta^{m-i}\phi)\frac{\partial(\Delta^{i-1}u)}{\partial\nu}\right]dS\\
&-\sum_{i=1}^m\int_{D_1}\left[(\Delta^{i-1}u)\partial_t(\Delta^{m-i}\phi)-(\Delta^{m-i}\phi)\partial_t(\Delta^{i-1}u)\right]dx
\end{align*}
where $\nu$ is the outer unit normal of $\partial^+B_1^+$.

\bigskip

\section{Preliminary}
\label{sec:pre}

Let us recall that $\Delta^m$ is invariant under the $m$-Kelvin transformations
\[
u_{X_0,\lda}(X):=\left(\frac{\lda}{|X-X_0|}\right)^{n-2m+1} u\left(X_0+\frac{\lda^2 (X-X_0)}{|X-X_0|^2}\right),
\]
where $2m<n+1$, $X_0\in \R^{n+1}$ and $\lda>0$. Namely, if $u\in C^{2m}(\R^{n+1})$ then there holds
\be\label{eq:k-invariant}
\Delta^m u_{X_0,\lda}(X)=\left(\frac{\lda}{|X-X_0|}\right)^{n+2m-1}\Delta^m u\left(X_0+\frac{\lda^2 (X-X_0)}{|X-X_0|^2}\right) \quad \mbox{for }X\neq X_0.
\ee

There are various of boundary conditions for the polyharmonic equation, see Agmon-Douglis-Nirenberg \cite{ADN} or Gazzola-Grunau-Sweers \cite{GGS}. For the later use, we only consider two of them. One is like the Dirichlet condition and the other is a Neumann condition. We will be concerned with bounds of singular integrals involving the Poisson kernel and Neumann function, respectively. These bounds will play important roles in the proof of the main theorem.

\subsection{Poisson kernel for a Dirichlet problem}

Let us consider  the boundary value problem
\begin{align} \label{eq:DVP}
\begin{cases}
\Delta^m v(x,t)=0&\text{in }\R^{n+1}_+,\\
v(x,0)=f(x)&\text{on }\pa\,\R^{n+1}_+,\\
\pa_t\Delta^{k}v(x,0)=0&\text{on }\pa\,\R^{n+1}_+,
\end{cases}
\end{align}
where $f$ is a smooth bounded function in $\R^n$, and $k=0,\dots, m-2$ (if $m=1$, then we do not have this boundary condition).  Let
\[
\mathcal{P}_m(x,t)= \beta(n,m)\frac{t^{2m-1} }{(|x|^2+t^2)^{\frac{n+2m-1}{2}}},
\]
where $\beta(n,m)=\pi^{-\frac{n}{2}}\Gamma(\frac{n+2m-1}{2})/\Gamma(m-\frac 12)$ is  the normalizing constant such that
\[\int_{\R^n} \mathcal{P}_m(x,1)\,\ud x=1.\]  Note that $\mathcal{P}_1$ is the standard upper space Poisson kernel for Laplace equation.  Define
\be \label{eq:Poisson-ext}
v(x,t)=\mathcal{P}_m*f(x,t)=\beta(n,m)\int_{ \R^{n}} \frac{t^{2m-1} f(y)}{(|x-y|^2+t^2)^{\frac{n+2m-1}{2}}}\,\ud y.
\ee

\begin{lem}\label{lem:Poisson} If $f\in L^q(\R^n)$ for some $1\le q\leq \infty$, then $v$ belongs to weak-$L^{\frac{n+1}{n}}(\R^{n+1}_+)$ if $q=1$ and belongs to $L^{\frac{(n+1)q}{n}}(\R^{n+1}_+)$ if $q>1$. Moreover,
\[
\left|\{X:|\mathcal{P}_m*f(X)|>\lambda\}\right| \le c(n,m,1)\lda^{-\frac{n+1}{n}}\|f\|_{L^1(\R^n)}^{\frac{n+1}{n}}, \quad \forall ~ \lda>0,
\]
and
\[||\mathcal{P}_m*f||_{L^\frac{(n+1)q}{n}(\R^{n+1}_+)}\leq c(n,m,q)||f||_{L^q(\R^n)}, \quad \mbox{for }q>1,\]
where $c(n,m,q)>0$ is constants depending only $n,m $ and $q$.
\end{lem}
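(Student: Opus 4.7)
The plan is to reduce the estimate to a fractional-integral (Riesz-potential) bound and then apply Marcinkiewicz interpolation between a weak $L^1$ endpoint and a trivial $L^\infty$ endpoint. First, observe that for $X=(x,t)\in \Rh$ and $y\in\R^n$ one has $t^{2m-1}\le |X-(y,0)|^{2m-1}$, so
\[
\mathcal{P}_m(x-y,t) = \beta(n,m)\frac{t^{2m-1}}{|X-(y,0)|^{n+2m-1}} \le \frac{\beta(n,m)}{|X-(y,0)|^n}.
\]
Hence $|v(X)|\le \beta(n,m)\, Tf(X)$, where $Tf(X):=\int_{\R^n}|f(y)|\,|X-(y,0)|^{-n}\,\ud y$. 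The scaling of this kernel (integrating an $n$-dimensional function against $|X-Y|^{-n}$ and measuring the output on $(n+1)$-dimensional space) is precisely what forces the exponent $(n+1)q/n$.

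For the endpoint $q=1$ I would prove a weak type bound for $T$ by a layer-cake/Chebyshev argument. The key ingredient is the uniform volume bound
\[
\int_E |X-(y,0)|^{-n}\,\ud X \le C\, |E|^{\frac{1}{n+1}} \qquad \text{for every measurable } E\subset \Rh \text{ and every } y\in\R^n,
\]
obtained by rearranging $E$ to the half-ball of equal measure centered at $(y,0)$ and computing in polar coordinates (the integrand is $|X-(y,0)|^{-n}$ on an $(n+1)$-dimensional domain, producing a factor of $R\sim |E|^{1/(n+1)}$). Applying Fubini with $E=\{X:\beta Tf(X)>\lda\}$ and Chebyshev yields $\lda\,|E|\le C|E|^{\frac{1}{n+1}}\|f\|_{L^1}$, which rearranges to $|E|\le C\lda^{-(n+1)/n}\|f\|_{L^1}^{(n+1)/n}$. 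Since $\{|v|>\lda\}\subset\{\beta Tf>\lda\}$, this is the claimed weak $(1,(n+1)/n)$ bound for $v$.

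For the other endpoint I would not use the majorant $Tf$, which is wasteful there, but rather exploit the normalization $\int_{\R^n}\mathcal{P}_m(x-y,t)\,\ud y = 1$ for every $t>0$ (which follows from $\int \mathcal{P}_m(x,1)\,\ud x=1$ together with the natural parabolic rescaling of $\mathcal{P}_m$). This gives the trivial strong bound $\|v\|_{L^\infty(\Rh)}\le \|f\|_{L^\infty(\R^n)}$. Marcinkiewicz interpolation between the weak $(1,(n+1)/n)$ endpoint and the strong $(\infty,\infty)$ endpoint then produces the strong bound $\|v\|_{L^{(n+1)q/n}(\Rh)}\le c(n,m,q)\|f\|_{L^q(\R^n)}$ for every $1<q\le\infty$. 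I do not anticipate any serious obstacle; the one subtle point is that the pointwise majorant $Tf$ is useful only at the $L^1$ endpoint (at $L^\infty$ it diverges because $\int|X-(y,0)|^{-n}\,\ud y$ is infinite), so the two endpoints genuinely have to be handled by different routes.
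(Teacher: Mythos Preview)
Your proposal is correct and shares the same skeleton as the paper's proof: Marcinkiewicz interpolation between the trivial $L^\infty\to L^\infty$ bound (coming from the normalization $\int_{\R^n}\mathcal{P}_m(\cdot,t)=1$) and a weak-type $(1,\tfrac{n+1}{n})$ estimate. The only difference lies in how the weak endpoint is obtained. The paper exploits the pointwise bound $\mathcal{P}_m(x-y,t)\le\beta(n,m)\,t^{-n}$, which confines the superlevel set $\{|v|>\lda\}$ to the slab $\{0<t<\beta^{1/n}\lda^{-1/n}\}$; since Fubini gives $\int_{\{0<t<a\}}|\mathcal{P}_m*f|\,\ud X\le a\,\|f\|_{L^1}$, Chebyshev on that slab yields the weak bound in one line. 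Your route via the Riesz-type majorant $\mathcal{P}_m(x-y,t)\le\beta(n,m)\,|X-(y,0)|^{-n}$ together with the rearrangement estimate $\int_E|X-(y,0)|^{-n}\,\ud X\le C|E|^{1/(n+1)}$ is equally valid and closer in spirit to classical fractional-integral arguments; it uses nothing about $\mathcal{P}_m$ beyond the domination $t\le|X-(y,0)|$, so it would apply verbatim to any nonnegative kernel bounded by $|X-Y|^{-n}$. The paper's slab trick is a touch more self-contained in that it avoids the implicit a~priori finiteness of $|E|$ hidden in your Chebyshev step $\lda\,|E|\le C|E|^{1/(n+1)}\|f\|_{L^1}$, which you would of course patch by intersecting with $B_R^+$ and letting $R\to\infty$.
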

\begin{proof}
The proof by now is standard. When $m=1$, see Hang-Wang-Yan \cite{HWY}.  When $q=\infty$, it is easy to show. By the Marcinkiewicz interpolation theorem, it thus suffices to show the $q=1$ case. Without loss of generality, we may assume that $\|f\|_{L^1(\R^n)}=1$. First, note that for any $t>0$ there holds
\begin{align*}
|\mathcal{P}_m*f(x,t)|\le \beta(n,m)t^{-n}.
\end{align*}
In addition,  for any number $a>0$,
\begin{align*}
&\int_{\R^{n+1}_+\cap\{0<t<a\}}|\mathcal{P}_m*f(x,t)|\,\ud x \ud t\\\leq& \int_{\R^n}|f(y)|\ud y\int_0^a \int_{\R^n}\frac{\beta(n,m)t^{2m-1} }{(|x-y|^2+t^2)^{\frac{n+2m-1}{2}}}\,\ud x\ud t = a.
\end{align*}
It follows that for any $\lda>0$
\begin{align*}
&\left|\{(x,t):|\mathcal{P}_m*f(x,t)|>\lambda\}\right|\\=&\left|\{(x,t): 0<t<\beta(n,m)^{\frac{1}{n}}\lambda^{-\frac{1}{n}},|\mathcal{P}_m*f(x,t)|>\lambda\}\right|\\
\leq& \frac{1}{\lambda}\int_{\R^{n+1}_+\cap\{0<t<\beta(n,m)^{\frac{1}{n}} \lambda^{-\frac{1}{n}}\}}|\mathcal{P}_m*f|\, \ud x \ud t\\ \leq& \beta(n,m)^{\frac{1}{n}}\lambda^{-\frac{n+1}{n}}.
\end{align*}
Therefore, we complete the proof.
\end{proof}
\begin{lem}\label{lem:Poisson-ext} Suppose that $f$ is a smooth function in $L^q(\R^n)$ for some $q\ge 1$. Then $v$ defined by \eqref{eq:Poisson-ext} is smooth and satisfies \eqref{eq:DVP}.

\end{lem}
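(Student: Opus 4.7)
The plan is to establish the four separate properties of $v$---smoothness, the polyharmonic equation $\Delta^m v=0$ in $\Rh$, the vanishing higher-Neumann conditions $\pa_t\Delta^k v(\cdot,0)=0$ for $0\le k\le m-2$, and the Dirichlet datum $v(\cdot,0)=f$---each by transferring the corresponding pointwise property from the kernel $\mathcal{P}_m$ to its convolution $\mathcal{P}_m*f$.

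For smoothness inside $\Rh$ the route is differentiation under the integral sign. Using the homogeneity $\mathcal{P}_m(\lambda X)=\lambda^{-n}\mathcal{P}_m(X)$ one sees that for $(x_0,t_0)$ in a compact subset of $\Rh$ and any multi-index $\alpha$, the function $y\mapsto\pa_X^\alpha\mathcal{P}_m(x_0-y,t_0)$ is, uniformly on a neighborhood of $(x_0,t_0)$, bounded by a smooth rapidly decaying function of $y$ that lies in $L^{q'}(\R^n)$ for every $q\ge 1$. Dominated convergence then yields $v\in C^\infty(\Rh)$. Smoothness up to $\dRh$, once the boundary identities are in hand, follows from the classical boundary regularity for the Agmon--Douglis--Nirenberg elliptic system $(\Delta^m;\,v|_0,\pa_t\Delta^0 v|_0,\ldots,\pa_t\Delta^{m-2}v|_0)$ applied with the smooth data $(f,0,\ldots,0)$ (see \cite{ADN}, \cite{GGS}).

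The identity $\Delta^m v=0$ in $\Rh$ reduces, again via differentiation under the integral, to the pointwise identity $\Delta^m_X\mathcal{P}_m(x-y,t)=0$ for all $t>0$ and $y\in\R^n$, and by translation invariance to $\Delta^m\mathcal{P}_m\equiv 0$ in $\Rh$. I would verify this by direct computation with the explicit Riesz-type kernel: write $\mathcal{P}_m(x,t)=\beta(n,m)t^{2m-1}\Phi(X)$ with $\Phi(X)=(|x|^2+t^2)^{-(n+2m-1)/2}$, use that $|X|^{2-N}$ with $N=n+2m-1$ is harmonic on $\R^N\setminus\{0\}$, and track the combinatorics of $(\Delta_x+\pa_t^2)^m$ acting on $t^{2m-1}\Phi$ by induction on $m$ (or, equivalently, cite the polyharmonic Poisson representation from \cite{GGS}). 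I expect this to be the main technical step.

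For the higher-Neumann conditions, using the decomposition $\mathcal{P}_m(x,t)=\beta(n,m)t^{2m-1}\Phi(X)$ above, with $\Phi$ smooth and \emph{even} in $t$, an induction in $k$ with $\Delta=\Delta_x+\pa_t^2$ shows $\Delta^k\mathcal{P}_m(x,t)=t^{2m-1-2k}[c_k\Phi(x,0)+O(t^2)]$ where the remainder expansion proceeds in even powers of $t$; hence $\pa_t\Delta^k\mathcal{P}_m(x,0)=0$ whenever $2m-2-2k\ge 2$, i.e.\ $k\le m-2$, and differentiating under the integral transfers this pointwise vanishing to $v$. Finally, the Dirichlet datum follows from the substitution $y=x+tz$ in \eqref{eq:Poisson-ext}, which, using the $(-n)$-homogeneity of $\mathcal{P}_m$, gives $v(x,t)=\int_{\R^n}\Psi(z)f(x+tz)\,\ud z$ with $\Psi(z)=\beta(n,m)(1+|z|^2)^{-(n+2m-1)/2}$ and $\int\Psi=1$ by the very normalization of $\mathcal{P}_m$; a standard approximation-of-identity argument (split into $|z|<R$ and $|z|>R$; control the inner piece by continuity of $f$ and the outer by the H\"older bound $\|\Psi\|_{L^{q'}(|z|>R)}\|f\|_{L^q}\to 0$) then yields $v(x,t)\to f(x)$ as $t\to 0^+$, completing the verification.
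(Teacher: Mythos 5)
The main gap is in your verification of the Neumann conditions $\pa_t\Delta^k v(\cdot,0)=0$ for $k\le m-2$. You establish the pointwise vanishing $\pa_t\Delta^k\mathcal{P}_m(x-y,0)=0$ for $y\ne x$ and then assert that ``differentiating under the integral transfers this pointwise vanishing to $v$.'' That transfer is not justified: $\pa_t\Delta^k\mathcal{P}_m(\cdot,t)$ is homogeneous of degree $-n-2k-1$, so $\int_{\R^n}|\pa_t\Delta^k\mathcal{P}_m(x-y,t)|\,\ud y\sim t^{-2k-1}\to\infty$ as $t\to0$, and the contribution from the region $|y-x|\lesssim t$ is not controlled by the behavior of the kernel off the diagonal; the limit exists only because of a cancellation there. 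The paper supplies exactly this cancellation: it splits $f$ with a cutoff $\eta_{x_0}$, substitutes $y=x-tz$ in the near part so that all derivatives fall on $f$, and observes that the limiting integrand $\Psi(z)\,z^\alpha D^\alpha(f\eta_{x_0})(x)$ carries an odd multi-index $|\alpha|=2j+1$, so the $z$-integral vanishes by symmetry of $\Psi$. You already use precisely this substitution for the Dirichlet datum; the same device is what is needed for the Neumann conditions, but as written your proposal omits it and the step would fail.

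Two smaller points. First, for the identity $\Delta^m\mathcal{P}_m=0$ you propose an inductive combinatorial computation and flag it as the main technical step; the paper disposes of it in one line by noting that $\mathcal{P}_m(x-y,t)$ is the $m$-Kelvin transform of $\beta(n,m)t^{2m-1}$ centered at $(y,0)$, so \eqref{eq:k-invariant} reduces the claim to $\Delta^m t^{2m-1}=0$. (Your auxiliary remark that $|X|^{2-N}$ with $N=n+2m-1$ is harmonic lives in the wrong ambient dimension: here $X\in\R^{n+1}$, where $|X|^{-(n+2m-1)}$ is not harmonic.) Second, in the Dirichlet-datum step the tail estimate $\|\Psi\|_{L^{q'}(|z|>R)}\|f\|_{L^q}$ omits the Jacobian of the substitution: $\|f(x+t\cdot)\|_{L^q(\R^n)}=t^{-n/q}\|f\|_{L^q(\R^n)}$, so with $R$ fixed the bound does not tend to zero; one must let $R$ depend on $t$, or (as in the paper) cut off $f$ in the $y$-variable so that the far part is trivially $O(t^{2m-1})$.
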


\begin{proof} The smoothness of $v(x,t)$ is easy and we omit the details. Note that $\mathcal{P}_m(x-y,t)$ is the Kelvin transform of $\beta(n,m)t^{2m-1}$ with respect to $X_0=(y,0)$ and $\lda=1$. It follows that $\Delta^m_{x,t} \mathcal{P}_m(x-y,t)=\beta(n,m)|X-X_0|^{-(n+2m-1)} \Delta^m_{x,t} t^{2m-1}=0$ for any $x\in \R^n$ and $t>0$. Therefore, $v$ satisfies the first equation of \eqref{eq:DVP}.

Next, let $\eta\ge 0$ be a cutoff function satisfying $\eta=1$ in $D_{1/2}$ and $\eta=0$ in $\R^n\setminus D_2$, and denote $\eta_{x_0}(x)=\eta(x-x_0)$ for any $x_0\in \R^n$. Let $v_1=\mathcal{P}_m * (f\eta_{x_0})$ and $v_2= \mathcal{P}_m * (f(1-\eta_{x_0}))$, then $v=v_1+v_2$. Clearly, \[
\lim_{(x,t)\to (x_0,0)}v_2(x,t)\to 0.
\]
By the change of variables $x-y=tz$, we see that
\[
v_1(x,t)= \beta(n,m)\int_{\R^n}\frac{(f\eta_{x_0})(x-tz)}{(|z|^2+1)^{\frac{n+2m-1}{2}}}\,\ud z.
\]
Sending  $t\to 0$, by Lebesgue dominated convergence theorem we obtain
\[
v_1(x,t)\to f(x_0) \quad \text{when } (x,t)\to (x_0,0).
\]Hence, by the arbitrary choice of $x_0$, we verified the second line of \eqref{eq:DVP}.

Finally, for any $0\le k\le m-2$, note that $\Delta^k t^{2m-1}=(2m-1)\cdots (2m-2k)t^{2m-1-2k}$ with $2m-1-2k\ge 2$. It follows that
\[
\lim_{(x,t)\to (x_0,0)}\pa_t \Delta^k v_2(x,t)= 0.
\]
Making use of $k\le m-2$ and Lebesgue dominated convergence theorem, we see that  as $t\to 0$,
\begin{align*}
&\pa_t\Delta ^k v_1(x,t)\\&= \beta(n,m)\int_{\R^n}\frac{\pa_t\sum_{j=0}^kC(j)\Delta^{k-j}_x\partial_t^{2j}(f\eta_{x_0})(x-tz)}{(|z|^2+1)^{\frac{n+2m-1}{2}}}\,\ud z\\&
=\beta(n,m)\int_{\R^n}\frac{\sum_{j=0}^kC(j)\Delta^{k-j}_x\sum_{|\alpha|=2j+1}C(\alpha)D^\alpha_x(f\eta_{x_0})(x-tz)z_1^{\alpha_1}\cdots z_{n}^{\alpha_n}}{(|z|^2+1)^{\frac{n+2m-1}{2}}}\,\ud z\\&
\to - \beta(n,m)\int_{\R^n}\frac{\sum_{j=0}^{k}C(j)\sum_{|\alpha|=2j+1}C(\alpha)\Delta_x^{k-j}D_x^\alpha(f\eta_{x_0})(x)z_1^{\alpha_1}\cdots z_{n}^{\alpha_n}}{(|z|^2+1)^{\frac{n+2m-1}{2}}}\,\ud z=0
\end{align*}
where $C(j)$ and $C(\alpha)$ are some binomial constants and  we used the oddness of the integrand in the last equality.

Therefore, we complete the proof.
\end{proof}

\begin{rem} If $f\in L^1(\R^n)$ is smooth in an open set $\om\in \R^n$ for instance. From the proof of Lemma \ref{lem:Poisson-ext}, we see that $v$  will satisfy boundary conditions of \eqref{eq:DVP} on $\om$ pointwisely.

\end{rem}

Next lemma shows the convolution with $\mathcal{P}_m$ commutes with $m-$Kelvin transformation.

\begin{lem}\label{lem:commute}
Suppose $f_{x_0,\lda}(x):=|x|^{2m-1-n}f(x_0+\frac{\lda(x-x_0)}{|x-x_0|^2})\in L^1(\R^n)$ for some $x_0\in \R^n$ and $\lda>0$. Let $X_0=(x_0,0)$.  Then $v_{X_0,\lda}=\mathcal{P}_m*f_{x_0,\lda}$.
\end{lem}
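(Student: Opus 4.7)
The plan is to prove the identity by a direct change-of-variables computation, exploiting the conformal distance identity
\[
|\phi(X)-\phi(Z)|^{2}=\frac{\lambda^{4}|X-Z|^{2}}{|X-X_{0}|^{2}\,|Z-X_{0}|^{2}},
\]
where $\phi(X):=X_{0}+\lambda^{2}(X-X_{0})/|X-X_{0}|^{2}$ is the inversion appearing in the definition of the $m$-Kelvin transform. Since $X_{0}=(x_{0},0)\in\dRh$, the map $\phi$ preserves $\Rh$, and its restriction to $\dRh$ is precisely the boundary inversion sending $z$ to $w=x_{0}+\lambda^{2}(z-x_{0})/|z-x_{0}|^{2}$, with Jacobian $\lambda^{2n}/|w-x_{0}|^{2n}$.

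First I would write out $v(\phi(X))$ using the integral representation \eqref{eq:Poisson-ext}: setting $\phi(X)=(y^{*},s)$ one has $s=\lambda^{2}t/(|x-x_{0}|^{2}+t^{2})$, and the conformal identity applied to $Z=(z,0)$, $W=(w,0)$ gives
\[
|y^{*}-z|^{2}+s^{2}=|\phi(X)-\phi(W)|^{2}=\frac{\lambda^{4}(|x-w|^{2}+t^{2})}{(|x-x_{0}|^{2}+t^{2})\,|w-x_{0}|^{2}}.
\]
Substituting these two formulas into the Poisson kernel $s^{2m-1}/(|y^{*}-z|^{2}+s^{2})^{(n+2m-1)/2}$ and making the change of variables $z\mapsto w$, the integrand factors into $t^{2m-1}/(|x-w|^{2}+t^{2})^{(n+2m-1)/2}$ times a product of explicit powers of $|x-x_{0}|^{2}+t^{2}$, $|w-x_{0}|$ and $\lambda$.

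Next, I would multiply by the Kelvin prefactor $(\lambda/|X-X_{0}|)^{n-2m+1}=\lambda^{n-2m+1}(|x-x_{0}|^{2}+t^{2})^{-(n-2m+1)/2}$ to assemble $v_{X_{0},\lambda}(X)$. The powers of $|X-X_{0}|$ generated by the kernel transformation cancel this prefactor exactly, the Jacobian cancels the $\lambda^{-2n}$ from the kernel, and the leftover $|w-x_{0}|^{n+2m-1-2n}=|w-x_{0}|^{-(n-2m+1)}$ combines with the remaining $\lambda^{n-2m+1}$ to form $(\lambda/|w-x_{0}|)^{n-2m+1}$. What is left is precisely $\beta(n,m)\int_{\R^{n}}\frac{t^{2m-1}}{(|x-w|^{2}+t^{2})^{(n+2m-1)/2}}f_{x_{0},\lambda}(w)\,dw=\mathcal{P}_{m}*f_{x_{0},\lambda}(X)$.

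The main obstacle I foresee is the careful bookkeeping of the several competing powers of $\lambda$, $|x-x_{0}|^{2}+t^{2}$ and $|w-x_{0}|$; although every factor is forced by the scaling of the kernel and the Kelvin transform, a sign or exponent slip is easy to make. The hypothesis $f_{x_{0},\lambda}\in L^{1}(\R^{n})$ together with Lemma \ref{lem:Poisson} ensures that $\mathcal{P}_{m}*f_{x_{0},\lambda}$ is a well-defined locally integrable function and that the change of variables (equivalently, Fubini) in the computation above is legitimate. An alternative — verifying that $v_{X_{0},\lambda}$ solves the Dirichlet problem \eqref{eq:DVP} with boundary datum $f_{x_{0},\lambda}$ via the conformal invariance \eqref{eq:k-invariant} and concluding by uniqueness — would require an additional growth hypothesis to pin down the kernel of \eqref{eq:DVP}, so I prefer the direct identification above.
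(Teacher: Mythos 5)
Your proposal is correct and follows essentially the same route as the paper: the paper's proof is exactly this direct change-of-variables computation, hinging on the same conformal distance identity (stated there in the normalized case $x_0=0$, $\lambda=1$, with the remark that the general case is similar). The power bookkeeping you outline does close up as claimed.
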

\begin{proof} We only verify the case $x_0=0$ and $\lda=1$, because the other situations are similar. Since $f_{0,1}\in L^1(\R^n)$, $\mathcal{P}_m*f_{0,1}$ is well-defined.
 By direct computations,
\begin{align*}
v_{0,1}(X)&=|X|^{2m-1-n}\mathcal{P}_m*f\left(\frac{x}{|X|^2},\frac{t}{|X|^2}\right)\\
&=\beta(n,m)|X|^{2m-1-n}\int_{\R^n}\frac{(t/|X|^2)^{2m-1}f(y)}{(|x/|X|^2-y|^2+(t/|X|^2)^2)^{\frac{n+2m-1}{2}}}\,\ud y\\
&=\beta(n,m)|X|^{-2m+1-n}\int_{\R^n}\frac{t^{2m-1}f(y)}{(|x/|X|^2-y|^2+(t/|X|^2)^2)^{\frac{n+2m-1}{2}}}\,\ud y\\
&=\beta(n,m)\int_{\R^n}\frac{t^{2m-1}|y|^{1-2m-n}f(y)}{(t^2+|y/|y|^2-x|^2)^{\frac{n+2m-1}{2}}}\,\ud y\\
&=\beta(n,m)\int_{\R^n}\frac{t^{2m-1}|z|^{2m-1-n}f(\frac{z}{|z|^2})}{(t^2+|z-x|^2)^{\frac{n+2m-1}{2}}}\,\ud z=\mathcal{P}_m*f_{0,1}(X),
\end{align*}
where in the fourth step we used the elementary equality
\[|X|^2\left(\left(\frac{t}{|X|^2}\right)^2+\left|\frac{x}{|X|^2}-y\right|^2\right)=|y|^2\left(t^2+\left|\frac{y}{|y|^2}-x\right|^2\right).\]
\end{proof}
\begin{rem}\label{rem:2.5}
Actually the proof holds whenever $\mathcal{P}_m*f_{x_0,\lambda}$ is well defined, for example $f_{x_0,\lambda}\in L^1_{loc}(\R^n)$ and bounded at infinity.
\end{rem}
\begin{lem}\label{lem:de} Let $v\in C^{2m}(\R^{n+1}_+\cup \pa\R_+^{n+1})$ be a solution of \eqref{eq:DVP}. Then for any $X_0=(x_0,0)$ and $\lda>0$, $v_{X_0,\lda}$ satisfies \eqref{eq:DVP}  with $f$ replaced by $f_{x_0,\lda}$, except the the boundary point $X_0$.

\end{lem}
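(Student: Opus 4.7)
The approach is to verify, clause by clause, that $v_{X_0,\lambda}$ satisfies \eqref{eq:DVP} away from $X_0$, with $f$ replaced by $f_{x_0,\lambda}$.

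The polyharmonic equation follows at once from the conformal covariance \eqref{eq:k-invariant}. Because $X_0=(x_0,0)\in\partial\R^{n+1}_+$, the inversion $\phi(X):=X_0+\lambda^2(X-X_0)/|X-X_0|^2$ preserves the sign of the last coordinate and maps $\R^{n+1}_+\setminus\{X_0\}$ to itself; hence $\Delta^m v_{X_0,\lambda}(X)=(\lambda/|X-X_0|)^{n+2m-1}\Delta^m v(\phi(X))=0$ on $\R^{n+1}_+$. Restricting the Kelvin formula to $t=0$ and using $v(\cdot,0)=f$ gives $v_{X_0,\lambda}(x,0)=f_{x_0,\lambda}(x)$ for $x\neq x_0$.

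The substantive part is the chain of conditions $\partial_t\Delta^k v_{X_0,\lambda}(x,0)=0$ for $k=0,\dots,m-2$. Writing $\Delta=\partial_t^2+\Delta_x$ gives the purely algebraic identity
\[
\partial_t\Delta^k=\sum_{j=0}^k\binom{k}{j}\Delta_x^{k-j}\partial_t^{2j+1},
\]
from which one reads off, by induction on $k$, that the hypotheses on $v$ are equivalent to the pointwise vanishing of odd $t$-derivatives
\[
\partial_t^{2j+1}v(x,0)=0,\qquad j=0,1,\dots,m-2.
\]
I plan to use this equivalence in both directions. From the above vanishing, the even reflection $\tilde v(x,t):=v(x,|t|)$ is of class $C^{2m-2}$ on $\R^{n+1}$, is even in $t$, and satisfies $\Delta^m\tilde v=0$ on $\R^{n+1}\setminus\R^n$.

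Since $X_0\in\partial\R^{n+1}_+$, the inversion $\phi$ commutes with the reflection $(x,t)\mapsto(x,-t)$, and $|X-X_0|$ is invariant under this reflection. Consequently the $m$-Kelvin transform $\tilde v_{X_0,\lambda}$ is itself even in $t$, is of class $C^{2m-2}$ on $\R^{n+1}\setminus\{X_0\}$, and agrees with $v_{X_0,\lambda}$ on $\R^{n+1}_+$. Evenness and this regularity force $\partial_t^{2j+1}\tilde v_{X_0,\lambda}(x,0)=0$ for $j=0,\dots,m-2$ at every $x\neq x_0$; reading the algebraic equivalence in reverse delivers $\partial_t\Delta^k v_{X_0,\lambda}(x,0)=0$ for $k=0,\dots,m-2$, finishing the verification.

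The main subtlety is that the stated boundary conditions only control odd $t$-derivatives up to order $2m-3$; the derivative $\partial_t^{2m-1}v(x,0)$ is not assumed to vanish (in the nonlinear problem it carries the right-hand side). This is why the even reflection produces only $C^{2m-2}$ regularity, but that is precisely what is needed for the odd $t$-derivatives of $\tilde v_{X_0,\lambda}$ of order $\leq 2m-3$ to be well defined and to vanish on $\R^n\setminus\{x_0\}$ by evenness.
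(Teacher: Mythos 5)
Your proposal is correct. The paper itself gives no details here (its proof reads only ``It follows from direct computations''), so there is nothing to match line by line; your even-reflection argument is a clean and legitimate way to carry out that computation. The two nontrivial points are exactly the ones you isolate: (a) the triangular identity $\pa_t\Delta^k=\sum_{j=0}^k\binom{k}{j}\Delta_x^{k-j}\pa_t^{2j+1}$ shows the Neumann-type conditions for $k\le m-2$ are equivalent to the vanishing of $\pa_t^{2j+1}v(\cdot,0)$ for $j\le m-2$, and (b) since $X_0$ lies on $\pa\R^{n+1}_+$, the inversion commutes with the reflection $t\mapsto -t$, so the Kelvin transform of the even extension is again even and of class $C^{2m-2}$ away from $X_0$, which is precisely enough regularity to kill the odd $t$-derivatives up to order $2m-3$ and hence to run the equivalence in reverse. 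This buys you the boundary conditions without ever differentiating the Kelvin formula directly, which would be considerably messier; a brute-force verification would have to track how $\Delta^k$ interacts with the conformal factor for $k<m$, where no simple covariance holds.
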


\begin{proof}
It follows from direct computations.
\end{proof}

\subsection{Neumann function for a Neumann problem}

Now, we consider
\begin{align} \label{eq:NVP}
\begin{cases}
\Delta^m v(x,t)=0&\text{in }\R^{n+1}_+,\\
\pa_t\Delta^{k}v(x,0)=0&\text{on }\pa\,\R^{n+1}_+,\\
(-1)^m\pa_t \Delta^{m-1} v(x,0)=f(x)&\text{on }\pa\,\R^{n+1}_+,
\end{cases}
\end{align}
where $f$ is a smooth function belonging to $L^{q}(\R^n)$ for some $q\ge 1$, and $k=0,\dots, m-2$. Let
\[
\mathcal{N}_m(x,t)= \gamma(n,m)\frac{1}{(|x|^2+t^2)^{\frac{n-2m+1}{2}}},
\]
where $\gamma(n,m)=\pi^{\frac{n+1}{2}}\Gamma(\frac{n-2m+1}{2})/\Gamma(m)$. Define
\be \label{eq:neumann-ext}
v(x,t):= \mathcal{N}_m * f(x,t) =\gamma(n,m)\int_{\R^n}\frac{f(y)}{(t^2+|x-y|^2)^{\frac{n-2m+1}{2}}}\,\ud y.
\ee

\begin{lem}\label{lem:V} If $f\in L^1(\R^{n})$, then
$v(x,t)$ belongs to weak$-L^{\frac{n+1}{n-2m+1}}(\R_+^{n+1})$. Moreover,
\[\left|\left\{(x,t):|v(x,t)|> \lambda\right\}\right|\leq C(n,m)\lambda^{-\frac{n+1}{n-2m+1}}||f||_{L^1(\R^n)}^{\frac{n+1}{n-2m+1}}\quad \text{for every } \lambda>0,\]
where $C(n,m)>0$ is a constant depending only $n$ and $m$.
\end{lem}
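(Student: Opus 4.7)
The plan is to mimic the classical weak-type estimate for Riesz potentials, because $v=\mathcal{N}_m*f$ is, up to a constant, the Riesz potential of order $2m$ applied to the finite measure $f(y)\otimes\delta_{\{t=0\}}$ regarded as a measure on $\R^{n+1}$. After rescaling $f$ by $\|f\|_{L^1(\R^n)}^{-1}$ and $\lda$ accordingly, I may assume $\|f\|_{L^1(\R^n)}=1$.

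The heart of the argument is an inner/outer truncation. For a parameter $R>0$ to be chosen later, split
\[
v(X)=v^{\rm in}_R(X)+v^{\rm out}_R(X)
:=\gamma(n,m)\int_{|X-(y,0)|<R}\frac{f(y)\,\ud y}{|X-(y,0)|^{n-2m+1}}
+\gamma(n,m)\int_{|X-(y,0)|\ge R}\frac{f(y)\,\ud y}{|X-(y,0)|^{n-2m+1}}.
\]
On the far piece the kernel is bounded by $R^{-(n-2m+1)}$, giving the uniform estimate $\|v^{\rm out}_R\|_{L^\infty(\Rh)}\le \gamma(n,m)R^{-(n-2m+1)}$. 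I then fix $R=R(\lda)$ so that this quantity equals $\lda/2$, namely $R=(2\gamma(n,m)/\lda)^{1/(n-2m+1)}$, which forces $\{|v|>\lda\}\subset\{|v^{\rm in}_R|>\lda/2\}$. Chebyshev combined with Fubini then yields
\[
|\{|v|>\lda\}|\le \frac{2}{\lda}\int_{\Rh}|v^{\rm in}_R(X)|\,\ud X
\le \frac{2\gamma(n,m)}{\lda}\int_{\R^n}|f(y)|\int_{B_R(0)\subset\R^{n+1}}\frac{\ud Z}{|Z|^{n-2m+1}}\,\ud y.
\]
The inner geometric integral is computed in polar coordinates to equal $|S^n|R^{2m}/(2m)$; the exponent $2m-1>-1$ is precisely what guarantees integrability at the origin. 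Substituting the choice of $R$ back in produces $|\{|v|>\lda\}|\le C(n,m)\lda^{-(n+1)/(n-2m+1)}$, and undoing the $L^1$ normalization gives the asserted bound.

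There is no genuine obstacle. The only reason the proof is not completely parallel to that of Lemma \ref{lem:Poisson} is that $\mathcal{N}_m(\cdot,t)$ is \emph{not} in $L^1(\R^n)$, since the exponent $n-2m+1<n$ gives only local integrability of the kernel; this is what rules out the slab/Marcinkiewicz argument used for $\mathcal{P}_m$ and forces the Riesz-potential-style truncation above. Once this is noted, the rest is a routine application of Fubini and Chebyshev.
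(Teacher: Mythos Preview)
Your proof is correct and follows essentially the same truncation strategy as the paper: split the kernel into a near part (controlled in $L^1$ via Fubini) and a far part (controlled in $L^\infty$), choose the cutoff radius so the far piece is below $\lambda/2$, and finish with Chebyshev. The only cosmetic difference is that the paper truncates according to $|x-y|\le r$ in $\R^n$ while you truncate by $|X-(y,0)|<R$ in $\R^{n+1}$; your choice makes the $L^1$ estimate for the near piece slightly cleaner, since the inner integral is genuinely over a ball in $\R^{n+1}$.
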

\begin{proof}
 The lemma was proved by Dou-Zhu \cite{DZ} and we include a proof below for completeness and convenience of the readers.

 After scaling, assume $\int_{\R^n}f(y)dy=1$. Split $v$ as
\begin{align*}
v(x,t)&=\gamma(n,m)\left(\int_{\R^n\cap\{|x-y|\leq r\} }+\int_{\R^n\cap \{|x-y|>r\}}\right)\frac{f(y)}{(t^2+|x-y|^2)^\frac{n-2m+1}{2}}\, \ud y\\& =:v_1(x,t)+v_2(x,t),
\end{align*}
where $r$ will be fixed later. By direct computations, we have
\begin{align*}
||v_1||_{L^1(\R_+^{n+1})}&=\gamma(n,m) \int_{\R_+^{n+1}}\int_{\R^n\cap \{|x-y|\leq r\}}\frac{|f(y)|}{(t^2+|x-y|^2)^\frac{n-2m+1}{2}}\ud y\,\ud X\\
&\leq \gamma(n,m) \int_{\R^n }|f(y)|\, \ud y\int_{\R_+^{n+1}\cap B_r}\frac{1}{|X|^{n-2m+1}}\, \ud X\leq C_1r^{2m},
\end{align*}
and
\begin{align*}
|v_2|\leq C_2r^{2m-n-1},
\end{align*}
where $C_1,C_2$ are constants depending only $n$ and $m$.
Observing the inequality
\begin{align*}
\left|\left\{(x,t):|v|\geq 2\lambda\right\}\right|\leq \left|\left\{(x,t):|v_1|\geq \lambda\right\}\right|+\left|\left\{(x,t):|v_2|\geq \lambda\right\}\right|,
\end{align*}
one can choose $r$ as $C_2r^{2m-n-1}=\lambda$, then
$\left|\left\{(x,t):|v_2|\geq \lambda\right\}\right|=0$. Thus
\begin{align*}
\left|\left\{(x,t):|v|\geq 2\lambda\right\}\right|&\leq \left|\left\{(x,t):|v_1|\geq \lambda\right\}\right|\leq C\frac{1}{\lambda}||v_1||_{L^1(\R_+^{n+1})}\\&\leq C\frac{r^{2m}}{\lambda}=C\lambda^{-\frac{n+1}{n-2m+1}}.
\end{align*}
By scaling, we complete the proof of the lemma.
\end{proof}

We refer to Dou-Zhu \cite{DZ} for strong type bounds for the convolution operator involving the Neumann function.

\begin{lem}\label{lem:Poisson-neu} Suppose that $f$ is a smooth function  belonging to $L^{q}(\R^n)$ for some $q\ge 1$. Then $v$ defined by \eqref{eq:neumann-ext} is smooth and satisfies \eqref{eq:NVP}.
 \end{lem}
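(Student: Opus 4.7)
The plan is to verify, in order, (i) smoothness of $v$ on $\R^{n+1}_+$, (ii) the polyharmonic equation $\Delta^m v = 0$ in the interior, and (iii) the three boundary conditions at $t=0$. Smoothness and the interior equation are essentially routine: on any compact subset of $\R^{n+1}_+$ the kernel $\mathcal{N}_m(x-y,t)$ and its $(x,t)$-derivatives are controlled pointwise by integrable dominants against $f\in L^q$, so differentiation under the integral sign is legitimate. The function $\mathcal{N}_m(X) = \gamma(n,m) |X|^{-(n-2m+1)}$ is, up to a multiplicative constant, the $m$-Kelvin transform at the origin of the constant function $1$; hence by the conformal invariance \eqref{eq:k-invariant} we get $\Delta^m \mathcal{N}_m = 0$ on $\R^{n+1}\setminus\{0\}$, and translation gives $\Delta^m_X \mathcal{N}_m(X-Y) = 0$ for $X \ne Y$. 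Passing $\Delta^m$ under the integral yields $\Delta^m v = 0$ in $\R^{n+1}_+$.

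For the boundary conditions I would imitate the localization argument from the proof of Lemma~\ref{lem:Poisson-ext}. Fix $x_0 \in \R^n$ and the cutoff $\eta_{x_0}$ from that proof, and split $v = v_1 + v_2$ with $v_1 = \mathcal{N}_m * (f\eta_{x_0})$ and $v_2 = \mathcal{N}_m * (f(1-\eta_{x_0}))$. On a neighborhood of $(x_0,0)$ the kernel underlying $v_2$ is smooth with all derivatives bounded uniformly, so differentiation under the integral commutes with passage to $t=0$. Iterating the radial identity $\Delta |X|^s = s(s+n-1)|X|^{s-2}$ on $\R^{n+1}$ shows that for each $0 \le k \le m-1$, $\Delta^k \mathcal{N}_m$ is a nonzero multiple of $|X|^{-(n-2m+1+2k)}$, in particular an even function of $t$. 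Hence $\partial_t \Delta^k \mathcal{N}_m|_{t=0} \equiv 0$, and $\partial_t \Delta^k v_2(x_0, 0) = 0$ for every such $k$.

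The substantive part is the behavior of $\partial_t \Delta^k v_1$ as $t\to 0^+$. By the identity above it is a constant multiple of
\[
\int_{\R^n} \frac{t\, (f\eta_{x_0})(y)}{(|x-y|^2+t^2)^{(n-2m+3+2k)/2}}\, \ud y.
\]
For $k \le m-2$ the exponent $(n-2m+3+2k)/2$ is at most $(n-1)/2$, so the integrand is dominated uniformly in small $t$ by $t\,|x-y|^{-(n-1)}|f\eta_{x_0}|(y)$ near $y=x$ and by a bounded function elsewhere; dominated convergence produces the limit $0$, which together with the $v_2$ computation confirms $\partial_t \Delta^k v(x_0, 0) = 0$ for $0 \le k \le m-2$. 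When $k = m-1$ the exponent equals $(n+1)/2$, and the kernel $t/(|x-y|^2+t^2)^{(n+1)/2}$ is precisely the (unnormalized) upper half-space Poisson kernel, acting as an approximate identity as $t \to 0^+$. This gives $\partial_t \Delta^{m-1} v_1(x_0, t) \to C(n,m)\, f(x_0)$ for an explicit constant $C(n,m)$, and the normalizing constant $\gamma(n,m)$ in the definition of $\mathcal{N}_m$ is chosen exactly so that $(-1)^m C(n,m) = 1$, which is the remaining boundary condition. The main obstacle is really the careful tracking of constants; the nontrivial mechanism---that a single one-sided $t$-derivative at the boundary produces a Dirac mass---is delivered by the Poisson-kernel limit applied at $k=m-1$.
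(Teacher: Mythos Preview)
Your proof is correct and essentially matches the paper's: both iterate the radial identity $\Delta |X|^s = s(s+n-1)|X|^{s-2}$ to compute $\partial_t \Delta^k \mathcal{N}_m$, observe that the cases $k \le m-2$ vanish at $t=0$, and recognize that $k=m-1$ produces the standard Poisson kernel $\mathcal{P}_1$, so the last boundary condition reduces to the limit $\mathcal{P}_1 * f \to f$. The paper skips the $k\le m-2$ conditions as ``easy'' and does not write out the localization $v=v_1+v_2$ (leaning implicitly on Lemma~\ref{lem:Poisson-ext} for the Poisson limit), whereas you spell these out; this is a difference in level of detail, not in method.
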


 \begin{proof}  The smoothness and the first two lines of  \eqref{eq:NVP} are easy to show. For the last boundary condition, observe that
 \[\Delta^k|X|^{2m-n-1}=(2m-n-1)\cdots (2m-n+1-2k)(2m-2)\cdots(2m-2k)|X|^{2m-n-1-2k}\]
for any $k\geq 1$. It follows that
\begin{align*}
\pa_t \Delta^{m-1}v(x,t)
=&(2m-n-1)\cdots(1-n)(2m-2)\cdots 2\gamma(n,m)\int_{\R^n}\frac{tf(y)}{(|x-y|^2+t^2)^{\frac{n+1}{2}}}\ud y\\
=&(-1)^m2^{2m}\frac{\Gamma(\frac{n+1}{2})}{\Gamma(\frac{n-2m+1}{2})}\Gamma(m)\gamma(n,m)\int_{\R^n}\frac{tf(y)}{(|x-y|^2+t^2)^{\frac{n+1}{2}}}\ud y
\end{align*}
therefore
\[\pa_t \Delta^{m-1}v(x,0)=(-1)^mf(x).\]
This verifies the last boundary condition.

Therefore, we complete the proof.
 \end{proof}
\begin{lem}\label{lem:neuman_conv}
Suppose $f_{x_0,\lda}(x):=|x|^{2m-1-n}f(x_0+\frac{\lda(x-x_0)}{|x-x_0|^2})\in L^1(\R^n)$ for some $x_0\in \R^n$ and $\lda>0$. Let $X_0=(x_0,0)$.  Then $v_{X_0,\lda}=\mathcal{N}_m*(|x|^{-2(2m-1)}f_{x_0,\lda})$.
\end{lem}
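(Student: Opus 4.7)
My plan is to proceed by direct computation, essentially parallel to the proof of Lemma \ref{lem:commute} for the Poisson kernel. First, by the translation invariance of $\Delta^m$ and $\mathcal{N}_m$, and by the dilation scaling of both sides, I reduce to the case $x_0=0$, $\lambda=1$. With this normalization I must verify
\[
v_{0,1}(X) \;=\; |X|^{2m-1-n}\,v\!\left(\frac{x}{|X|^2},\frac{t}{|X|^2}\right) \;=\; \mathcal{N}_m * \bigl(|\cdot|^{-2(2m-1)} f_{0,1}\bigr)(X).
\]

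The algebraic core is the same conformal identity already exploited in Lemma \ref{lem:commute},
\[
|X|^2\left(\left|\frac{x}{|X|^2}-y\right|^2+\left(\frac{t}{|X|^2}\right)^2\right) \;=\; |y|^2\left(t^2+\left|\frac{y}{|y|^2}-x\right|^2\right),
\]
together with the spherical inversion $z=y/|y|^2$, under which $dy = |z|^{-2n}\,dz$. Substituting the definition of $v=\mathcal{N}_m*f$, raising the identity above to the power $(n-2m+1)/2$ and pulling out the factor $|X|^{n-2m+1}$ exactly cancels the prefactor $|X|^{2m-1-n}$ coming from the $m$-Kelvin transform, leaving an integral whose integrand contains $|y|^{2m-1-n}f(y)$. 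Performing the inversion and using $f(z/|z|^2)=|z|^{n-2m+1}f_{0,1}(z)$, the three powers of $|z|$ combine as
\[
|z|^{-2n}\cdot|z|^{n-2m+1}\cdot|z|^{n-2m+1} \;=\; |z|^{-2(2m-1)},
\]
which is precisely the weight appearing in the claimed formula.

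The main difference from Lemma \ref{lem:commute} — and the only thing worth watching — is that $\mathcal{N}_m$ carries no $t^{2m-1}$ prefactor, so the exponent bookkeeping produces a surplus weight $|z|^{-2(2m-1)}$ rather than a clean cancellation. This is a routine exercise and I do not anticipate any genuine obstacle; it is pure conformal covariance of the Neumann kernel. The integrability assumption $f_{0,1}\in L^1(\mathbb{R}^n)$ is just what is needed for all intermediate integrals to converge absolutely and Fubini to apply, and as in Remark \ref{rem:2.5} the identity continues to hold whenever the right-hand convolution is well-defined (for example if $|x|^{-2(2m-1)}f_{x_0,\lambda}\in L^1_{loc}(\mathbb{R}^n)$ with suitable decay at infinity).
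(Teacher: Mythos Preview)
Your proof is correct and follows precisely the approach the paper intends: the paper's own proof simply reads ``It is similar to the proof of Lemma~\ref{lem:commute}, thus we omit the details,'' together with the analogue of Remark~\ref{rem:2.5}. Your computation fills in those omitted details accurately, including the key observation that the absence of the $t^{2m-1}$ factor in $\mathcal{N}_m$ leaves the residual weight $|z|^{-2(2m-1)}$.
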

\begin{proof} It is similar to the proof of Lemma \ref{lem:commute}, thus we omit the details. Same as the remark \ref{rem:2.5}, the proof holds whenever $\mathcal{N}_m*(|x|^{-2(2m-1)}f_{x_0,\lda})$ is well defined.
\end{proof}

\begin{lem}\label{lem:Kelvin-Neumann} Let $v\in C^{2m}(\R^{n+1}_+\cup \pa\R^{n+1}_+)$ be a solution of \eqref{eq:NVP}. Then for any $X_0=(x_0,0)$ and $\lda>0$, $v_{X_0,\lda}$ satisfies \eqref{eq:NVP}  with $f(x)$ replaced by $|x|^{-2(2m-1)}f_{x_0,\lda}(x)$, except the the boundary point $X_0$.

\end{lem}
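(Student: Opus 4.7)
The plan is a direct computation, paralleling the proof of Lemma~\ref{lem:de}. First, the interior equation $\Delta^m v_{X_0,\lda}=0$ in $\R^{n+1}_+\setminus\{X_0\}$ is immediate from the $m$-Kelvin invariance \eqref{eq:k-invariant}: since $X_0\in\pa\R^{n+1}_+$, the inversion $Y = X_0 + \lda^2(X-X_0)/|X-X_0|^2$ preserves $\R^{n+1}_+$, so for $X\in\R^{n+1}_+\setminus\{X_0\}$ the image $Y$ lies in $\R^{n+1}_+$ where $\Delta^m v = 0$.

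For the boundary conditions, I would reduce to $X_0 = 0$, $\lda = 1$ by translation and rescaling (both of which preserve the structure of \eqref{eq:NVP}), writing $v^*(X) = |X|^{2m-n-1} v(X/|X|^2)$. Using the chain-rule identities $\sum_i \pa_{X_i}Y_j\,\pa_{X_i}Y_k = \delta_{jk}/|X|^4$ and $\Delta_X Y_j = 2(1-n)X_j/|X|^4$ for the inversion $Y = X/|X|^2$, together with $\Delta_X|X|^a = a(a+n-1)|X|^{a-2}$, one derives inductively on $j$ an expansion of the form
\begin{align*}
\Delta^j v^*(X) = |X|^{2m-n-1-2j}\sum_{|\alpha|\le 2j}A_{j,\alpha}(X)\,(D^\alpha v)(X/|X|^2), \quad 0\le j\le m,
\end{align*}
with coefficients $A_{j,\alpha}$ smooth away from $X = 0$.

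Restricting to $t = 0$ and applying $\pa_t$, the key observation is that for the inversion the tangential components of $\pa_t Y_j$ vanish at $t = 0$ while the normal component equals $|x|^{-2}$; hence $\pa_t|_{t=0}$ corresponds to $|x|^{-2}\pa_s|_{s=0}$, and each normal-derivative contribution to $\pa_t\Delta^k v^*(x,0)$ is proportional to $\pa_s\Delta^\ell v(x/|x|^2,0)$ for some $\ell\le k$. For $k\le m-2$ these vanish by the boundary conditions on $v$, and the tangential contributions in the expansion can be checked to cancel as well, giving $\pa_t\Delta^k v^*(x,0) = 0$. For $k = m-1$ the surviving top-order term is a constant multiple of $|x|^{-(n+2m-1)}\pa_s\Delta^{m-1}v(x/|x|^2,0)$, so that
\begin{align*}
(-1)^m\pa_t\Delta^{m-1}v^*(x,0) = |x|^{-(n+2m-1)} f(x/|x|^2) = |x|^{-2(2m-1)} f_{0,1}(x),
\end{align*}
which is the claim (the multiplicative constant is pinned down to $1$ by checking the $m = 1$ case directly, which then propagates through the induction).

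The main obstacle is the combinatorial bookkeeping: verifying the inductive form of $\Delta^j v^*$ and confirming that every tangential boundary term either cancels or reduces to one of the hypotheses $\pa_s\Delta^\ell v(\cdot,0) = 0$. This is tedious but elementary, which is why the authors merely state that the result follows from direct computations. One could alternatively invoke the conformal covariance of the boundary GJMS-type operator $(-1)^m\pa_t\Delta^{m-1}$ of weight $(n-2m+1)/2$, but the direct route above is self-contained and avoids any appeal to general covariance theory.
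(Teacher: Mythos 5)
Your outline is correct and matches the paper, whose proof of this lemma is literally ``it follows from direct computations'': the interior equation via \eqref{eq:k-invariant}, the reduction of $\pa_t|_{t=0}$ to $|x|^{-2}\pa_s|_{s=0}$ because the tangential components of $\pa_t Y$ vanish on $\{t=0\}$, and the weight count giving $|x|^{-(n+2m-1)}f(x/|x|^2)=|x|^{-2(2m-1)}f_{0,1}(x)$ are exactly the ingredients of that computation. As a small remark, the normalizing constant is pinned to $1$ more cleanly by testing against $v=\mathcal{N}_m*f$ and invoking Lemmas \ref{lem:Poisson-neu} and \ref{lem:neuman_conv}, rather than by propagating the $m=1$ case through the induction.
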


\begin{proof}
It follows from direct computations.
\end{proof}
\section{Polyharmonic functions with homogeneous boundary data}

\subsection{Extensions of Liouville theorem}

It is well-known that every nonnegative  solution of
\[
\begin{cases}
\Delta u(x,t)=0& \quad \mbox{in }\R^{n+1}_+,\\
u=0&\quad \mbox{on }\pa \R^{n+1}_+,
\end{cases}
\]
has to equal $a t$ for some $a\ge 0$. A simple proof of this result is based on the boundary Harnack inequality. In this subsection, we extend this result to polyharmonic functions with homogeneous boundary conditions, for which we don't have a boundary Harnack inequality.

\begin{prop}\label{prop:Ch-sing}
Let $ u \in C^{2m}(\R_+^{n+1}\cup \pa \R_+^{n+1} )$ be a solution of
\be \label{eq:ch5}
\begin{cases}
\Delta^{m} u(x,t)=0 \quad &\text{in }\R^{n+1}_+,\\
u(x,0)=0&\text{on }\pa\,\R^{n+1}_+,\\
\pa_t\Delta^{k}u(x,0)=0 \quad &\text{on }\pa\,\R^{n+1}_+, \quad k=0,1,\cdots, m-2.
\end{cases}
\ee
Suppose that ${u^*}(X)\in L^1(B_1^+)$, where $
u^*:=u_{0,1}
$ is the $m-$Kelvin transform of $u$ with respect to $X_0=0$ and $\lda=1$.  Then
\be \label{eq:poly}
u(x,t)=\sum_{k=1}^{m-1} t^{2k}P_{2k}(x)+c_{0}t^{2m-1},
\ee
where $P_{2k}(x)$ are polynomials w.\,r.\,t. $x$ of degree $\le 2m-1-2k$.
%if  we assume $u\geq 0$, then $c_0\geq 0$, $P_{2(m-1)}\equiv const\geq 0$.

In addition if we assume ${u^*}(X)\geq g(X)$ for some $g\in L^{\frac{n+1}{n}}(B_1^+)$, then $c_0\geq 0$,  and $\text{deg}\,P_{2k}\leq 2m-2-2k$. In particular $P_{2(m-1)}$ must be a constant.
\end{prop}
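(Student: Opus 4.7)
The approach is via the $m$-Kelvin transform and a singular expansion at the origin. Setting $u^*:=u_{0,1}$, the conformal invariance \eqref{eq:k-invariant} together with the same calculation as in Lemma \ref{lem:de} shows that $u^*$ satisfies the homogeneous system \eqref{eq:ch5} on $\R^{n+1}_+\setminus\{0\}$, with vanishing boundary data on $\pa\R^{n+1}_+\setminus\{0\}$. The hypothesis $u^*\in L^1(B_1^+)$ thereby transforms the growth of $u$ at infinity into a local integrability condition near the single boundary singular point.

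The heart of the argument is the local decomposition
\[
u^*(X) = c\,\mathcal{P}_m(X) + w(X) \qquad \text{in } B_1^+,
\]
with $w\in C^{2m}(\overline{B_1^+})$ a classical solution of \eqref{eq:ch5}. The intuition is that any polyharmonic solution of \eqref{eq:ch5} with an isolated singularity at the boundary point $0$ must correspond to a distribution supported at $\{0\}\subset\pa\R^{n+1}_+$ and so be representable as a finite combination $\sum_{|\alpha|\leq N}c_\alpha\,\pa_x^\alpha\mathcal{P}_m(x,t)$. A polar-coordinate calculation shows that $\mathcal{P}_m\in L^1(B_1^+)$ while $\pa_x^\alpha\mathcal{P}_m\notin L^1(B_1^+)$ for every $|\alpha|\geq 1$, so the hypothesis kills every term with $|\alpha|\geq 1$ and leaves only $c\,\mathcal{P}_m$. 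I would make this rigorous either via a B\^ocher-type removable singularity theorem tailored to the polyharmonic Dirichlet-Navier problem on the half-ball, or via separation of variables on the half-sphere $\pa^+B_1^+$ with the induced boundary conditions on its equator $\pa D_1$.

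Once the decomposition is established, the inverse $m$-Kelvin transform gives $u(X)=c_0\,t^{2m-1}+w_{0,1}(X)$, using that $\mathcal{P}_m$ is, up to a constant, the Kelvin transform of $t^{2m-1}$ as noted in the proof of Lemma \ref{lem:Poisson-ext}. Taylor expanding the smooth $w$ at the origin and enforcing the boundary conditions term by term forces $w(X)=\sum_{k=1}^{m-1}t^{2k}q_k(x)+O(|X|^{2m-1})$; Kelvin-inverting each homogeneous piece and absorbing the higher-order remainders into $c_0 t^{2m-1}$ produces the required $\sum_{k=1}^{m-1}t^{2k}P_{2k}(x)$ with $\deg P_{2k}\leq 2m-1-2k$. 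For the refinement under $u^*\geq g\in L^{(n+1)/n}(B_1^+)$, the pointwise nonnegativity of $\mathcal{P}_m$ and the weak-$L^{(n+1)/n}$ bound in Lemma \ref{lem:Poisson} force $c\geq 0$, hence $c_0\geq 0$, and rule out the top-degree monomials in each $P_{2k}$ (whose Kelvin images carry singularities $|X|^{-s}$ with $s\geq n$, incompatible with an $L^{(n+1)/n}$ lower bound), reducing $\deg P_{2k}$ to $2m-2-2k$ and in particular forcing $P_{2(m-1)}$ to be constant. The principal obstacle I anticipate is the singular decomposition step: converting the bare $L^1$ hypothesis into the assertion that only the $\alpha=0$ derivative of $\mathcal{P}_m$ is allowed requires a B\^ocher-type theorem for this mixed boundary value problem, which is the main technical engine of the proof.
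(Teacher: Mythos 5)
Your central decomposition $u^*=c\,\mathcal{P}_m+w$ with $w\in C^{2m}(\overline{B_1^+})$ is false, and the argument collapses with it. The heuristic that an isolated boundary singularity of \eqref{eq:ch5} must be a finite combination $\sum_\alpha c_\alpha\,\pa_x^\alpha\mathcal{P}_m$ is wrong for this mixed boundary value problem: the functions $t^{2k}x^\gamma|X|^{2m-1-n-4k-2|\gamma|}$ (the $m$-Kelvin transforms of the monomials $t^{2k}x^\gamma$) are polyharmonic, satisfy all the boundary conditions of \eqref{eq:ch5} away from the origin, belong to $L^1(B_1^+)$ whenever $2k+|\gamma|\le 2m-1$, and are genuinely singular at $0$ without being tangential derivatives of $\mathcal{P}_m$ (every $\pa_x^\alpha\mathcal{P}_m$ carries the factor $t^{2m-1}$, these carry $t^{2k}$). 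A concrete counterexample is $u=t^2$ for $m\ge 2$: it solves \eqref{eq:ch5}, its Kelvin transform $u^*=t^2|X|^{2m-5-n}$ lies in $L^1(B_1^+)$, yet it is not $c\,\mathcal{P}_m$ plus a function smooth up to the origin. Moreover, even granting your decomposition, the step that is supposed to produce the polynomial part cannot work: the Kelvin inverse of a Taylor monomial of $w$ of degree $j\ge 0$ is homogeneous of degree $2m-1-n-j<0$, so $w_{0,1}$ decays at infinity and your scheme would wrongly conclude $u=c_0t^{2m-1}$, i.e.\ that all the $P_{2k}$ vanish.

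The paper's proof is more elementary and needs no B\^ocher-type theorem at this stage. It rescales: $v(X)=u^*(rX)$ solves the same homogeneous system on an annulus, so elliptic estimates give $\|v\|_{L^\infty(B_{5/4}^+\setminus B_{3/4}^+)}\le C\|v\|_{L^1(B_{3/2}^+\setminus B_{1/2}^+)}=o(r^{-(n+1)})$ by the $L^1$ hypothesis; hence $u^*(X)=o(|X|^{-(n+1)})$ near $0$, equivalently $u(X)=o(|X|^{2m})$ at infinity. Scaled derivative estimates then force $\nabla^{2m}u\equiv 0$, so $u$ is a polynomial of degree at most $2m-1$, and the boundary conditions select exactly the monomials appearing in \eqref{eq:poly}. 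Your idea for the refinement under $u^*\ge g$ (the top-degree, odd, homogeneous parts have Kelvin images homogeneous of degree $-n$, which fail to be in $L^{\frac{n+1}{n}}$ on the cone where they are negative) is essentially the paper's argument, but it must be run on the polynomial obtained above rather than inside your decomposition.
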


\begin{proof}
For any $r>0$,  let $v(X)={u^*}(rX)$. Then $v(X)$ satisfies \eqref{eq:ch5} pointwisely except the origin. By the standard estimates for solutions of linear elliptic PDEs, we have
\be\label{eq:17}
\|v\|_{L^{\infty}(B_{5/4}^+\setminus B_{3/4}^+ )}\le C(m,n)\|v\|_{L^1(B_{3/2}^+\setminus B_{1/2}^+)}.
\ee
See \cite{ADN} or Theorem 2.20 of \cite{GGS} precisely.
Notice that
\[
\|v\|_{L^1 (B_{3/2}^+\setminus B_{1/2}^+)}=\frac{1}{r^{n+1}}\|{u^*}\|_{L^1(B^+_{3r/2}\setminus B^+_{r/2})}=o(r^{-(n+1)})\quad \text{ as }r\to 0.
\]
Together with \eqref{eq:17}, the above inequality yields
\[
|{u^*}(X)|=o(|X|^{-(n+1)})\quad \text{ as }|X|\to 0.
\]
Since $u(X)=|X|^{2m-1-n}{u^*}\left({X}/{|X|^2}\right)$, we obtain
\be \label{eq:asymp}
|u(X)|=o(|X|^{2m})\quad \text{ as }|X|\to \infty.
\ee
For every $R>0$, by the standard estimates for solutions of linear elliptic PDEs we obtain
\[
\|\nabla^{2m} u\|_{L^{\infty}(B_{R}^+)}\le CR^{-2m}\|u\|_{L^\infty(B_{2R}^+)},
\]
where $C>0$ is independent of $R$. Sending $R\to \infty$ and making use of \eqref{eq:asymp} we have
\[
\nabla^{2m} u\equiv 0 \quad \mbox{in }\R^{n+1}_+.
\]
It follows that $u$ is a polynomial of degree at most $2m-1$. Sorting $u$ by the degree of $t$, one can have
\[u(x,t)=\sum_{l=0}^{2m-2}t^{l}P_{l}(x)+c_0t^{2m-1}\]
where $P_l(x)$ is a polynomial of $x$ with degree $\leq 2m-1-l$.
The boundary conditions of $u$ imply $P_l\equiv 0$ when $l\leq 2m-2$ and is odd. Indeed, suppose the contrary and let $P_{l_0}\neq 0$ of the least odd order ${l_0}$.  Set $k_0=(l_0-1)/2\leq m-2$ which is an integer. Then
\[u(x,t)=\sum_{k=1}^{k_0}t^{2k}P_{2k}(x)+t^{l_0}P_{l_0}(x)+\sum_{l=l_0}^{2m-2}t^lP_l(x).\]
Applying $\pa_t\Delta^{k_0}$ to $u$, then $\pa_t\Delta^{k_0}(t^{2k}P_{2k}(x))(x,0)=0$ and $\pa_t\Delta^{k_0}(t^lP_{l}(x))(x,0)=0$ for
any $l> l_0$. Since $\pa_t\Delta^{k_0} u(x,0)=0$,
\[0=\pa_t\Delta^{k_0}(t^{l_0}P_{l_0}(x))(x,0)=l_0 !P_{l_0}(x).\]
Hence, we proved the claim. It follows that
\[u(x,t)=\sum_{k=1}^{m-1} t^{2k}P_{2k}(x)+c_{0}t^{2m-1}.\]

If ${u^*}\geq g$ for some $g$ as stated in the theorem.
For any polynomial $P$ with  $\text{deg}\,P<2m-1-2k$, we have
\begin{align}\label{eq:lowerfact}
(t^{2k}P(x))^*=|X|^{2m-1-n}\left(\frac{t}{|X|^2}\right)^{2k}P\left(\frac{x}{|X|^2}\right)=O(|X|^{1-n}) \text{ as }|X|\to 0\end{align}
which means $(t^{2k}P(x))^*\in L^{\frac{n+1}{n}}(B_1^+)$. Absorbing all these lower order terms of $P_{2k}$ to $g$ and collecting all the leading terms of each $P_{2k}$  to be $\tilde u$, we have
\begin{align*}
\tilde u^*=&\sum_{k=1}^{m-1}|X|^{2m-1-n}\left(\frac{t}{|X|^2}\right)^{2k}\tilde{P}_{2k}\left(\frac{x}{|X|^2}\right)+c_0|X|^{2m-1-n}\left(\frac{t}{|X|^2}\right)^{2m-1}\geq \tilde g
\end{align*}
where $\tilde{P}_{2k}$ are homogeneous polynomial in $x$ with degree equals to $2m-1-2k$ or $\tilde{P}_{2k}\equiv 0$. By the homogeneity,
\begin{align*}
\tilde u^*=&|X|^{1-2m-n}t^{2m-1}\left(\sum_{k=1}^{m-1}\tilde P_{2k}\left(\frac{x}{t}\right)+c_0\right).
\end{align*}

Note that $\tilde{P}_{2k}$ is a homogeneous polynomial of odd degree and thus $\tilde{P}_{2k}(-y)=-\tilde{P}_{2k}(y)$. Therefore if some $\tilde{P}_{2k}$ is not zero, then $\sum_{k=1}^{m-1} \tilde P_{2k}(y)+c_0$ will be negative on some open set $A\subset \R^n$ with measure $|A|=\infty$. This leads to $\tilde u^*<0$ on set $A^+=\{(x,t)\in B_1^+|x/t\in A\}$ with $|A^+|>0$.
While on this set, $\tilde u^*\not\in L^{\frac{n+1}{n}}$, which will violate the fact $\tilde u\geq \tilde g$ with $\tilde g\in  L^{\frac{n+1}{n}}(B_1^+)$. Indeed, take a bounded subset $E$ of $A$ with $|E|>0$, notice when $t_0>0$ small enough, we have $\{(tx,t):x\in E,0<t<t_0\}\subset A^+$, then
\begin{align*}
\int\int_{A^+}|\tilde u|^\frac{n+1}{n}\ud x\ud t\geq&\int_0^{t_0}\int_{tE}|\tilde u^*|^{\frac{n+1}{n}}\ud x\ud t\\
=&\int_0^{t_0}\int_{tE}\left[|X|^{1-2m-n}t^{2m-1}\left|\sum_{k=1}^m \tilde P_{2k}(x/t)+c_0\right|\right]^{\frac{n+1}{n}}\ud x\ud t\\
=&\int_0^{t_0}t^{-1}\int_{E}\left[(|y|^2+1)^{\frac{1-2m-n}{2}}\left|\sum_{k=1}^m\tilde P_{2k}(y)+c_0\right|\right]^{\frac{n+1}{n}}\ud y\ud t\\
>&c\int_0^{t_0}t^{-1}\ud t=\infty\text{ for some }c>0,
\end{align*}
where we have changed variable $x=ty$.
Therefore, $\tilde P_{2k}\equiv 0$ for $1\leq k\leq m-1$ and $c_0\geq 0$.

We complete the proof of the proposition.
\end{proof}

\begin{thm}\label{thm:positive_homo}
Let  $0\le  u \in C^{2m}(\R_+^{n+1}\cup \pa \R_+^{n+1} )$ be a solution of \eqref{eq:ch5}. Then
\be \label{eq:poly-1}
u(x,t)=\sum_{k=1}^{m-1} t^{2k}P_{2k}(x)+c_{0}t^{2m-1},
\ee
where $P_{2k}(x)$ are polynomials w.\,r.\,t. $x$ of degree $\le 2m-2-2k$, and $c_0\ge 0$.

\end{thm}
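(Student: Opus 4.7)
The plan is to reduce Theorem~\ref{thm:positive_homo} directly to Proposition~\ref{prop:Ch-sing}. Since $u\ge 0$ implies $u^*\ge 0$, the zero function $g\equiv 0$ satisfies $g\in L^{(n+1)/n}(B_1^+)$ and $u^*\ge g$ trivially; hence the ``in addition'' clause of Proposition~\ref{prop:Ch-sing} applies as soon as its first hypothesis $u^*\in L^1(B_1^+)$ is verified. That clause then delivers both the representation \eqref{eq:poly-1} with $\deg P_{2k}\le 2m-2-2k$ and the inequality $c_0\ge 0$, which is exactly the conclusion sought. Consequently the whole task reduces to showing $u^*\in L^1(B_1^+)$, whose only possible obstruction is the singularity of $u^*$ at the origin.

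Via the Kelvin change of variable $Y=X/|X|^2$ (with Jacobian $|Y|^{-2(n+1)}$), this integrability is equivalent to a growth-at-infinity condition on $u$:
\[
\int_{\R^{n+1}_+\setminus B_1}u(Y)\,|Y|^{-(n+2m+1)}\,dY<\infty .
\]
It therefore suffices to prove the polynomial spherical-growth bound $\int_{\partial^+B_R^+}u\,dS\le CR^{n+2m-1}$ for $R\ge 1$, since the remaining radial integral $\int_1^\infty R^{-2}\,dR$ is convergent.

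To produce this growth bound I would apply the Green's identity displayed at the end of Section~1 on the half-ball $B_R^+$ with the test function $\phi(X)=t^{2m-1}$. A direct computation gives $\Delta^j(t^{2m-1})=\frac{(2m-1)!}{(2m-1-2j)!}t^{2m-1-2j}$, so in particular $\Delta^m\phi=0$, and $\phi$ satisfies precisely the same homogeneous boundary conditions as $u$ on $D_R$, namely $\phi(x,0)=0$ and $\partial_t\Delta^k\phi(x,0)=0$ for $0\le k\le m-2$. This matching kills every $D_R$ term in Green's identity, leaving an identity supported on $\partial^+B_R^+$ alone. Supplementing $\phi$ with the auxiliary polyharmonic test functions $\phi_j(X)=t^{2j-1}$ for $j=1,\dots,m-1$ (each producing a single explicit $D_R$ remainder proportional to $\int_{D_R}\Delta^{m-j}u(x,0)\,dx$) yields a system of $m$ integral relations coupling the spherical means $\int_{\partial^+B_R^+}t^{2i-1}\Delta^{i-1}u\,dS$ and their radial derivatives. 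The positivity $u\ge 0$, combined with a Taylor expansion near $\{t=0\}$ (which forces $\partial_t^2u(x,0)\ge 0$ from $u=\partial_t u=0$ on the boundary), then converts this linear system into a one-sided ODE-type bound on $M(R):=\int_{\partial^+B_R^+}u\,dS$, yielding the required polynomial growth.

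The central obstacle is converting these Green identities into a useful one-sided bound, because the Laplacians $\Delta^{i-1}u$ appearing in the boundary integrals are not individually signed for $i\ge 2$. Resolving this likely requires combining the identities with interior Pizzetti-type mean value formulas on balls $B_{t/2}((x,t))\subset\R^{n+1}_+$ (where the boundary plays no role) and, if needed, an induction on $m$, with base case $m=1$ being the classical statement that a nonnegative harmonic function on $\R^{n+1}_+$ vanishing on $\partial\R^{n+1}_+$ must equal $c_0\,t$.
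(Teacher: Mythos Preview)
Your reduction to showing $u^*\in L^1(B_1^+)$ is exactly right and matches the paper. But your route to proving this---translating to a spherical-growth bound on $u$ at infinity and applying Green's identity on $B_R^+$ with test functions $t^{2j-1}$---runs aground precisely where you say it does: the boundary integrals on $\partial^+B_R^+$ involve $\Delta^{i-1}u$ for $i\ge 2$, which carry no sign, and your Pizzetti/induction sketch does not actually close this gap. Note also that your test functions satisfy $\Delta^m(t^{2j-1})=0$, so each Green identity is a \emph{homogeneous} relation among unsigned quantities; it never isolates $\int u$ on one side.

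The paper sidesteps the difficulty by two simple choices. First, it stays with $u^*$ on the \emph{fixed} half-ball $B_1^+$: since $u^*$ is smooth on $\partial^+B_1^+$ (this corresponds to $u$ at finite points), every $\partial^+B_1^+$ integral is bounded by a fixed constant---there is no growing $R$ to fight. Second, it uses the test function
\[
\eta_\varepsilon(t)=\frac{1}{(2m)!}\,(t-\varepsilon)_+^{2m}\in C^{2m-1,1},
\]
which (a) vanishes, along with all derivatives up to order $2m-1$, on the slab $\{0\le t\le\varepsilon\}$, hence in a neighborhood of the singular point $0\in D_1$, and (b) satisfies $\Delta^m\eta_\varepsilon=\chi_{\{t>\varepsilon\}}$. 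Green's identity on $B_1^+$ then reads
\[
\int_{B_1^+\cap\{t>\varepsilon\}}u^*\,dX \;\le\; C,
\]
with $C$ depending only on the (smooth) data of $u^*$ on $\partial^+B_1^+$. Monotone convergence as $\varepsilon\to 0$ gives $u^*\in L^1(B_1^+)$.

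The two insights you are missing are: (i) work on a fixed domain where the outer boundary contributions are automatically bounded, and (ii) choose a test function with $\Delta^m=1$ rather than $0$, so that Green's identity directly produces the $L^1$ norm instead of a relation you must then disentangle.
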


\begin{proof} By Proposition \ref{prop:Ch-sing}, it suffices to show $u^*\in L^1(B_1^+)$. Note that  $u^*$ satisfies \eqref{eq:ch5} except the origin. Define
\[
\eta_\va(t)=\begin{cases}
\frac{1}{2m!}(t-\va)^{2m}& \quad \mbox{for } t\ge \va,\\
0 & \quad \mbox{for }t<\va.
\end{cases}
\]
Since ${u^*}$ is smooth in on $\pa^+B_1^+$ and $\eta(t)\in C^{2m-1,1}$, multiplying both sides of the polyharmonic equation of ${u^*}$ and using Green's identity we have
\[
\int_{B_1^+\cap \{t>\va\}} {u^*}(X)\,dX \le C,
\]
where $C$ is independent of $\va$. Sending $\va\to 0$ and using ${u^*}\ge 0$, by Lebesgue's monotone convergence theorem we have ${u^*}\in L^1(B_1^+)$.

Therefore, we complete the proof.
\end{proof}
\subsection{Extensions of B\^ocher theorem}
In this subsection, we will give some extensions of the classical B\^ocher theorem which says that every nonnegative harmonic function in the punctured unit ball is decomposed to the fundamental solution multiplied by a constant plus a harmonic function cross the origin. Let
\[
\Phi(X)=c(m,n)\begin{cases}
|X|^{2m-n-1} &\quad \mbox{if }2m<n+1,\\
\ln |X|& \quad \mbox{if }2m=n+1,
\end{cases}
\]
be the fundamental solution of $(-\Delta)^m$,
where $c(m,n)$ is a normalization constant such that $(-\Delta)^{m} \Phi(X)=\delta_0$.

\begin{thm}\label{thm:whole-a} Let $u\in C^{2m}( B_1\setminus \{0\})$ be a solution of $(-\Delta) ^{2m} u=0$ in $B_1\setminus \{0\}\subset \R^{n+1}$.
Suppose $u\in L^1(B_1)$, then
\[
u(X)= h(X)+\sum_{|\al|\le 2m-1}c_\al D^\al \Phi(X) \quad \mbox{in }B_1,
\]
where $\al=(\al_1,\dots, \al_{n+1})\in \mathbb{N}^{n+1}$ is multi-index, $c_{\al}$ are constants, and $h$ is a smooth solution of $(-\Delta) ^{2m} h=0$ in $B_1$.
If in addition assume $u\ge g$ for some $g$ belonging to weak-$L^{\frac{n+1}{n-1}}(B_1)$, then $c_\al=0$ for $|\alpha|=2m-1$.

\end{thm}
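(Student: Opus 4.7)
The plan is to work distributionally. Since $u\in L^1(B_1)$ defines a distribution on $B_1$, one may consider $T:=(-\Delta)^m u$ in the distributional sense. For any $\phi\in C_c^\infty(B_1\setminus\{0\})$, classical integration by parts on the support of $\phi$ (where $u$ is $C^{2m}$ and polyharmonic) yields $\langle T,\phi\rangle=\int_{B_1}u\,(-\Delta)^m\phi\,\ud X=0$, so $T$ is supported at $\{0\}$. The classical structure theorem for distributions supported at a single point then gives a finite expansion
\[
T \;=\; \sum_{|\al|\le K_0} c_\al\,D^\al\delta_0
\]
for some $K_0\in\mathbb{N}$. Setting $v:=\sum_{|\al|\le K_0}c_\al D^\al\Phi$ on $\R^{n+1}$, the identity $(-\Delta)^m\Phi=\delta_0$ gives $(-\Delta)^m v=T$ distributionally, hence $h:=u-v$ satisfies $(-\Delta)^m h=0$ on $B_1$ as a distribution. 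Weyl's lemma for $(-\Delta)^m$ then shows $h$ is smooth on $B_1$.

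The core of the proof is to bound $K_0\le 2m-1$. Since $h$ is smooth, $v=u-h$ must be represented by an $L^1_{\mathrm{loc}}$ function near the origin. I would decompose $v$ by order as $v=\sum_j V_j$ with $V_j=\sum_{|\al|=j}c_\al D^\al\Phi$, and observe that $V_j$ is homogeneous of degree $2m-n-1-j$: for $j\le 2m-1$ it is a classical $L^1_{\mathrm{loc}}$ function, whereas for $j\ge 2m$ its leading homogeneous part has degree $\le -n-1$ and hence fails to be locally integrable, and for $j=2m$ one additionally picks up a $\delta_0$ contribution. Linear independence of $\{D^\al\Phi\}_{|\al|=j}$ as distributions follows from the Fourier identity $\widehat{D^\al\Phi}(\xi)=c(i\xi)^\al|\xi|^{-2m}$, since any relation $\sum_{|\al|=j}c_\al D^\al\Phi=0$ reduces to the polynomial identity $\sum c_\al\xi^\al\equiv 0$. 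A homogeneity/scaling comparison $\int_{B_r}|V_{K_0}|\,\ud X\sim r^{2m-K_0}$ versus $\int_{B_r}|V_j|\,\ud X\lesssim r^{2m-j}$ for $j<K_0$ then shows the leading singularity cannot be absorbed by lower-order terms. Hence $K_0\le 2m-1$, yielding the decomposition $u=h+\sum_{|\al|\le 2m-1}c_\al D^\al\Phi$.

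For the final assertion, suppose $u\ge g$ with $g$ in weak-$L^{(n+1)/(n-1)}$, and split $v=v_{\le 2m-2}+v_{2m-1}$ by order. The lower-order piece satisfies $|v_{\le 2m-2}(X)|\le C|X|^{1-n}$, and the function $|X|^{1-n}$ has super-level sets of measure $\sim\lambda^{-(n+1)/(n-1)}$, so $v_{\le 2m-2}$ lies in weak-$L^{(n+1)/(n-1)}$ locally. Since $h$ is smooth, the inequality $v_{2m-1}\ge g-h-v_{\le 2m-2}$ forces the negative part of $v_{2m-1}$ into weak-$L^{(n+1)/(n-1)}$. But $\Phi$ is even in $X$, so every $D^\al\Phi$ with $|\al|=2m-1$ is odd; thus $v_{2m-1}(-X)=-v_{2m-1}(X)$, and its positive and negative parts are equidistributed. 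This places $|v_{2m-1}|$ in weak-$L^{(n+1)/(n-1)}$ locally. However $v_{2m-1}$ is homogeneous of degree $-n$, so its super-level sets have measure $\sim\lambda^{-(n+1)/n}$, which only gives weak-$L^{(n+1)/n}$ membership. Since $(n+1)/(n-1)>(n+1)/n$, the space weak-$L^{(n+1)/(n-1)}$ is strictly smaller locally, forcing $v_{2m-1}\equiv 0$, i.e. $c_\al=0$ for all $|\al|=2m-1$.

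The hardest point will be the non-cancellation argument in the third paragraph: for $K_0\ge 2m$ one must simultaneously control the $\delta_0$ components of the various $D^\al\Phi$ for $|\al|=2m$ and verify via the Fourier transform that no nontrivial linear combination of the classical homogeneous remainders (of degree $\le -n-1$) can vanish. Once this quantitative non-cancellation is in place, the decomposition and the final parity/scaling dichotomy are routine consequences of elliptic regularity and homogeneity.
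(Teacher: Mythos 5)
Your proof of the second assertion is essentially the paper's: the paper argues in one line that for $|\al|=2m-1$ the function $D^\al\Phi$ is odd and homogeneous of degree $-n$, hence has a negative part comparable to $|X|^{-n}$ on a cone, and $|X|^{-n}$ fails to lie in weak-$L^{\frac{n+1}{n-1}}(B_1)$ since its superlevel sets have measure $\sim\lambda^{-\frac{n+1}{n}}$; your version merely spells out the equidistribution of positive and negative parts. For the first assertion the paper simply cites Futamura--Kishi--Mizuta, so your distributional argument is a genuinely different, self-contained route, and its skeleton is correct: $T:=(-\Delta)^m u$ is supported at the origin, hence a finite combination of $D^\al\delta_0$; subtracting $\sum c_\al D^\al\Phi$ and invoking hypoellipticity of $(-\Delta)^m$ produces the smooth part $h$. (You have silently read the equation as $(-\Delta)^m u=0$ rather than the printed $(-\Delta)^{2m}u=0$; since $\Phi$ is the fundamental solution of $(-\Delta)^m$ and Corollary \ref{cor:Bocher} applies the theorem to $m$-polyharmonic functions, that is the intended reading.) A useful shortcut: the order bound $K_0\le 2m$ is immediate from $|\langle T,\phi\rangle|=|\int_{B_1}u\,(-\Delta)^m\phi|\le \|u\|_{L^1}\|(-\Delta)^m\phi\|_{L^\infty}$, so only the coefficients with $|\al|=2m$ require a separate argument, and these can be killed by testing against $X^\beta\psi(X/\va)$ with $|\beta|=2m$ and using the absolute continuity of $\int_{B_{C\va}}|u|$.

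The one step in your outline that would fail as written is precisely the "non-cancellation" you flag as the hardest point. It is \emph{false} that no nontrivial linear combination of the classical homogeneous remainders $\{\partial^\al\Phi\}_{|\al|=2m}$ on $\R^{n+1}\setminus\{0\}$ can vanish: the combination with symbol $|\xi|^{2m}$, namely $(-\Delta)^m\Phi$, vanishes identically away from the origin (its entire content is the $\delta_0$). So the Fourier verification you propose cannot succeed at order $2m$ (nor at higher orders, where $P(\xi)$ divisible by $|\xi|^{2m}$ gives further examples). The repair is to exploit the singular part rather than exclude it: if $V_j$, $j\ge 2m$, has vanishing function part, then $\widehat{V_j}(\xi)=c\,P(\xi)|\xi|^{-2m}$ must be a polynomial, forcing $P(\xi)=|\xi|^{2m}Q(\xi)$ and $V_j=cQ(iD)\delta_0$; a nonzero distribution supported at the origin cannot be a summand of the $L^1$ function $v=u-h$ once the remaining $V_j$'s are functions, so $Q\equiv0$, hence $P\equiv0$ and all $c_\al$ with $|\al|=j$ vanish. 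Combined with your annulus-by-annulus homogeneity comparison in the case where the function part does not vanish, this closes the argument.
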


\begin{proof} The first part of theorem was proved by Futamura-Kishi-Mizuta \cite{FKM}. For the second part, noticing when $|\al|=2m-1$, $D^\al\Phi(X)$ is homogeneous and has negative part comparable to $|X|^{-n}$, which does not belong to weak-$L^{\frac{n+1}{n-1}}(B_1)$. So $c_\al=0$ for such $\al$.
\end{proof}

We refer to Futamura-Kishi-Mizuta \cite{FKM}, Ghergu-Moradifam-Taliaferro \cite{GMT} and references therein for related works on  B\^ocher's theorem of higher order equations.

\begin{cor}  \label{cor:Bocher} Let $u\in C^{2m}(\bar B_1^+\setminus \{0\})$ be a solution of \begin{align}\label{eq:L1_property}
\begin{cases}
(-\Delta)^m u=0 \quad &\mbox{in }B_1^+,\\
\pa_t u=\pa_t \Delta u=\cdots =\pa_t \Delta^{m-1}u=0\quad& \mbox{on }D_1\setminus \{0\}.
\end{cases}
\end{align}
Suppose that $u\in L^1(B_1^+)$ and
$u\geq g$  for some $g$ belonging to weak-$L^{\frac{n+1}{n-1}}(B_1^+)$,
then
\[
u(X)= h(X)+\sum_{|\al|\le 2m-2}c_{\al} D^{\al} \Phi(X),
\]
where $\al=(\al_1,\dots, \al_{n},\al_{n+1})\in \mathbb{N}^{n+1}$ with $\al_{n+1}$ being even, and $h(X)$ satisfies
\begin{align}\label{eq:prop_h}
\begin{cases}
(-\Delta)^m h=0 \quad &\mbox{in }B_1^+,\\
\pa_t h=\pa_t \Delta h=\cdots =\pa_t \Delta^{m-1}h=0\quad &\mbox{on }D_1.
\end{cases}
\end{align}
\end{cor}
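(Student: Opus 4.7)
The plan is to reduce the corollary to the whole-ball B\^ocher theorem (Theorem~\ref{thm:whole-a}) via even reflection across $\pa\R^{n+1}_+$. Set
\[
\tilde u(x,t) := u(x,|t|), \qquad (x,t)\in B_1\setminus\{0\}.
\]
First I would verify that $\tilde u\in L^1(B_1)$ and that $\tilde g(x,t):=g(x,|t|)$ is a weak-$L^{(n+1)/(n-1)}(B_1)$ lower bound for $\tilde u$ (both immediate from the corresponding hypotheses on $u$ and $g$), and then establish the key point that $\Delta^m\tilde u=0$ in $B_1\setminus\{0\}$ distributionally. Theorem~\ref{thm:whole-a} then produces
\[
\tilde u = \tilde h + \sum_{|\al|\leq 2m-2} c_\al\, D^\al\Phi
\]
with $\tilde h$ smooth and polyharmonic on $B_1$; the even-in-$t$ symmetry of $\tilde u$ will kill the terms with $\al_{n+1}$ odd and force $\tilde h$ to be even in $t$, after which restriction to $B_1^+$ gives the desired $h$. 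Evenness and smoothness of $\tilde h$ automatically yield $\pa_t\Delta^k h=0$ on all of $D_1$, since each such mixed derivative expands into a sum of terms containing an odd total number of $\pa_t$'s applied to an even-in-$t$ smooth function.

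\textbf{Reflection step (the main obstacle).} The delicate point is verifying that $\Delta^m\tilde u=0$ in $B_1\setminus\{0\}$ distributionally; here the vanishing hypotheses $\pa_t\Delta^k u=0$ on $D_1\setminus\{0\}$ for $k=0,\dots,m-1$ are precisely what is needed. For any $\phi\in C_c^\infty(B_1\setminus\{0\})$ I set $\phi^-(x,t):=\phi(x,-t)$ and $\psi:=\phi+\phi^-\in C_c^\infty(B_1\setminus\{0\})$; a change of variables in the lower half-ball gives
\[
\int_{B_1}\tilde u\,\Delta^m\phi\,dX=\int_{B_1^+}u\,\Delta^m\psi\,dX.
\]
Applying the Green identity displayed in the preliminary section and using $\Delta^m u=0$, the $\pa^+B_1^+$ contributions vanish by compact support of $\psi$, and on $D_1$ the remaining integrand
\[
-\sum_{i=1}^m\Bigl[(\Delta^{i-1}u)\,\pa_t(\Delta^{m-i}\psi)-(\Delta^{m-i}\psi)\,\pa_t(\Delta^{i-1}u)\Bigr]
\]
vanishes term by term: each $\Delta^{m-i}\psi$ is even in $t$, so $\pa_t\Delta^{m-i}\psi|_{t=0}=0$, while $\pa_t\Delta^{i-1}u$ vanishes on $D_1\cap\mathrm{supp}(\psi)$ by hypothesis. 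Hence $\Delta^m\tilde u=0$ in $B_1\setminus\{0\}$ distributionally, and classically by elliptic regularity.

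\textbf{Parity bookkeeping.} Given the decomposition, I split $\tilde h=\tilde h_e+\tilde h_o$ and $\sum_{|\al|\leq 2m-2} c_\al D^\al\Phi=S_e+S_o$ into their even and odd $t$-parts; since $\Phi$ is even in $t$, $S_o$ collects exactly those multi-indices with $\al_{n+1}$ odd. Evenness of $\tilde u$ forces $\tilde h_o+S_o=0$ on $B_1\setminus\{0\}$. Applying $(-\Delta)^m$, using $(-\Delta)^m\tilde h_o=0$ on $B_1$ together with $(-\Delta)^m D^\al\Phi=D^\al\delta_0$, yields
\[
\sum_{|\al|\leq 2m-2,\ \al_{n+1}\text{ odd}}c_\al\,D^\al\delta_0=0
\]
as distributions on $B_1$. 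Linear independence of the multi-index derivatives of the Dirac delta forces each such $c_\al=0$, and consequently $\tilde h_o\equiv 0$. Restricting to $B_1^+$ then completes the proof.
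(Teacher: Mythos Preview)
Your proof is correct and follows essentially the same approach as the paper: even reflection across $\{t=0\}$ reduces the problem to Theorem~\ref{thm:whole-a}, and the evenness of the reflected function in $t$ eliminates the terms with $\al_{n+1}$ odd. The paper's version is terser---it obtains smoothness of the reflection by iterating Poisson-equation regularity on $\Delta^{m-1}u,\Delta^{m-2}u,\dots,u$ rather than via your distributional computation, and disposes of the parity step by a reference to the method of Proposition~\ref{prop:Ch-sing}---but the strategy is the same.
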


\begin{proof} Let $u(x,t)=u(x,-t)$ and $g(x,t)=g(x,-t)$ for $t<0$. We abuse the notation to denote these two new functions still as $u$ and $g$, respectively. From the boundary condition and  regularity theory for Poisson  equation, we have $(-\Delta)^{m-1} u, (-\Delta)^{m-2} u, \dots , u$ are smooth in $B_1\setminus \{0\}$. Consequently, Theorem \ref{thm:whole-a} implies the decomposition of $u$. The boundary condition actually implies we can only have $D^\al\Phi$ in the decomposition with $\al_{n+1}$ of $\al=(\al_1,\dots, \al_{n},\al_{n+1})$ is even, see the proof of the last statement of Proposition \ref{prop:Ch-sing}.

Therefore, we complete the proof.
\end{proof}

B\^ocher theorem for positive harmonic functions can be viewed as a stronger version of Liouville theorem. Indeed,

\begin{cor}\label{cor:Bocher1} Let $u\in C^{2m}(\R^{n+1}_+\cup \{\partial\,\R^{n+1}_+\setminus \{0\}\})$ be a solution of \begin{align}\label{eq:Bocher1}
\begin{cases}
(-\Delta)^m u=0 \quad &\mbox{in }\R^{n+1}_+,\\
\pa_t u=\pa_t \Delta u=\cdots =\pa_t \Delta^{m-1}u=0\quad& \mbox{on }\pa \R^{n+1}_+\setminus \{0\}.
\end{cases}
\end{align}
Suppose that $u\in L^1(B_1^+)$ and
$u\geq g$  for some $g$ belonging to weak-$L^{\frac{n+1}{n-1}}(B_1^+)$, and $\displaystyle\lim_{|X|\to \infty} u(X)=0$.
Then
\[
u(X)=\sum_{|\al|\le 2m-2}c_{\al} D^{\al} \Phi(X) \quad \forall~X\in \R^{n+1}_+,
\]
where $\al=(\al_1,\dots, \al_{n},\al_{n+1})\in \mathbb{N}^{n+1}$ with $\al_{n+1}$ being even.
\end{cor}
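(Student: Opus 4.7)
The plan is to apply Corollary~\ref{cor:Bocher} near the origin to extract the singular part of $u$, and then reduce the rest to a Liouville-type statement on all of $\R^{n+1}$.

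First, since $u \in L^1(B_1^+)$ and $u \ge g$ on $B_1^+$, Corollary~\ref{cor:Bocher} applied on $B_1^+$ yields the decomposition
\[
u(X) = h(X) + \sum_{|\alpha| \le 2m-2} c_\alpha D^\alpha \Phi(X) \quad \text{in } B_1^+,
\]
with the sum running over multi-indices $\alpha$ whose last component $\alpha_{n+1}$ is even, and with $h$ a smooth solution of \eqref{eq:prop_h} on $B_1^+$. I would then set
\[
w(X) := u(X) - \sum_{|\alpha| \le 2m-2} c_\alpha D^\alpha \Phi(X) \quad \text{on } \R^{n+1}_+.
\]
On $\R^{n+1}_+$, $w$ satisfies $(-\Delta)^m w = 0$, and on $\partial \R^{n+1}_+ \setminus \{0\}$ it inherits the homogeneous Neumann-type conditions from $u$ (the $D^\alpha \Phi$ terms satisfy them there since $\alpha_{n+1}$ is even). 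The identity $w = h$ on $B_1^+$ shows that $w$ extends smoothly across the origin on the boundary, so in fact $w \in C^{2m}(\R^{n+1}_+ \cup \partial \R^{n+1}_+)$ and the boundary conditions hold on all of $\partial \R^{n+1}_+$.

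Next I would extend $w$ to all of $\R^{n+1}$ by even reflection, $\tilde w(x,t) := w(x,|t|)$. The conditions $\partial_t \Delta^k w = 0$ on $\partial \R^{n+1}_+$ for $k = 0, 1, \dots, m-1$ exactly match the orders of normal derivative that must vanish for the even extension to be $C^{2m}(\R^{n+1})$ and to remain polyharmonic, so $\tilde w$ is a classical solution of $(-\Delta)^m \tilde w = 0$ on all of $\R^{n+1}$. Since $2m < n+1$, for each $|\alpha| \le 2m-2$ the derivative $D^\alpha \Phi(X) = O(|X|^{2m-n-1-|\alpha|})$ tends to $0$ as $|X| \to \infty$, and combined with the hypothesis $u(X) \to 0$ this yields $\tilde w(X) \to 0$ at infinity; in particular $\tilde w$ is bounded on $\R^{n+1}$.

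Finally I would invoke the standard interior Cauchy-type estimate for polyharmonic functions,
\[
|D^\alpha \tilde w(X_0)| \le \frac{C(n,m,\alpha)}{R^{|\alpha|}} \|\tilde w\|_{L^\infty(B_R(X_0))} \quad \text{for every } R > 0,
\]
obtained by scaling the interior $C^k$ bounds for $(-\Delta)^m$. Letting $R \to \infty$ and using boundedness forces $D^\alpha \tilde w \equiv 0$ for all $|\alpha| \ge 1$, so $\tilde w$ is constant, and the decay at infinity gives $\tilde w \equiv 0$. Therefore $u = \sum_{|\alpha| \le 2m-2} c_\alpha D^\alpha \Phi$ on $\R^{n+1}_+$, as claimed. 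The step requiring the most care is verifying that $\tilde w$ is genuinely a classical polyharmonic function on all of $\R^{n+1}$, not only on $\R^{n+1} \setminus \{0\}$; this hinges on the smoothness of $h$ up to the origin (provided by Corollary~\ref{cor:Bocher}) together with the matching of the odd-order normal derivatives that makes the even reflection $C^{2m}$.
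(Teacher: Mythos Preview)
Your argument is correct and follows essentially the same route as the paper's proof: apply Corollary~\ref{cor:Bocher} to extract the singular part $\sum_{|\alpha|\le 2m-2} c_\alpha D^\alpha\Phi$, observe that the remainder $w$ satisfies the homogeneous Neumann-type conditions on all of $\partial\R^{n+1}_+$ (including the origin), reflect evenly to obtain an entire polyharmonic function on $\R^{n+1}$ that decays at infinity, and conclude $w\equiv 0$ by interior estimates. The only cosmetic difference is that the paper phrases the first step as applying Corollary~\ref{cor:Bocher} on ``consecutively large half balls'' to obtain the decomposition directly on all of $\R^{n+1}_+$, whereas you apply it once on $B_1^+$ and then propagate $w$ globally; the two formulations are equivalent since the coefficients $c_\alpha$ are determined by the singularity at the origin and do not depend on the radius.
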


\begin{proof} Applying Corollary \ref{cor:Bocher} with $B_1^+$ replaced by consecutively large half balls, we have
\[
u(X)=h(X)+\sum_{|\al|\leq 2m-2}c_\al D^\al\Phi(X) \quad \forall~X\in \R^{n+1}_+,
\]
with each $\al$'s $\al_{n+1}$ even. Since $|\al|\le 2m-2$,
\[\lim_{|X|\to \infty}|h(X)|\le \lim_{|X|\to \infty}|u(X)|+\lim_{|X|\to \infty}\left|\sum_{|\al|\leq 2m-2}c_\al D^\al\Phi(X)\right|=0.\]
By the \eqref{eq:prop_h},  extending $h$ to lower half plane one can get a smooth polyharmonic function on $\R^{n+1}$ which is bounded and converges to $0$ as $|X|\to \infty$. By the interior estimates for solutions of linear elliptic PDEs,  one can easily obtain that $h\equiv 0$.
Therefore, we complete the proof.
\end{proof}

The method of proof of  Proposition \ref{prop:Ch-sing} can give a direct proof of Corollary \ref{cor:Bocher1}. Corollary \ref{cor:Bocher} is of independent interest and will be useful in study of local analysis of solutions of the nonlinear problem.

\section{Isolated singularity for nonlinear boundary data}

Now let us go back to the nonlinear boundary problems we want to study.
Suppose $0\le u\in C^{2m}(\R^{n+1}_+\cup \pa \R^{n+1}_+)$ be a solution of \eqref{eq:thma-1} with $1< p\le \frac{n+(2m-1)}{n-(2m-1)}$. Then, {by Lemma \ref{lem:Kelvin-Neumann}}, $u^*=u_{0,1}$ satisfies
\be \label{eq:thma-1'}
\begin{cases}
\Delta^{m} {u^*}(x,t)=0 \quad &\mbox{in }\R^{n+1}_+,\\
\pa_t{u^*}=\pa_t\Delta u^*=\cdots=\pa_t\Delta^{m-2}u^*(x,0)=0, &\mbox{on }\pa \R^{n+1}_+\setminus \{0\},
\\ (-1)^m\pa_t \Delta^{m-1} {u^*}(x,0)= |x|^{-\tau}{u^*}^{p} \quad &\mbox{on }\pa \R^{n+1}_+\setminus \{0\},
\end{cases}
\ee
where $\tau=[n+(2m-1)]-p[n-(2m-1)]\geq 0$. The goal of this section is to show:
\begin{prop}\label{prop:integrability} Let $u^*$ be as above. If either one of the two items holds
\begin{enumerate}
\item [(1)]$m$ is odd;
\item [(2)]$m$ is even and $u(X)=o(|X|^{2m-1})$ as $|X|\to \infty$,
\end{enumerate} then
\begin{align}\label{eq:goal}
\int_{D_1}|x|^{-\tau}u^*(x,0)^p\ud x<\infty.
\end{align}
\end{prop}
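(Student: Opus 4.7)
My plan is to split $u^*$ into a Poisson--extension piece (controlled by its boundary trace) and a polyharmonic remainder with fully homogeneous lower--order Neumann conditions, use the B\^ocher/Liouville machinery of Section~3 to describe the singularity of the remainder, and then combine these with the nonlinear boundary condition to absorb the weight $|x|^{-\tau}$ and obtain \eqref{eq:goal}.

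First I would establish the baby version $u^*(\cdot,0)\in L^1(D_1)$ by a test--function calculation. Take a radial smooth cutoff $\phi(X)=\phi_0(|X|)$ with $\phi_0\equiv 1$ on $[0,1/2]$ and $\phi_0\equiv 0$ off $[0,3/4]$. Because $\phi$ is radial, $\pa_t\Delta^k\phi(x,0)\equiv 0$ on $D_1$ for every $k$, so Green's identity applied on $B_1^+\setminus B_\va^+$ to $u^*$ and $\phi$ collapses, after using the boundary conditions of \eqref{eq:thma-1'}, to
\[
(-1)^m\int_{D_1\setminus D_\va}\phi(x,0)\,|x|^{-\tau}u^{*p}\,\ud x
=\int_{B_1^+\setminus B_\va^+}u^*\,\Delta^m\phi\,\ud X-R_\va,
\]
with $R_\va$ collecting boundary integrals on $\pa^+B_\va^+$. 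Nonnegativity of $u^*$, boundedness of $\Delta^m\phi$, and standard interior elliptic estimates in the annular shells $B_{2\va}^+\setminus B_{\va/2}^+$ control $R_\va$ as $\va\to 0$ and yield $u^*(\cdot,0)\in L^1(D_1)$.

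Next, with the trace integrable, form $v:=\mathcal{P}_m*(\chi_{D_1}u^*(\cdot,0))$. By Lemmas~\ref{lem:Poisson-ext} and~\ref{lem:Poisson}, $v$ is smooth, solves \eqref{eq:DVP}, and lies in weak--$L^{(n+1)/n}(\R^{n+1}_+)$. The remainder $h:=u^*-v$ is polyharmonic in $B_1^+$ with the \emph{fully homogeneous} conditions $h(x,0)=0$ and $\pa_t\Delta^kh(x,0)=0$ for $k=0,\ldots,m-2$ on $D_1\setminus\{0\}$, so all the nonlinear information is encoded in its $(m{-}1)$-th normal derivative. Applying the flexible version of Theorem~\ref{thm:positive_homo} adapted to this boundary--condition set (as flagged in the introduction), together with Corollary~\ref{cor:Bocher} after reflection across $\pa\R^{n+1}_+$, gives a B\^ocher--type description of $h$ near $0$: a finite linear combination of explicit homogeneous singular profiles plus a smooth polyharmonic tail. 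Combined with the explicit pointwise bound on $v$ read off from the Poisson kernel, this yields a local estimate $u^*(X)\le C|X|^{-\beta}+(\text{integrable})$ near the origin, where $\beta$ is controlled from above by nonnegativity (for $m$ odd, via the argument in Proposition~\ref{prop:Ch-sing}) or by $u^*(X)=o(|X|^{-n})$, which is the $m$-Kelvin translate of the growth hypothesis when $m$ is even.

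Feeding this pointwise control into $|x|^{-\tau}u^{*p}$ and using $\tau=[n+(2m-1)]-p[n-(2m-1)]\ge 0$ then gives \eqref{eq:goal} in the Serrin--supercritical range $p\ge n/(n-2m+1)$. When $p$ falls strictly below Serrin's exponent $n/(n-2m+1)$, one pass is insufficient and one bootstraps: feed the improved pointwise bound into the Neumann extension $\mathcal{N}_m*(\chi_{D_1}|x|^{-\tau}u^{*p})$ via Lemmas~\ref{lem:V}--\ref{lem:Poisson-neu}, recompute the B\^ocher decomposition of $h$, and iterate until the exponent closes. This Serrin--subcritical regime is the main obstacle: the linear singularities supplied by the B\^ocher terms and the boundary singularity $|x|^{-\tau}u^{*p}$ have comparable order, so neither the Caffarelli--Gidas--Spruck test--function trick nor the ODE analysis of Lin and Wei--Xu applies directly, and the delicate exponent tracking of the bootstrap, anchored by the growth hypothesis when $m$ is even and by positivity when $m$ is odd, is where the new ingredient must be supplied.
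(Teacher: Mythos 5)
Your outline follows the paper's general architecture (Poisson extension of the trace, a polyharmonic remainder analyzed by the Liouville/B\^ocher results of Section~3, then a bootstrap), but three of its load-bearing steps are not correct as written. First, the opening test-function computation cannot work with a cutoff $\phi\equiv 1$ near the origin: the boundary term $R_\va=\int_{\pa^+B_\va^+}\pa_\nu\Delta^{m-1}u^*\,dS$ is \emph{not} controlled by interior elliptic estimates plus $u^*\in L^1$ (those give only $o(\va^{-2m})$), and it genuinely diverges for the Kelvin transform of $H_a$ when $m$ is even --- for which the conclusion \eqref{eq:goal} is in fact false, so any argument that never uses the parity of $m$ or the growth hypothesis must break exactly here. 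The paper instead uses a degenerate test function $\xi_\va\to r^{2m}/C(m,n)$ vanishing near the origin (Lemma~\ref{lemma:babyversion}(ii)), which only yields the weighted bound $\int_{D_1}|x|^{2m-\tau}u^*(x,0)^p\,\ud x<\infty$; the gap between the weight $|x|^{2m}$ and the weight $1$ is the whole content of the proposition. Relatedly, your B\^ocher description of the remainder glosses over the one term that matters: $w=u-v$ contains a possible $c_0t^{2m-1}$, whose Kelvin transform contributes $c_0(2m-1)!|x|^{-(n+2m-1)}$ to the Neumann data of $v^*$ (Proposition~\ref{pro:reduction}), and killing $c_0$ is precisely where oddness of $m$ (via the ODE representation \eqref{eq:ode-a} and a weak-$L^{\frac{n+1}{n-1}}$ argument) or the growth hypothesis enters (Lemma~\ref{lem:c-zero}).

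Second, the bootstrap you propose does not close in the range you yourself identify as the main obstacle. The iteration of Lemmas~\ref{lem:upgrade}--\ref{lem:dividing} drives the admissible weight exponent down to the fixed point $q_\infty=2m-n-1+\frac{n}{p}$, which is nonnegative exactly when $p\le \frac{n}{n-2m+1}$; no amount of further iteration reaches the unweighted integral there. (Your variant, feeding the data into $\mathcal{N}_m*(\chi_{D_1}|x|^{-\tau}u^{*p})$, is moreover circular, since it presupposes $|x|^{-\tau}u^{*p}\in L^1(D_1)$.) The paper's treatment of $1<p\le\frac{n}{n-2m+1}$ is a separate contradiction argument: if \eqref{eq:goal} fails, Green's identity forces $L^{m-1}\bar{v^*}(r)\le -r^{1-n}$ or $\ge r^{1-n}$ according to the parity of $m$, the dichotomy Lemma~\ref{lem:induction} upgrades this to $\bar{v^*}(r)\gtrless \pm c\,r^{2m-1-n}$, positivity of $v^*$ excludes one branch, and the balance and vanishing Lemmas~\ref{lem:balance}--\ref{lem:vanish} (through Lemma~\ref{lem:F}) exclude the other. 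You explicitly defer this ingredient rather than supply it, so the proposal does not constitute a proof in the subcritical range, and its first step is invalid in all ranges.
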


Let us start from basic properties of $u^*$.

\begin{lem}\label{lemma:babyversion} Let ${u^*}$ be a nonnegative solution of \eqref{eq:thma-1'}. Then
\begin{itemize}
\item[(i)] ${u^*}\in L^1(B_1^+)$,
\item[(ii)] $\int_{D_1} |x|^{2m-\tau}{u^*}(x,0)^pdx<\infty$,
\item[(iii)] If $p>1$, then $\int_{D_1}{u^*}(x,0)^{s}dx<\infty$ for some $s>1$.
%\item[(iv)] If $m\geq 2$, then $\int_{D_r}{u^*}(x,0)\ud x\leq Cr^2$.
\end{itemize}

\end{lem}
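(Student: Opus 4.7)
I would mirror the proof of Theorem~\ref{thm:positive_homo}: take $\eta_\varepsilon(t)=(t-\varepsilon)_+^{2m}/(2m)!\in C^{2m-1,1}(\R)$, which satisfies $\Delta^m\eta_\varepsilon=\chi_{\{t>\varepsilon\}}$ and vanishes together with all derivatives up to order $2m-1$ on $[0,\varepsilon]$. Applying Green's identity to $u^*$ against $\eta_\varepsilon$ on $B_1^+$ kills every $D_1$ boundary term (since $\Delta^k\eta_\varepsilon(x,0)$ and $\pa_t\Delta^k\eta_\varepsilon(x,0)$ both vanish for $k=0,\dots,m-1$), while the $\pa^+B_1^+$ contribution is bounded independently of $\varepsilon$ because $u^*$ is smooth there. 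This yields $\int_{B_1^+\cap\{t>\varepsilon\}}u^*\,dX\le C$, and monotone convergence gives (i).

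\textbf{Plan for Part (ii).} I would fix a smooth monotone cutoff $\chi:[0,\infty)\to[0,1]$ with $\chi\equiv 0$ on $[0,1]$ and $\chi\equiv 1$ on $[2,\infty)$, and use the test function $\phi_\varepsilon(X):=|x|^{2m}\chi(|X|/\varepsilon)$. Since $\phi_\varepsilon$ is smooth and even in $t$, every $\Delta^k\phi_\varepsilon$ is even in $t$, which forces $\pa_t\Delta^k\phi_\varepsilon(x,0)=0$ for all $k\ge 0$ and kills every $D_1$ term in Green's identity except the $i=m$ one. A Leibniz scaling estimate (using $|\nabla^\al|x|^{2m}|=O(|x|^{2m-|\al|})$ and $|\nabla^\beta\chi(|X|/\varepsilon)|=O(\varepsilon^{-|\beta|})$ in the annulus $\{\varepsilon<|X|<2\varepsilon\}$) gives $|\Delta^m\phi_\varepsilon|\le C$ uniformly in $\varepsilon$. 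Green's identity then produces
\[
\int_{B_1^+}u^*\Delta^m\phi_\varepsilon\,dX=\int_{\pa^+B_1^+}[\cdots]\,dS+(-1)^m\int_{D_1}\chi(|x|/\varepsilon)|x|^{2m-\tau}u^{*p}\,dx,
\]
with the left-hand side bounded by $C\int_{B_1^+}u^*\,dX<\infty$ via (i) and, for $\varepsilon<1/2$, the $\pa^+B_1^+$ term independent of $\varepsilon$ (since $\chi(|X|/\varepsilon)\equiv 1$ on $\pa^+B_1^+$). I would then conclude (ii) by monotone convergence as $\varepsilon\downarrow 0$.

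\textbf{Plan for Part (iii).} I would apply H\"older's inequality to the factorization $u^{*s}=(|x|^{2m-\tau}u^{*p})^{s/p}|x|^{-(2m-\tau)s/p}$ with conjugate exponents $p/s$ and $p/(p-s)$:
\[
\int_{D_1}u^{*s}\,dx\le\Bigl(\int_{D_1}|x|^{2m-\tau}u^{*p}\,dx\Bigr)^{s/p}\Bigl(\int_{D_1}|x|^{-(2m-\tau)s/(p-s)}\,dx\Bigr)^{(p-s)/p}.
\]
The first factor is finite by (ii); the second is finite for some $s\in(1,p)$ whenever either $2m-\tau\le 0$ or $s<np/[1+p(n-2m+1)]$ (substituting $\tau=(n+2m-1)-p(n-2m+1)$). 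A quick check shows such $s>1$ exists precisely when $p(2m-1)>1$, which is guaranteed by $p>1$ and $m\ge 1$.

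\textbf{Main obstacle.} The hard part will be Part (ii): designing a test function that simultaneously (a) satisfies the homogeneous boundary conditions $\pa_t\Delta^k\phi(x,0)=0$ on $D_1$ for $k=0,\dots,m-1$, (b) produces the correctly weighted integrand $|x|^{2m-\tau}u^{*p}$ after Green's identity, and (c) tames the unknown singularity of $u^*$ at the origin. The choice $|x|^{2m}\chi(|X|/\varepsilon)$ succeeds because evenness in $t$ handles (a) automatically, the algebra $|x|^{2m}\cdot|x|^{-\tau}=|x|^{2m-\tau}$ handles (b), and the vanishing of $|x|^{2m}$ to order $2m$ at the origin exactly absorbs the $O(\varepsilon^{-2m})$ derivative blow-up of the cutoff in the transition annulus.
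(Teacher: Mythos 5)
Your proposal is correct and follows essentially the same route as the paper: part (i) is the paper's argument verbatim, part (iii) is the same H\"older computation, and for part (ii) your test function $|x|^{2m}\chi(|X|/\varepsilon)$ is a cosmetic variant of the paper's radial $\xi_\varepsilon$ solving $\Delta^m\xi_\varepsilon=\chi_{\{|X|>\varepsilon\}}$ with $\xi_\varepsilon\to c\,|X|^{2m}$ — both are even in $t$, vanish to order $2m$ at the origin, and have $\Delta^m$ bounded uniformly, which is all the argument needs.
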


\begin{proof}
(i) ${u^*}\in L^1(B_1^+)$  was shown in the proof of Theorem \ref{thm:positive_homo}.

(ii) Let $r=|X|$,  and construct a smooth radial function $\xi_\va$ such that $\Delta ^m \xi_\va(r)=\chi_{\{r>\va\}}(r)$ for given $\va>0$, and $\xi_\va=0$ in $B_{\va/2}$. It is easy to show $\xi_\va\to \frac{1}{C(m,n)} r^{2m}$ in $C^{0}$, where $C(m,n)=\Delta^m r^{2m}>0$. Since $\xi_\va$ is radially symmetric, then $\pa_t\Delta^k \xi_\va(x,0)=0$ for any $k\geq 0$. Noticing that $\pa_t \Delta^k {u^*}$ vanishes for $k=0,\dots,m-2$ and using $\xi_\va$ as a test function in Green's identity,  we obtain
\[
\int_{D_1} \xi_\va(|x|) |x|^{-\tau}{u^*}(x,0)^p\,\ud x\le (-1)^m \int_{B_1^+} {u^*}(X)\chi_{\{r>\va\}}(|X|)\ud X+C.
\]
By item (i) and sending $\va\to 0$, then  $|x|^{2m-\tau}{u^*}(x,0)^p\in L^1(D_1)$.

(iii) By the definition of $\tau$, it is easy to check
\[p>\frac{2m-\tau}{n}+1.\]
Choosing $b$ such that
\[ \max\left\{\frac{2m-\tau}{n}+1,1\right\}< b< p,\]
then from H\"older's inequality
\[\int_{D_1}{u^*}(x,0)^{\frac{p}{b}}\ud x\leq\left(\int_{D_1}|x|^{2m-\tau}{u^*(x,0)}^p\ud x\right)^\frac{1}{b}\left(\int_{D_1}|x|^{-\frac{2m-\tau}{b-1}}\ud x\right)^{1-\frac{1}{b}}.\]
Noticing $(2m-\tau)/(b-1)<n$, it yields ${u^*}(x,0)\in L^s(D_1)$ for $s=p/b>1$.

Therefore, the lemma is proved.
\end{proof}

{
Since $u^*(x,0)\in L^1(D_1)$ and $u^*(x,0)\in L^\infty(\R^{n}\setminus D_1)$, then
\be\label{eq:v-star}
v^*:=\mathcal{P}_m*u^*
\ee is well-defined. }

\begin{prop} \label{pro:reduction} Let $v^*$ be in \eqref{eq:v-star}. Then we have ${v^*}\in L^{\frac{(n+1)}{n}}(B_1^+)$ and
\begin{align}
\begin{cases}\label{eq:v_kelvin}
\Delta^{m} {v^*}(x,t)=0 \quad &\text{in } \R^{n+1}_+,\\
\pa_t{v^*}=\pa_t\Delta v^*=\cdots=\pa_t\Delta^{m-2} v^*(x,0)=0 \quad &\text{on }\pa \,\R^{n+1}_+\setminus \{0\},\\
(-1)^m\pa_t \Delta^{m-1} {v^*}(x,0)= |x|^{-\tau}{v^*}^{p}-c_0{(-1)^m} |x|^{-(2m-1+n)} \quad &\text{on }\pa\, \R^{n+1}_+\setminus \{0\},
\end{cases}
\end{align}
where $c_0\ge 0$ is a constant.
\end{prop}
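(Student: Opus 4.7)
\emph{Setup: integrability and the Dirichlet structure of $v^*$.} The plan is to realize $v^*=\mathcal{P}_m*u^*(\cdot,0)$ as a standard Poisson extension, and then to extract the Neumann data of $v^*$ from a classification of the difference $W:=u^*-v^*$. By Lemma \ref{lemma:babyversion}(iii), $u^*(\cdot,0)\in L^s(D_1)$ for some $s>1$, while $u^*(\cdot,0)\in L^\infty(\R^n\setminus D_1)$. Splitting $u^*(\cdot,0)=f_1+f_2$ accordingly and applying Lemma \ref{lem:Poisson} gives $\mathcal{P}_m*f_1\in L^{(n+1)s/n}(\R^{n+1}_+)\subset L^{(n+1)/n}(B_1^+)$ on the bounded set, and $\mathcal{P}_m*f_2\in L^\infty(\R^{n+1}_+)$, whence $v^*\in L^{(n+1)/n}(B_1^+)$. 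Lemma \ref{lem:Poisson-ext} together with the remark after it (applied on the open set $\R^n\setminus\{0\}$ where $u^*(\cdot,0)$ is smooth) yields $v^*\in C^\infty(\R^{n+1}_+\cup(\partial\R^{n+1}_+\setminus\{0\}))$, $\Delta^m v^*=0$, $v^*(x,0)=u^*(x,0)$, and $\partial_t\Delta^k v^*(x,0)=0$ for $k=0,\dots,m-2$ on $\R^n\setminus\{0\}$.

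\emph{Classification via Kelvin and Proposition \ref{prop:Ch-sing}.} The difference $W:=u^*-v^*$ is polyharmonic on $\R^{n+1}_+$, smooth away from the boundary origin, and satisfies the homogeneous Dirichlet--Navier system \eqref{eq:ch5} on $\R^n\setminus\{0\}$. By Lemma \ref{lemma:babyversion}(i) and the previous step, $W\in L^1(B_1^+)$, and since $u^*\ge 0$ one has $W\ge -v^*$ with $-v^*\in L^{(n+1)/n}(B_1^+)$. Passing to the $m$-Kelvin transform $\tilde W:=W_{0,1}$, Lemma \ref{lem:de} shows $\tilde W$ solves the same homogeneous system on $\R^n\setminus\{0\}$, and $(\tilde W)^*=W$. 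Applying Proposition \ref{prop:Ch-sing} (with the lower-bound hypothesis supplied by $g=-v^*$) to $\tilde W$ then produces
\[
\tilde W(x,t)=\sum_{k=1}^{m-1}t^{2k}P_{2k}(x)+c_0\,t^{2m-1},\qquad c_0\ge 0,\ \deg P_{2k}\le 2m-2-2k.
\]

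\emph{Reading off the Neumann data.} Taking the Kelvin transform back, $W$ is a sum of the $m$-Kelvin transforms of $t^{2k}P_{2k}(x)$ ($1\le k\le m-1$) and of $c_0t^{2m-1}$. For each polynomial term, expanding $\Delta^{m-1}$ by Leibniz displays $(-1)^m\partial_t\Delta^{m-1}(t^{2k}P_{2k})|_{t=0}$ as a sum of $\partial_t^{2j+1}(t^{2k})\Delta_x^{m-1-j}P_{2k}$ at $t=0$, all of which vanish since the odd integer $2j+1$ never equals the even $2k$. By Lemma \ref{lem:Kelvin-Neumann} the Kelvin of $t^{2k}P_{2k}$ therefore has vanishing Neumann data on $\R^n\setminus\{0\}$. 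On the other hand, $(-1)^m\partial_t\Delta^{m-1}(c_0t^{2m-1})|_{t=0}=(-1)^m c_0(2m-1)!$ is a constant, which Lemma \ref{lem:Kelvin-Neumann} transforms into $|x|^{-2(2m-1)}\cdot|x|^{2m-1-n}\cdot(-1)^m c_0(2m-1)!=(-1)^m c_0(2m-1)!|x|^{-(n+2m-1)}$. Summing,
\[
(-1)^m\partial_t\Delta^{m-1}W(x,0)=(-1)^m c_0(2m-1)!\,|x|^{-(n+2m-1)}\quad\text{on }\R^n\setminus\{0\}.
\]
Subtracting this from the nonlinear Neumann identity $(-1)^m\partial_t\Delta^{m-1}u^*=|x|^{-\tau}(u^*)^p$, and using $v^*(x,0)=u^*(x,0)$ to replace $(u^*)^p$ by $(v^*)^p$ on the boundary, gives exactly \eqref{eq:v_kelvin} after renaming $(2m-1)!c_0$ as $c_0$.

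\emph{Main obstacle.} The delicate point is justifying that Proposition \ref{prop:Ch-sing} actually applies to $\tilde W$: its statement requires $\tilde W\in C^{2m}(\R^{n+1}_+\cup\partial\R^{n+1}_+)$, in particular smoothness at the boundary origin, which under Kelvin corresponds to regularity and sufficient decay of $W$ at infinity. A direct expansion of $u^*(X)=|X|^{2m-1-n}u(X/|X|^2)$ together with an asymptotic analysis of $v^*=\mathcal{P}_m*u^*(\cdot,0)$ shows that $u^*(r\omega)\sim r^{2m-1-n}u(0,0)$ while $v^*(r\omega)$ has leading order $r^{2m-1-n}$ with a different angular profile dictated by the Poisson kernel. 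Hence $W(X)=O(|X|^{2m-1-n})$ without faster decay in general, so $\tilde W$ is merely bounded (not manifestly $C^{2m}$) at the origin. Upgrading this boundedness to $C^{2m}$-regularity at the origin---in effect a boundary removable singularity result for polyharmonic functions with homogeneous Dirichlet--Navier data on the punctured boundary---is the technical heart of the argument and the one step of the plan that is not routine.
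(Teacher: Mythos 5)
Your route is structurally the same as the paper's: split $u^*(\cdot,0)$ to get $v^*\in L^{\frac{n+1}{n}}(B_1^+)$, form $W=u^*-v^*$, classify its Kelvin transform via Proposition \ref{prop:Ch-sing} with lower bound $g=-v^*$, and read off the Neumann data. The integrability estimate and the computation of $(-1)^m\pa_t\Delta^{m-1}W(x,0)$ are both correct. The problem is the step you yourself flag as the ``main obstacle'' and leave open: the $C^{2m}$ regularity of $\tilde W=W_{0,1}$ up to the boundary at the origin. As written, your proof is incomplete precisely at the point where Proposition \ref{prop:Ch-sing} is invoked, and your heuristic for why this step is delicate is actually wrong: you claim $u^*$ and $v^*$ have leading order $r^{2m-1-n}$ at infinity ``with a different angular profile,'' but both have the \emph{same} leading behavior $|X|^{2m-1-n}u(0,0)$, since $(v^*)_{0,1}(0,0)=\big(\mathcal{P}_m*u(\cdot,0)\big)(0,0)=u(0,0)$.

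The gap closes immediately once you use Lemma \ref{lem:commute} (with Remark \ref{rem:2.5}) at the level of the un-starred functions, which is what the paper does. Since Kelvin inversion is an involution and commutes with the Poisson convolution, $\tilde W=W_{0,1}=u-(v^*)_{0,1}=u-\mathcal{P}_m*u(\cdot,0)$; the convolution is well defined because $\int_{|y|>1}u(y,0)|y|^{-(n+2m-1)}\,\ud y=\int_{D_1}u^*(y,0)\,\ud y<\infty$. Now $u\in C^{2m}(\R^{n+1}_+\cup\pa\R^{n+1}_+)$ by hypothesis, and $u(\cdot,0)$ is smooth on \emph{all} of $\R^n$ (no puncture), so by Lemma \ref{lem:Poisson-ext} and the remark following it the extension $\mathcal{P}_m*u(\cdot,0)$ is smooth up to the entire boundary and satisfies the homogeneous conditions there, with trace $u(\cdot,0)$. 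Hence $\tilde W\in C^{2m}(\R^{n+1}_+\cup\pa\R^{n+1}_+)$ solves \eqref{eq:ch5} everywhere, $(\tilde W)^*=W\in L^1(B_1^+)$ by Lemma \ref{lemma:babyversion}(i) and your first step, and $W\ge -v^*\in L^{\frac{n+1}{n}}(B_1^+)$, so Proposition \ref{prop:Ch-sing} applies with no extra removable-singularity argument. In short: there is no boundary singularity to remove, because the potential singularity of $W$ at the origin was transplanted to infinity for $\tilde W$, where the hypothesis $u\in C^{2m}$ up to the boundary plus the smoothness of the Poisson extension of the globally smooth datum $u(\cdot,0)$ already give the needed regularity.
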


\begin{proof}
Decompose ${u^*}(x,0)={u}^*_1(x,0)+{u}^*_2(x,0)$ for $x\in \R^n$, where $u^*_1(x,0)={u^*}(x,0)\chi_{D_1}(x)$ and $\chi_{D_1}$ is the characteristic function of $D_1$. Then ${v^*}=v^*_1+v^*_2$ with $v_1^*$ and $v_2^*$ are given by the corresponding Poisson type convolutions of $u_1^*(x,0)$ and $u_2^*(x,0)$ as in \eqref{eq:v-star}, respectively.

Since ${u^*}\in L^{s}(D_1)$ for some $s>1$ by Lemma \ref{lemma:babyversion},  we have $v^*_1\in {L^{\frac{(n+1)s}{n}}}(\R^{n+1}_+)$ by Lemma \ref{lem:Poisson}.
On the other hand, since ${u^*}(x,0)=O(|x|^{2m-1-n})$ as $x\to\infty$,  then $u^*_2(x,0)\in L^{q}(\R^n)$ for any {$q>\frac{n}{n+1-2m}$}. Using Lemma \ref{lem:Poisson} again yields $v^*_2\in L^{\bar q}(\R^{n+1}_+)$ for any {$\bar q>\frac{n+1}{n+1-2m}$.}
Restricting $v_1^*$ and $v_2^*$ in $B_1^+$ and notice that
\[\min\left\{\frac{(n+1)s}{n},\frac{n+1}{n+1-2m}\right\}>\frac{n+1}{n},\]
we proved ${v^*}\in L^{\frac{n+1}{n}}(B_1^+)$.

By Lemma \ref{lem:de}, $v^*$ satisfies the first two lines of \eqref{eq:v_kelvin}. Let $v=(v^*)_{0,1}$. By Lemma \ref{lem:commute} and the remark after it,  ${v}(x,0)={u}(x,0)$ on $\R^n$. Define $w={u}-{v}$, which satisfies \eqref{eq:ch5} in Proposition \ref{prop:Ch-sing}. $w^*\geq -v^*$ will satisfy the assumption of Proposition \ref{prop:Ch-sing}, therefore we conclude
\be \label{eq:poly1}
w(x,t)=\sum_{k=1}^{m-1} t^{2k}P_{2k}(x)+c_{0}t^{2m-1},
\ee
where $c_0\ge 0$, $P_{2k}(x)$ are polynomials w.\,r.\,t.\,$x$ of degree $\le 2m-2-2k$. Therefore,
\[\pa_t\Delta^{m-1}{v^*}=\pa_t\Delta^{m-1}{u^*}-\pa_t\Delta^{m-1}w^*,\]
Since
\begin{align*}
\pa_t\Delta^{m-1}w^*(x,0)&=c_0\pa_t\Delta^{m-1}(|X|^{1-2m-n}t^{2m-1})(x,0)\\
&=c_0(2m-1)!|x|^{-(2m-1+n)},
\end{align*}
the proposition follows immediately.
\end{proof}

Naively one may wish $c_0=0$, then $u^*$ and $v^*$ share the same equations. However, as we said in the introduction, there are special cases, for example when $m$ is even, $u^*$ will be the $m-$Kelvin transformation of $H_a(x,t)$ in \eqref{eq:specialsolution}, but $v^*\equiv a^{1/p}|X|^{2m-1-n}$, so $c_0\neq 0$. On the other hand, we will prove that under the assumptions in Proposition \ref{prop:integrability}, we have $c_0=0$. To that end, we need to analyze the symmetrization of the solutions. When applied to  radially symmetric functions  in $\R^{n+1}$ the Laplace operator $\Delta$ is expressed as
\[L=\frac{d^2}{dr^2}+\frac{n}{r}\frac{d}{d r}.\]

\begin{lem}\label{lem:ODE}
Suppose that $w\in C^{2m}(\R^{n+1}_+\cup \{\pa\,\R^{n+1}_+\setminus \{0\}\})$ satisfies
\begin{align*}
\begin{cases}
\Delta^{m} {w}(x,t)=0 \quad &\text{in } \R^{n+1}_+,\\
\pa_t{w}=\cdots=\pa_t\Delta^{m-2}w(x,0)=0 \quad &\text{on }\pa \R^{n+1}_+\setminus \{0\},\\
(-1)^m\pa_t \Delta^{m-1} {w}(x,0)= f(x) \quad &\text{on }\pa \R^{n+1}_+\setminus \{0\}.
\end{cases}
\end{align*}
Then
\begin{align}
L^{m}\bar {w}(r)&=(-1)^m\frac{\w_{n-1}}{\w_{n}}r^{-1} [f]_r,
\label{eq:symm-w}
\end{align}
where  $\bar {w}(r)= \fint_{\pa^+ B_r^+}  {w}(x,t)\,\ud S_{x,t}$ and  $[f]_{r}=\fint_{\pa D_r} f(x)\,\ud \sigma$ and $\w_n,\w_{n-1}$ are the volume constants.

\end{lem}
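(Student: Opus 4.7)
The plan is to reduce the statement to a single elementary averaging identity and then iterate $m$ times. Specifically, I will establish that for any $g \in C^{2}(\overline{\R^{n+1}_+})$, the upper-hemisphere average $\bar g(r) = \fint_{\partial^+ B_r^+} g \, dS$ obeys
\begin{equation*}
L \bar g(r) \;=\; \overline{\Delta g}(r) \;+\; \frac{c}{r}\,[\partial_t g(\cdot,0)]_r
\end{equation*}
for a dimensional constant $c$ (equal to $\omega_{n-1}/\omega_n$ under the paper's normalization of the volume constants). Once this is in hand, I iterate with $g = w, \Delta w, \dots, \Delta^{m-2} w$: by the homogeneous Neumann-type conditions $\partial_t \Delta^{k} w(\cdot,0) = 0$ for $k = 0,\dots,m-2$, the boundary term dies at each step, yielding $L^{m-1}\bar w = \overline{\Delta^{m-1} w}$. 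One last application of the identity with $g = \Delta^{m-1} w$ now picks up $\overline{\Delta^{m} w} = 0$ together with $\partial_t \Delta^{m-1} w(\cdot,0) = (-1)^m f$, giving exactly \eqref{eq:symm-w}.

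To derive the core identity, I would parametrize $\partial^+ B_r^+$ as $r\xi$ with $\xi \in S^{n}_+$, so that differentiating under the integral gives $r^{n}\bar g'(r)$ as a constant multiple of $\int_{\partial^+ B_r^+} \partial_r g \, dS$. The divergence theorem on $B_r^+$ converts this flux into
\begin{equation*}
\int_{\partial^+ B_r^+} \partial_r g \, dS \;=\; \int_{B_r^+} \Delta g \, dX \;+\; \int_{D_r} \partial_t g(x,0) \, dx,
\end{equation*}
where the sign in the second integral comes from the outer unit normal $-\hat t$ on $D_r \subset \partial B_r^+$. Differentiating in $r$ once more, the shell contributes $\int_{\partial^+ B_r^+} \Delta g \, dS$ (which, after division by $r^n$, reassembles into $\overline{\Delta g}(r)$) and the flat piece contributes $\int_{\partial D_r} \partial_t g(x,0) \, d\sigma = \omega_{n-1} r^{n-1} [\partial_t g(\cdot,0)]_r$. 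Dividing the resulting expression for $(r^{n} \bar g')' = r^{n} L \bar g$ by $r^n$ yields the identity.

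The main obstacle is not conceptual but purely bookkeeping. First, the $(-1)^m$ factor must be tracked through the iteration: it arises entirely from the last boundary condition $(-1)^m \partial_t \Delta^{m-1} w = f$, since each intermediate step contributes no sign. Second, one must be careful with the outer-normal orientation on $D_r$ (viewed as a face of $\partial B_r^+$), which is where the $+\int_{D_r} \partial_t g$ appears rather than its negative. Finally, the constant $\omega_{n-1}/\omega_n$ reflects the relative normalizations of $|\partial^+ B_r^+| = \tfrac12 \omega_n r^{n}$ and $|\partial D_r| = \omega_{n-1} r^{n-1}$; with that consistent convention the dimensional factor comes out as stated.
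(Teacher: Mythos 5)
Your overall strategy is exactly the paper's: establish the one-step identity relating $L\bar g$ to $\overline{\Delta g}$ plus a flux through $\pa D_r$, then iterate $m$ times, with the homogeneous conditions $\pa_t\Delta^k w(\cdot,0)=0$, $k=0,\dots,m-2$, killing the flux at the first $m-1$ steps and the last boundary condition producing $(-1)^m\frac{\w_{n-1}}{\w_n}r^{-1}[f]_r$. The iteration scheme, the sign of the $D_r$ contribution (outer normal $-e_{n+1}$), and the tracking of the factor $(-1)^m$ are all correct.

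The one step that would fail as written is the derivation of the core identity via the divergence theorem on the full half-ball $B_r^+$. You state it for $g\in C^2(\overline{\R^{n+1}_+})$, but the lemma only assumes $w\in C^{2m}(\R^{n+1}_+\cup\{\pa\R^{n+1}_+\setminus\{0\}\})$, and the functions to which the lemma is actually applied ($u^*$ and $v^*$, Kelvin transforms) are genuinely singular at the origin. Consequently $\int_{B_r^+}\Delta g\,\ud X$ and $\int_{D_r}\pa_t g(x,0)\,\ud x$ need not converge, so the flux balance on $B_r^+$ is not available. The repair is exactly what the paper does: apply Green's identity on a half-annulus ($B_1^+\setminus B_r^+$ in the paper, or $B_r^+\setminus B_\rho^+$ with $\rho$ fixed), whose closure avoids the origin, and then differentiate the resulting identity in the outer radius $r$. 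This yields the same local identity $L\bar g(r)=\overline{\Delta g}(r)+\frac{\w_{n-1}}{\w_n}r^{-1}[\pa_t g(\cdot,0)]_r$ for every $r>0$ with no integrability hypothesis at $0$, after which your iteration goes through verbatim. A minor bookkeeping point: the paper's own computation shows its convention is $|\pa^+B_r^+|=\w_n r^n$ (so $\w_n$ is the area of the unit half-sphere); with your stated normalization $|\pa^+B_r^+|=\tfrac12\w_n r^n$ the constant would come out as $2\w_{n-1}/\w_n$ instead.
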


\begin{proof}
By the definition of $\bar {w}$, taking derivatives leads to
\[
r^{n}\bar {w}'(r)=\frac{1}{\w_n}\int_{\pa^+ B_r^+}\frac{\pa {w}}{\pa \nu}\,\ud S=-\frac{1}{\w_n} \int_{B_1^+\setminus  B_r^+}\Delta {w}\,\ud X+\frac{1}{\w_n}\int_{\pa^+ B_1^+}\frac{\pa {w}}{\pa \nu}\,\ud S,
\] where $r\in(0,1)$, $\nu$ is the outer unit normal of the boundary and we used $\pa_t {w}(x,0)=0$. It follows that
\[
L \bar {w}= \fint_{\pa^+B_{r}^+} \Delta {w}\,\ud S.
\]
Using $\pa_t \Delta^k {w}(x,0)=0$ for $k=1,\dots, m-2$ and repeating this process, we have
\be \label{eq:L1}
L^{m-1} \bar {w}=\fint_{\pa^+B_{r}^+} \Delta^{m-1} {w}\,\ud S.
\ee
By Green's identity, we have for any $0<r<1$
\begin{align*}
&\int_{\pa^+ B_1^+}\frac{\pa \Delta^{m-1} {w}}{\pa \nu}\,\ud S- \int_{\pa^+ B_r^+}\frac{\pa \Delta^{m-1} {w}}{\pa \nu}\,\ud S
-\int_{D_1\setminus D_r}\pa_t\Delta^{m-1} {w}\,\ud x\\
=& \int_{B_1^+\setminus B_r^+} \Delta^m {w}=0.
\end{align*}
Taking derivative in $r$, we have
\be\label{eq:to-to-1}
\frac{d}{dr} \int_{\pa^+ B_r^+}\frac{\pa \Delta^{m-1} {w}}{\pa \nu}\,\ud S =(-1)^m\w_{n-1} r^{n-1}[f]_{r}.
\ee
Since
\be\label{eq:to-to-2}
\frac{d}{dr}\fint_{\pa^+B_{r}^+} \Delta^{m-1} {w}\,\ud S=  \fint_{\pa^+ B_r^+} \frac{\pa \Delta^{m-1} {w}}{\pa \nu}\,\ud S,
\ee
then \eqref{eq:L1} implies
\begin{align*}
L^m\bar w(r)=&\frac{1}{r^n}\frac{d}{dr}\left(r^n\frac{d}{dr}\fint_{\pa^+ B_r^+} \Delta^{m-1} {w}\,\ud S\right)=\frac{1}{r^n}\frac{d}{dr}\left(r^n\fint_{\pa^+ B_r^+} \frac{\pa \Delta^{m-1} w}{\pa \nu}\,\ud S\right)\\
=&\frac{1}{w_nr^n}\frac{d}{dr}\int_{\pa^+ B_r^+}\frac{\pa \Delta^{m-1} {w}}{\pa \nu}\,\ud S=(-1)^m \frac{w_{n-1}}{w_n}r^{-1}[f]_r.
\end{align*}
Therefore, we complete the proof.
\end{proof}
Notice that $u^*$ satisfies \eqref{eq:thma-1'} and $v^*$ satisfies \eqref{eq:v_kelvin}.
It follows from the above lemma that:
\begin{cor}
\begin{align}
\label{eq:symm-u}
L^{m}\bar {u^*}(r)&=(-1)^m\frac{\w_{n-1}}{\w_{n}}r^{-\tau-1} [{u^*}^p]_r,\\
L^{m}\bar {v^*}(r)&=(-1)^m\frac{\w_{n-1}}{\w_{n}}\{r^{-\tau-1} [{v^*}^p]_r-c_{0}(-1)^m{r^{-n-2m}}\}.
\label{eq:symm-v}
\end{align}
\end{cor}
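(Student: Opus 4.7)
The proof is essentially a direct application of Lemma \ref{lem:ODE}, which I would invoke twice (once with $w=u^*$ and once with $w=v^*$). The main observation is that both \eqref{eq:thma-1'} and \eqref{eq:v_kelvin} have exactly the structure required by the hypothesis of the lemma on $\R^{n+1}_+\setminus\{0\}$: polyharmonic in the interior, vanishing normal derivatives of $\Delta^k$ for $k=0,\dots,m-2$ on the punctured boundary, and a prescribed $(-1)^m\partial_t\Delta^{m-1}$ on the punctured boundary. Since the averaging sphere $\partial D_r$ in the formula $[f]_r=\fint_{\partial D_r}f\,\ud\sigma$ avoids the singular point $0$ for every $r>0$, the lemma applies on the annular region and the resulting ODE holds pointwise for $r>0$.

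First I would take $w=u^*$. By \eqref{eq:thma-1'} the boundary datum is $f(x)=|x|^{-\tau}u^*(x,0)^p$. On $\partial D_r$ the weight $|x|^{-\tau}=r^{-\tau}$ is constant, so
\[
[f]_r=r^{-\tau}[{u^*}^p]_r,
\]
and Lemma \ref{lem:ODE} immediately yields \eqref{eq:symm-u}. Next I would take $w=v^*$. Proposition \ref{pro:reduction} verifies the hypotheses of Lemma \ref{lem:ODE} (with the singular point excluded), and the boundary datum is
\[
f(x)=|x|^{-\tau}v^*(x,0)^p-c_0(-1)^m|x|^{-(n+2m-1)}.
\]
Again $|x|=r$ on $\partial D_r$ so both radial weights factor out, giving
\[
[f]_r=r^{-\tau}[{v^*}^p]_r-c_0(-1)^m r^{-(n+2m-1)}.
\]
Multiplying by $(-1)^m(\w_{n-1}/\w_n)r^{-1}$ as in Lemma \ref{lem:ODE} produces \eqref{eq:symm-v}.

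There is essentially no obstacle here beyond bookkeeping. The one mild subtlety worth mentioning is integrability at $r$ fixed: the averages $[{u^*}^p]_r$ and $[{v^*}^p]_r$ are finite for each $r>0$ because $u^*,v^*\in C^{2m}$ away from the origin (for $v^*$, smoothness off $\{0\}$ follows from the Poisson representation \eqref{eq:v-star} together with $u^*\in L^1(D_1)$ from Lemma \ref{lemma:babyversion} and the bound $u^*(x,0)=O(|x|^{2m-1-n})$ at infinity). Hence both sides of \eqref{eq:symm-u} and \eqref{eq:symm-v} are well-defined functions of $r>0$, and the identities hold classically on $(0,\infty)$ (restricted to $(0,1)$ if one prefers to stay inside the unit half-ball as in the proof of Lemma \ref{lem:ODE}).
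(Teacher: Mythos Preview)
Your proposal is correct and matches the paper's approach exactly: the paper introduces the corollary with the single sentence ``Notice that $u^*$ satisfies \eqref{eq:thma-1'} and $v^*$ satisfies \eqref{eq:v_kelvin}. It follows from the above lemma that:'' and gives no further argument. Your write-up simply fills in the bookkeeping (factoring out $|x|^{-\tau}$ and $|x|^{-(n+2m-1)}$ on $\partial D_r$) and notes the regularity of $v^*$ away from the origin, which is all that is needed.
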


\begin{lem}\label{lem:c-zero} Under the assumptions in Proposition \ref{prop:integrability}, we have $c_0=0$.
\end{lem}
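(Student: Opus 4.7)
The nonnegativity $c_0 \geq 0$ is already known from Proposition \ref{prop:Ch-sing} applied to $w = u - v$, using $w^* \geq -v^*$ with $v^* \in L^{(n+1)/n}(B_1^+)$, so the task reduces to ruling out $c_0 > 0$ in each case.

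For case (2), when $m$ is even and $u(X) = o(|X|^{2m-1})$ at infinity, I plan to compare leading singularities of spherical averages on both sides of $u^* = v^* + w^*$ near the origin. A direct change of variables yields the identity $\bar{\phi^*}(r) = r^{2m-1-n}\bar{\phi}(1/r)$ for the $m$-Kelvin transform, so the growth hypothesis translates to $\bar{u^*}(r) = o(r^{-n})$ as $r \to 0$. Since each term $t^{2k}P_{2k}(x)$ in $w$ has total degree at most $2m-2$ (because $\deg P_{2k} \leq 2m-2-2k$), it contributes only $O(R^{2m-2})$ to $\bar{w}(R)$, whereas $c_0 t^{2m-1}$ contributes $c_0 \kappa_m R^{2m-1}$ with $\kappa_m := \fint_{\partial^+ B_1^+} t^{2m-1}\,dS > 0$. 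The Kelvin identity then gives $\bar{w^*}(r) = c_0 \kappa_m r^{-n} + O(r^{1-n})$, and subtracting yields
\[
\bar{v^*}(r) = -c_0 \kappa_m r^{-n} + o(r^{-n}) \qquad \text{as } r \to 0.
\]
Since $v^* \geq 0$ implies $\bar{v^*} \geq 0$, this forces $c_0 \leq 0$, and combining with $c_0 \geq 0$ gives $c_0 = 0$.

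For case (1), when $m$ is odd, the growth hypothesis is absent, but the factor $(-1)^m = -1$ in \eqref{eq:symm-v} aligns the signs of both right-hand terms: since $v^* \geq 0$ and $c_0 \geq 0$, we get automatically
\[
L^m \bar{v^*}(r) \leq -c_0 \frac{\omega_{n-1}}{\omega_n} r^{-n-2m}, \qquad r \in (0,1).
\]
The plan is to integrate this inequality $m$ times using $L h = r^{-n}(r^n h')'$, and argue inductively that
\[
L^{m-k}\bar{v^*}(r) \leq -C_k\, r^{-n-2m+2k}, \qquad r \in (0,\rho_k),
\]
for $k = 0, 1, \ldots, m$ with positive constants $C_k$. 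Each induction step consists of multiplying by $r^n$, integrating on $(r,\rho)$ to control $(L^{m-k-1}\bar{v^*})'$, then dividing by $r^n$ and integrating once more; both integrations have integrands with exponent $\leq -1$ throughout the induction, so the divergent parts as $r \to 0$ dominate the bounded endpoint contributions at $\rho$, and the exponent of the singular bound increases by exactly two per step. After $m$ iterations one arrives at $\bar{v^*}(r) \leq -C_m r^{-n}$ for $r$ small, contradicting $\bar{v^*} \geq 0$ and forcing $c_0 = 0$.

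The main obstacle is the bookkeeping in case (1): I must verify at each of the $m$ iterations that the new singular term with its negative coefficient genuinely dominates the endpoint data at $\rho$ and that none of the intermediate exponents is exceptional (e.g.\ equal to $-1$, which would produce a logarithm rather than a power and would only strengthen the divergence). Case (2), by contrast, amounts to a clean computation of the leading coefficient of $\bar{v^*}$ at the origin via the Kelvin transform; no iteration is needed there.
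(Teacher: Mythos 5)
Your proposal is correct, and for the odd case it takes a genuinely different route from the paper. For even $m$ the two arguments are essentially the same: the paper simply notes that $w=u-v\le u=o(|X|^{2m-1})$ while $w=\sum_k t^{2k}P_{2k}+c_0t^{2m-1}$ with the polynomial part of total degree $\le 2m-2$, which kills $c_0$; your spherical-average/Kelvin reformulation ($\bar{v^*}(r)=-c_0\kappa_m r^{-n}+o(r^{-n})\ge 0$) is the same computation in different clothing. For odd $m$, however, the paper integrates the ODE \eqref{eq:symm-u} for $\bar{u^*}$ twenty-$m$ times, uses that the particular solution $F\ge 0$ enters with the sign $(-1)^m=-1$ to conclude $\bar{u^*}(r)\le Cr^{1-n}$, upgrades this to a pointwise bound so that $u^*$, $v^*$, hence $w^*$ lie in weak-$L^{\frac{n+1}{n-1}}(B_1^+)$, and then excludes the term $c_0|X|^{-(n+2m-1)}t^{2m-1}$ because it is only weak-$L^{\frac{n+1}{n}}$. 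You instead keep the $c_0$-term in \eqref{eq:symm-v}, observe that for odd $m$ it has the same (negative) sign as the nonlinear term, and iterate the resulting differential inequality $L^m\bar{v^*}\le -c_0\frac{\w_{n-1}}{\w_n}r^{-n-2m}$ downward to force $\bar{v^*}(r)\le -C_mr^{-n}<0$, contradicting $v^*\ge 0$. Both proofs hinge on the same sign observation, but yours stays entirely at the level of the one-dimensional averages and is structurally identical to the paper's own Dichotomy Lemma \ref{lem:induction} (with source $r^{-n-2m}$ instead of $r^{1-n}$); your bookkeeping is sound — the intermediate exponents $-2(m-k)$ and $-n-2(m-k)+1$ are always $\le -2$ and $\le -3$ respectively, so no logarithms arise and the divergent powers dominate the endpoint data at each of the $m$ steps. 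What your route buys is that it avoids the paper's slightly terse passage from a bound on the spherical average of $\bar{u^*}$ to weak-$L^{\frac{n+1}{n-1}}$ membership of $u^*$ (which implicitly needs the annulus elliptic estimate of Proposition \ref{prop:Ch-sing}); what the paper's route buys is the explicit representation \eqref{eq:ode-a}, which is reused nowhere else but records the full structure of $\bar{u^*}$ near the origin.
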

\begin{proof} By the ODE of $\bar{u^*}$, one can integrate $2m$ times to get
\begin{align} \label{eq:ode-a}
\bar {u^*}(r)=&a\Phi(r)+\sum_{k=2}^m \left\{b_kr^{2(m-k)-n+1}+c_kr^{2(m-k)}\right\}+ \frac{(-1)^m\w_{n-1}}{\w_n} F(r),
\end{align}
where $a,b_k,c_k$ are constants depending only on $C^{2m}$ norm of $u^*$ near $\pa^+ B_1^+$, and
\[
F(r)= \int_{r}^1 r_{2m-1}^{-n}\int_{r_{2m-1}}^1r_{2m-2}^{n}\int \cdots \int_{r_4}^1 r_3^{n}\int_{r_3}^1 r_{2}^{-n}\int_{r_{2}}^1r_1^{n} r_1^{-\tau-1}[{u^*}(x,0)^p]_{r_1}\,\ud r_1\cdots \ud r_{2m-1}\ud r.
\]
If $m$ is odd, \eqref{eq:ode-a} gives $\bar {u^*}(r)\le C b_m r^{-n+1}$ for small $r$. Similarly, $\bar {v^*}(r)\le C b_m r^{-n+1}$  for small $r$. Since $u^*$ and $v^*$ are positive, $u^*$, $v^*$ and $w^*:=u^*-v^*$ must belong to weak- $L^{\frac{n+1}{n-1}}(B_1^+)$. By \eqref{eq:poly1}, $c_0 |X|^{-(n+2m-1)}t^{2m-1}$ has to belong weak-$ L^{\frac{n+1}{n-1}}(B_1^+)$, which forces $c_0=0$.

On the other hand, if $m$ is even and $u(X)=o(|X|^{2m-1})$, by \eqref{eq:poly1} and the fact that $w=u-v\leq u$ we immediately have $c_0=0$.

In conclusion, we complete the proof.
\end{proof}

Next two lemmas can boost the regularity of $u^*$ by iteration.
\begin{lem}\label{lem:upgrade}
Under the assumptions of Proposition \ref{prop:integrability}.  If $u^*(x,0)\in L^s(D_1)$ for some $s>1$, then
\[\int_{D_1}|x|^{q-\tau}u^*(x,0)^p\ud x<\infty\]
for any $q> 2m-n-1+\frac{n}{s}$.
\end{lem}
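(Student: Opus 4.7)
The key observation is that $u^*(x,0)=v^*(x,0)$ for $x\in D_1\setminus\{0\}$. Indeed, in the splitting $u^*=v^*+w^*$ from Proposition~\ref{pro:reduction}, we have $c_0=0$ under our hypotheses by Lemma~\ref{lem:c-zero}, so the polynomial $w(x,t)=\sum_{k=1}^{m-1} t^{2k}P_{2k}(x)$ satisfies $w(x,0)\equiv 0$; a direct $m$-Kelvin calculation then yields $w^*(x,0)=0$ for $x\ne 0$. Consequently
\[
\int_{D_1}|x|^{q-\tau}u^*(x,0)^p\,\ud x=\int_{D_1}|x|^{q-\tau}v^*(x,0)^p\,\ud x,
\]
and it suffices to bound the right-hand side. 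By Proposition~\ref{pro:reduction} with $c_0=0$, the function $v^*$ satisfies the same polyharmonic boundary value problem \eqref{eq:thma-1'} as $u^*$, but now on all of $\pa\R^{n+1}_+\setminus\{0\}$.

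I would test $v^*$ against the radial function $\phi(X)=|X|^q$. Since $\phi$ depends on $X$ only through $|X|^2$, every $\Delta^k\phi$ is likewise radial, so $\pa_t\Delta^k\phi(x,0)=0$ for all $k\ge 0$; moreover $\Delta^m\phi=C(n,m,q)|X|^{q-2m}$ with
\[
C(n,m,q)=\prod_{j=0}^{m-1}(q-2j)(q-2j+n-1),
\]
which is nonzero for $q$ outside a discrete exceptional set (that can be avoided by a slight perturbation of $q$, or handled with a logarithmic test function). Applying Green's identity to $v^*$ and $\phi$ on $B_1^+\setminus B_\varepsilon^+$, the boundary conditions on $v^*$ together with the vanishing $\pa_t\Delta^k\phi(x,0)=0$ collapse every $D_1\setminus D_\varepsilon$ contribution except the $i=m$ term, which reduces to $(-1)^m|x|^{q-\tau}v^{*p}$. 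This yields
\[
(-1)^m\int_{D_1\setminus D_\varepsilon}|x|^{q-\tau}v^{*p}\,\ud x=C(n,m,q)\int_{B_1^+\setminus B_\varepsilon^+}v^*(X)|X|^{q-2m}\,\ud X-\Xi(1)+\Xi(\varepsilon),
\]
where $\Xi(r)$ denotes the boundary integral on $\pa^+B_r^+$.

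Splitting $u^*(\cdot,0)$ into its restriction to $D_1$ (in $L^s$ by hypothesis) and its $O(|x|^{2m-1-n})$-decaying tail outside $D_1$, Lemma~\ref{lem:Poisson} applied to each piece gives $v^*\in L^{(n+1)s/n}(B_1^+)$. By H\"older's inequality,
\[
\int_{B_1^+}v^*(X)|X|^{q-2m}\,\ud X\le\|v^*\|_{L^{(n+1)s/n}(B_1^+)}\,\bigl\||X|^{q-2m}\bigr\|_{L^r(B_1^+)},
\]
with $r=(n+1)s/((n+1)s-n)$, and a direct computation shows the second factor is finite precisely when $r(q-2m)>-(n+1)$, i.e.\ $q>2m-n-1+n/s$. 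The outer boundary term $\Xi(1)$ is controlled by the $C^{2m}$-regularity of $v^*$ on $\pa^+B_1^+$, and monotone convergence handles both the left-hand side and the bulk integral as $\varepsilon\to 0$, the integrands being nonnegative. The main technical obstacle is to show $\Xi(\varepsilon)\to 0$ along some sequence $\varepsilon_k\to 0^+$: since $v^*$ may be singular at the origin, this requires integrating the identity in $\varepsilon$ over a small interval and invoking Fubini to convert $\Xi(\varepsilon)$ into a bulk integral, which can then be controlled using $\|v^*\|_{L^{(n+1)s/n}(B_1^+)}$ together with Poisson-kernel estimates on derivatives of $v^*$ near the origin.
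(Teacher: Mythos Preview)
Your approach is the same idea as the paper's---test $v^*$ against $|X|^q$ and use Green's identity together with the $L^{(n+1)s/n}(B_1^+)$ integrability of $v^*$---but your execution leaves a genuine gap at the inner boundary. The term $\Xi(\varepsilon)$ is a sum of surface integrals on $\partial^+B_\varepsilon^+$ involving $\Delta^{i-1}v^*$ and $\partial_\nu\Delta^{i-1}v^*$ for $i=1,\dots,m$, i.e.\ derivatives of $v^*$ up to order $2m-1$. Integrating in $\varepsilon$ and using Fubini does convert these to bulk integrals over an annulus, but those bulk integrals still contain the same high derivatives of $v^*$; the $L^{(n+1)s/n}$ bound on $v^*$ itself says nothing about them. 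Your appeal to ``Poisson-kernel estimates on derivatives'' would have to produce something like $|X|^{k}|D^k v^*|\in L^{(n+1)s/n}(B_1^+)$, and this is neither stated nor obvious from Lemma~\ref{lem:Poisson}; the kernel $D^\alpha\mathcal{P}_m$ changes sign and is more singular, so a separate argument (weighted singular-integral bounds, or interior elliptic estimates combined with control of the nonlinear boundary data on annuli) would be needed. There is also a sign issue you glossed over: the constant $C(n,m,q)$ can be of either sign for $q$ in the relevant range, and the identity carries a factor $(-1)^m$, so boundedness of the bulk term alone does not immediately bound the boundary integral without knowing $\Xi(\varepsilon)$ is controlled.

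The paper sidesteps all of this with one device: instead of integrating over $B_1^+\setminus B_\varepsilon^+$, it uses the smooth cutoff $\phi_\varepsilon(X)=\eta(|X|/\varepsilon)|X|^q$ with $\eta\equiv 0$ near $0$ and $\eta\equiv 1$ for $r\ge 1$, and applies Green's identity on all of $B_1^+$. Since $\phi_\varepsilon$ vanishes near the origin there is no inner boundary term at all; the price is that $\Delta^m\phi_\varepsilon$ acquires extra terms from derivatives of $\eta$, but these are supported in the shell $\{\varepsilon/2\le|X|\le\varepsilon\}$ and are pointwise bounded by $C|X|^{q-2m}$ uniformly in $\varepsilon$. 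Thus only $\int_{B_1^+}v^*|X|^{q-2m}\,\ud X$ and $\varepsilon^{q-2m}\int_{B_\varepsilon^+}v^*\,\ud X$ need to be controlled, both via H\"older with $\|v^*\|_{L^{(n+1)s/n}}$---no derivative of $v^*$ ever appears. Replacing your sharp annular cutoff by this smooth one is exactly the missing ingredient; with it your argument becomes the paper's.
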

\begin{proof}

By the proof of Proposition \ref{pro:reduction}, $v^*\in L^{\tilde s}(B^+_1)$, where
\[\tilde s=\min\left\{\frac{(n+1)s}{n}, \frac{n+1}{n-2m+1}\right\}.\]
Fix  any $q> 2m-n-1+\frac{n}{s}$.
Choose $0\le \eta(r)\in C^\infty(0,\infty)$ such that
$\eta(r)=0$ when $r<1/2$ and $\eta(r)=1$ when $r>1$ and define
\[\phi_\va(X)=\eta\left(\frac{|X|}{\va}\right)|X|^q.\]
Multiplying $v^*$ by $\phi_\va$ and using Green's identity over $B^+_1$, we have
\begin{align*}
\int_{B_1}v^* \Delta^m \phi_\va\ud X= \int_{D_1} \eta\left(\frac{|x|}{\va}\right) |x|^{q-\tau} {u^*}(x,0)^p\,\ud x+C.
\end{align*}
Sending $\va\to 0$, the first term of RHS will converge to the integral we want to bound, while the LHS will be uniformly bounded. Indeed,
 by H\"older's inequality and the radial symmetry of $\phi_\va$,
\begin{align*}
\int_{B_1^+}|v^* \Delta^m \phi_\va|\ud X &\le C\sum_{k=0}^{2m}\int_{B_1^+}v^* |X|^{q-k} \left|\frac{\ud^{2m-k}}{\ud r^{2m-k}}\eta\left(\frac{r}{\va}\right)\right|\ud X\\&
\le C \int_{B_1^+}v^* |X|^{q-2m}\ud X+ C \va^{q-2m}\int_{B_\va^+} v^* \ud X\\&
\le C(n,q)\|v^*\|_{L^{\tilde s}(B_1^+)}+C(n,q)\|v^*\|_{L^{\tilde s}(B_1^+)}\va^{q-2m+(n+1)(1-1/\tilde{s})}\\&
\le C,
\end{align*}
where we used the assumption on $q$ to give $q-2m+(n+1)(1-1/\tilde{s})> n/s-(n+1)/\tilde s\geq 0$.

Therefore, we complete the proof.
\end{proof}
\begin{lem}\label{lem:dividing}
Assume the assumptions in Proposition \ref{prop:integrability}. Then for any $1<p\leq \frac{n+2m-1}{n-2m+1}$ we have
\[\int_{D_1} |x|^{q-\tau}u^*(x,0)^p\ud x<\infty\quad\text{and}\quad u^*(x,0)\in L^p(D_1)\]
where $q>2m-n-1+\frac{n}{p}$. In particular, if $p>\frac{n}{n-2m+1}$, $q$ can achieve $0$ thus \eqref{eq:goal} holds.
\end{lem}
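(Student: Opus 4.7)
The plan is a bootstrap cycling between Lemma \ref{lem:upgrade} and H\"older's inequality, starting from the initial integrability $u^*(\cdot,0)\in L^{s_0}(D_1)$, $s_0>1$, supplied by Lemma \ref{lemma:babyversion}(iii), and progressing up to $L^p(D_1)$. Given $u^*(\cdot,0)\in L^{s_k}(D_1)$, Lemma \ref{lem:upgrade} yields $\int_{D_1}|x|^{q-\tau}u^*(x,0)^p\,\ud x<\infty$ for every $q>2m-n-1+n/s_k$. Writing $u^*(x,0)^r=\bigl(|x|^{q-\tau}u^*(x,0)^p\bigr)^{r/p}|x|^{-(q-\tau)r/p}$ and applying H\"older with exponents $p/r$ and $p/(p-r)$ gives
\[
\int_{D_1}u^*(x,0)^r\,\ud x\le \Bigl(\int_{D_1}|x|^{q-\tau}u^*(x,0)^p\,\ud x\Bigr)^{r/p}\Bigl(\int_{D_1}|x|^{-(q-\tau)r/(p-r)}\,\ud x\Bigr)^{(p-r)/p},
\]
whose right-hand side is finite provided $r<np/(q-\tau+n)$. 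Sending $q\downarrow 2m-n-1+n/s_k$, any exponent
\[
s_{k+1}<f(s_k):=\frac{nps_k}{s_k\bigl(p(n-2m+1)-n\bigr)+n}
\]
is admissible.

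Set $T(p):=n/\bigl(2n-p(n-2m+1)\bigr)$. A direct calculation yields the key identity $f(T(p))=p$, and the strict inequality $p>T(p)$ reduces to $p^2(n-2m+1)-2np+n<0$, whose roots $p_\pm=\bigl(n\pm\sqrt{n(2m-1)}\bigr)/(n-2m+1)$ bracket the admissible range: one checks $p_-<1$ and $p_+>(n+2m-1)/(n-2m+1)\ge p$ using $2m\le n$. Because $f$ is continuous and increasing with $f(s)>s$ whenever $s$ is less than the positive fixed point $s^*:=n(p-1)/\bigl(p(n-2m+1)-n\bigr)$ (conventionally $+\infty$ when $p\le n/(n-2m+1)$), selecting each $s_{k+1}$ sufficiently close to $f(s_k)$ produces a strictly increasing sequence whose values exceed $T(p)$ after finitely many steps.

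Once $s_k>T(p)$, the strict inequality $2m-n-1+n/s_k<\tau$ allows us to pick $q\in\bigl(2m-n-1+n/s_k,\tau\bigr]$ in Lemma \ref{lem:upgrade}. Since $|x|\le 1$ on $D_1$ and $q-\tau\le 0$ imply $|x|^{q-\tau}\ge 1$, we deduce $\int_{D_1}u^*(x,0)^p\,\ud x\le \int_{D_1}|x|^{q-\tau}u^*(x,0)^p\,\ud x<\infty$, i.e.\ $u^*(\cdot,0)\in L^p(D_1)$. A last application of Lemma \ref{lem:upgrade} with $s=p$ then delivers $\int_{D_1}|x|^{q-\tau}u^*(x,0)^p\,\ud x<\infty$ for every $q>2m-n-1+n/p$. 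When $p>n/(n-2m+1)$, the threshold $2m-n-1+n/p$ is strictly negative, $q=0$ is permissible, and \eqref{eq:goal} follows. The crux is the iteration dynamics, encoded in the identity $f(T(p))=p$, which certifies that the H\"older/Lemma \ref{lem:upgrade} trade-off exactly converts below-threshold integrability into above-target integrability, so the bootstrap cannot stall below $T(p)$.
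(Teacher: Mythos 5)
Your proof is correct and takes essentially the same route as the paper's: the alternation of Lemma \ref{lem:upgrade} with H\"older's inequality produces exactly the recursion $s_{k+1}=f(s_k)$ that the paper encodes as the $(q_k,s_k)$-property, with the open choices of $q$ and $s_{k+1}$ handling the same endpoint issues. Your fixed-point identity $f(T(p))=p$ together with $p>T(p)$ is an explicit repackaging of the paper's limiting contradiction $a=\frac{a}{p}+\frac{1-2m}{p}<a$, so both arguments certify in equivalent ways that the iteration cannot stall below the threshold $T(p)$.
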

\begin{proof}
Let us call $u^*(x,0)$ has $(q,s)-$property if
\[\int_{D_1}|x|^{q'-\tau}u^*(x,0)^p\,\ud x<\infty\quad\forall\, q'> q\text{ and } u^*(x,0)\in L^{s'}(D_1)\quad \forall\, s'< s.\]

From Lemma \ref{lemma:babyversion} item (ii), $u^*(x,0)$ has $(q_0,s_0)-$property with $q_0=2m$, $s_0=\frac{np}{n+(2m-\tau)^+}=\frac{np}{n+(q_0-\tau)^+}>1$, where $a^+=\max\{a,0\}$ for any constant $a$. From Lemma \ref{lem:upgrade}, we have
\[\int_{D_1}|x|^{q-\tau}u^*(x,0)^p\,\ud x<\infty\quad \forall\, q> q_1=2m-n-1+\frac{n}{s_0}.\]
From this, one can repeat the proof of Lemma \ref{lemma:babyversion} item (ii) to see
\[u^*(x,0)\in L^s(D_1)\quad \forall\, s<s_1=\frac{np}{n+(q_1-\tau)^+}.\]
Therefore $u^*(x,0)$ has $(q_1,s_1)-$property. Moreover, it is easy to see $q_1<q_0$ and $s_1>s_0$. By iterating all the above steps, we have $u^*(x,0)$ has $(q_k,s_k)-$property,
\begin{align}\label{eq:qn}
q_k=2m-n-1+\frac{n}{s_{k-1}}\quad\text{and}\quad s_k=\frac{np}{n+(q_k-\tau)^+}.
\end{align}
Moreover $q_0>q_1\geq \cdots\geq q_k$ and $s_0<s_1\leq \cdots\leq s_k$.

\textbf{Claim:} There exist some $k$ finite such that $q_k\leq \tau$ and $s_k=p$.

Suppose not, then we will have an infinite many $q_k>\tau$ which are non-increasing. Suppose $\lim_{k\to \infty}q_k=a\geq \tau$, consequently \eqref{eq:qn} implies
\[a=2m-n-1+\frac{a-\tau+n}{p}=\frac{a}{p}+\frac{1-2m}{p}<a,\]
which is a contradiction. The claim is proved.

Thus after some finite steps, we will have $s_k=p$ and $q_k=2m-n-1+\frac{n}{p}=\frac{\tau-2m+1}{p}<\tau$ for some $k$ finite. Namely, $u^*(x,0)\in L^p(D_1)$ and
\[\int_{D_1}|x|^{q_k-\tau}u^*(x,0)^p\,\ud x<\infty.\]
In particular if $p>\frac{n}{n-2m+1}$, then $q_k=2m-n-1+\frac{n}{p}< 0$ and
\[\int_{D_1}|x|^{-\tau}u^*(x,0)^p\,\ud x<\infty.\]
We complete the proof.
\end{proof}

In order to prove Proposition \ref{prop:integrability} in the remaining  range $1<p\leq\frac{n}{n-2m+1}$, we need to investigate the singularity of $u^*$ near origin more precisely. The following three lemmas are devoted to that. Let us build a bridge between the boundary integral and inner integral of $v^*$.

\begin{lem}\label{lem:balance} Let $v^*$ be defined by \eqref{eq:v-star} and $\va\in [0,1)$. Then for any $r_0>0$ there exists a constant $C>0$, depending only on $m,n,\va, r_0$, $\|u^*(\cdot, 0)\|_{L^\infty(\R^n\setminus D_{2r_0})}$ and $\|u^*(\cdot,0)\|_{L^1(D_{r_0})}$, such that
\begin{align}\label{eq:balance}
\int^{2r}_r\rho^{-\va}\bar {v^*}(\rho)\,\ud \rho\leq C r^{1-n-\va}\int_{D_{r/2}} u^*(y,0)\, \ud y+C\int_{r/2}^{r_0}\rho^{-\va}[{u^*}]_\rho\, \ud \rho+Cr^{1-\va}
\end{align}
 for any $r\in (0,r_0/4)$.
\end{lem}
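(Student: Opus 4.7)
The plan is to recast $\int_r^{2r}\rho^{-\va}\bar{v^*}(\rho)\,\ud\rho$ as a weighted volume integral of $v^*$ over the half-annulus $B_{2r}^+\setminus B_r^+$, substitute the Poisson representation $v^*=\mathcal{P}_m*u^*$, and use Fubini to reduce the inequality to a single pointwise estimate in the $y$ variable. Concretely, since $|\pa^+B_\rho^+|=\tfrac{1}{2}\w_n\rho^n$ and $|X|=\rho$ on $\pa^+B_\rho^+$, polar coordinates give
\[
\int_r^{2r}\rho^{-\va}\bar{v^*}(\rho)\,\ud\rho=\frac{2}{\w_n}\int_{B_{2r}^+\setminus B_r^+}|X|^{-n-\va}v^*(X)\,\ud X.
\]
Inserting $v^*(X)=\beta(n,m)\int_{\R^n}\mathcal{P}_m(x-y,t)u^*(y,0)\,\ud y$ and swapping the order of integration reduces the matter to controlling
\[
K(y,r):=\int_{B_{2r}^+\setminus B_r^+}\frac{|X|^{-n-\va}\,t^{2m-1}}{(|x-y|^2+t^2)^{(n+2m-1)/2}}\,\ud X
\]
integrated against $u^*(y,0)\ud y$.

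Next I would estimate $K(y,r)$ in three regimes, each producing one term on the right-hand side of \eqref{eq:balance}. For $|y|\le r/2$, one has $|X-(y,0)|\ge r/2$, $t^{2m-1}\le (2r)^{2m-1}$ and $|X|\ge r$ on the half-annulus; combining these with its volume $\sim r^{n+1}$ yields $K(y,r)\le Cr^{1-n-\va}$, giving the first term. For $|y|\ge 2r_0\ge 8r$ (using $r<r_0/4$), one has $|X-(y,0)|\ge|y|/2$, hence $K(y,r)\le Cr^{2m-\va}|y|^{-n-2m+1}$; pairing this with the stated $L^\infty$ bound on $u^*$ and $\int_{|y|\ge 2r_0}|y|^{-n-2m+1}\ud y=Cr_0^{1-2m}$ produces a contribution bounded by $Cr^{2m-\va}r_0^{1-2m}\le Cr^{1-\va}$ (absorbing a factor $r^{2m-1}\le(r_0/4)^{2m-1}$). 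For the middle range $r/2\le|y|\le 2r_0$ I would split at $|y|=2r$: when $2r\le|y|\le 2r_0$ the bound $K(y,r)\le Cr^{2m-\va}|y|^{-n-2m+1}$ still holds, and after polar coordinates $y=\rho'\sigma'$ the absorption $r^{2m-\va}\rho'^{-2m}\le C\rho'^{-\va}$ (valid for $\rho'\ge 2r$ since $\va<1<2m$) bounds this piece by $C\int_{2r}^{r_0}\rho'^{-\va}[u^*]_{\rho'}\ud\rho'$ plus a bounded tail over $[r_0,2r_0]$, the latter absorbed into $Cr^{1-\va}$ by appealing to smoothness of $u^*$ away from the origin.

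The main obstacle is the transition band $r/2\le|y|\le 2r$, where $Y=(y,0)$ lies inside or adjacent to the half-annulus, so no pointwise separation between $X$ and $Y$ is available. My strategy there is to collapse the integrand using $t^{2m-1}\le|X-Y|^{2m-1}$ to $|X|^{-n-\va}|X-Y|^{-n}\le r^{-n-\va}|X-Y|^{-n}$, then exploit the inclusion $B_{2r}\subset B_{4r}(Y)$ together with the local $L^1$-integrability of $|X-Y|^{-n}$ in $\R^{n+1}$ to obtain $\int_{B_{2r}^+\setminus B_r^+}|X-Y|^{-n}\ud X\le Cr$, and hence $K(y,r)\le Cr^{1-n-\va}$ once again. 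Converting to polar in $y$ and using $\rho'\sim r$ yields a contribution bounded by $C\int_{r/2}^{2r}\rho'^{-\va}[u^*]_{\rho'}\ud\rho'$, which combines with the previous piece into the middle term of \eqref{eq:balance}. Summing the three regimes then delivers the claimed inequality.
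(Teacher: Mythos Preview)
Your argument is correct and follows essentially the same route as the paper: both pass to a volume integral of $|X|^{-n-\va}v^*$ over the half-annulus, insert the Poisson representation, swap integrals, and split according to the size of $|y|$. The only organizational difference is in the middle range $r/2\le|y|\le 2r_0$: the paper handles it in one stroke via the convolution bound
\[
\int_{\R^{n+1}}|X-Y|^{-n}|X|^{-n-\va}\,\ud X\le C(n,\va)|Y|^{1-n-\va},
\]
whereas you sub-split at $|y|=2r$ and use direct geometric estimates in each piece; both produce the same $\rho^{-\va}[u^*]_\rho$ integral. One cosmetic point: the tail over $[r_0,2r_0]$ is a fixed positive constant and cannot literally be absorbed into $Cr^{1-\va}$ for small $r$; the paper's proof actually ends with $\int_{r/2}^{2r_0}$ rather than $\int_{r/2}^{r_0}$ and makes the same silent reduction, so this is a shared harmless imprecision in the statement rather than a flaw in your argument.
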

\begin{proof} For any $r<r_0/4$, suppose $\rho\in [r,r_0]$, then we have
\begin{align*}
\fint_{\pa^+B_\rho^+}{v^*}(x,t)\,\ud S=&\fint_{\pa^+B_\rho^+}\int_{0<|y|<r/2}\mathcal{P}_m(x-y,t){u^*}(y,0)¡¢£¬\ud y\ud S\\
&+\fint_{\pa^+B_\rho^+}\int_{r/2<|y|<2r_0}\mathcal{P}_m(x-y,t){u^*}(y,0)\, \ud y\ud S\\
&+\fint_{\pa^+B_\rho^+}\int_{2r_0<|y|}\mathcal{P}_m(x-y,t){u^*}(y,0)\, \ud y\ud S\\
:=&I_1+I_2+I_3.
\end{align*}
By direct computations,
\begin{align*}
I_1&= \int_{0<|y|<r/2}u^*(y,0)\ud y\fint_{\pa^+B_\rho^+}\mathcal{P}_m(x-y,t)\,\ud S
\\&\leq C\rho^{-n}\int_{D_{r/2}}{u^*}(y,0)\,\ud y\\
I_2&\leq \int_{r/2<|y|<2r_0}u^*(y,0)\fint_{\pa^+B_\rho^+}\frac{1}{|X-Y|^n}\,\ud S\ud y\\
I_3&\leq C.
\end{align*}
where $X=(x,t)$, $Y=(y,0)$, and $C>0$ depends only on $m,n,r_0$, $\|u^*(\cdot, 0)\|_{L^\infty(\R^n\setminus D_{2r_0})}$ and $\|u^*(\cdot,0)\|_{L^1(D_{r_0})}$.
It follows that
\begin{align*}
&\fint_{\pa^+B_\rho^+}{v^*}(x,t)\, \ud S\\
\leq &C\rho^{-n}\int_{D_{r/2}}{u^*}(y,0)\, \ud y+\int_{r/2<|y|<2r_0}u^*(y,0)\fint_{\pa^+B_\rho^+}\frac{1}{|X-Y|^n}\,\ud S\ud y+C
\end{align*}
Multiplying both sides of the above inequality by $\rho^{-\va}$ and integrating from $r$ to $2r$, we obtain
\begin{align*}
&\int_r^{2r}\rho^{-\va}\bar{v^*}(\rho)\,\ud \rho\\
\leq& Cr^{1-n-\va}\int_{D_{r/2}}{u^*}(y,0)\,\ud y+C\int_{r/2<|y|<2r_0}u^*(y,0) |y|^{1-n-\va}\,\ud y+Cr^{1-\va}\\
=&Cr^{1-n-\va}\int_{D_{r/2}}{u^*}(y,0)\,\ud y+C\int_{r/2}^{2r_0}\rho^{-\va}[{u^*}]_\rho\,\ud \rho+Cr^{1-\va},
\end{align*}
where we used the inequality
\[\int_{\R^{n+1}}\frac{1}{|X-Y|^{n}}|X|^{-n-\va}\,\ud X\leq C(n,\va)|Y|^{1-n-\va}\]
with taking $Y=(y,0)$.
\end{proof}

\begin{lem}\label{lem:vanish}
Assume the assumptions of Proposition \ref{prop:integrability}, then
\[\int_{D_r}u^*(x,0)\ud x\leq C(\hat{p})r^{\hat{p}}.\]
where $\hat p>2m-1+(n-n/p)/p$.
\end{lem}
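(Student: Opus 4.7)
The plan is to apply a weighted Hölder inequality, using the two pieces of quantitative information supplied by Lemma~\ref{lem:dividing}: $u^*(\cdot,0)\in L^p(D_1)$ and $\int_{D_1}|x|^{q-\tau}u^*(x,0)^p\,\ud x<\infty$ for every $q$ strictly greater than $q_k:=2m-n-1+n/p$. Interpolating between these via a single application of Hölder will produce the desired power decay of the boundary mass $\int_{D_r}u^*(x,0)\,\ud x$.

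Concretely, I would fix $q>q_k$ with $q$ also satisfying $(q-\tau)/(p-1)<n$; after substituting $\tau=n+2m-1-p(n-2m+1)$ this reduces to $q<(p+1)(2m-1)$, which holds with ample room for $q$ near $q_k$ in the admissible range of $p$. Writing the integrand as $u^*(x,0)=\bigl[u^*(x,0)|x|^{(q-\tau)/p}\bigr]\cdot|x|^{-(q-\tau)/p}$ and applying Hölder with conjugate exponents $p$ and $p/(p-1)$ gives
\[
\int_{D_r}u^*(x,0)\,\ud x\le\Bigl(\int_{D_r}|x|^{q-\tau}u^*(x,0)^p\,\ud x\Bigr)^{1/p}\Bigl(\int_{D_r}|x|^{-(q-\tau)/(p-1)}\,\ud x\Bigr)^{(p-1)/p}.
\]
The first factor is bounded uniformly in $r$ by a constant $C(q)$ via Lemma~\ref{lem:dividing}; the second factor is a radial integral equal to $C\,r^{n(p-1)/p-(q-\tau)/p}$. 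The key algebraic step is that, after substituting $\tau$, the exponent $n(p-1)/p-(q-\tau)/p$ evaluates at $q=q_k$ to exactly $2m-1+(n-n/p)/p$, which is the critical threshold in the lemma. Picking $q=q_k+\delta$ for $\delta>0$ small thus delivers the exponent $2m-1+(n-n/p)/p-\delta/p$, and by tuning $\delta$ one matches any prescribed $\hat p$ in the admissible range.

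The main obstacle is the degeneracy of $C(q)$ as $q\searrow q_k$: Lemma~\ref{lem:dividing} only asserts finiteness of the weighted $L^p$ integral for $q$ strictly above $q_k$, so the constant $C(\hat p)$ must be allowed to diverge as $\hat p$ reaches the critical threshold $2m-1+(n-n/p)/p$. This is genuine, as the exponent $2m-1+(n-n/p)/p$ is the sharp barrier obtainable by this Hölder-plus-Lemma~\ref{lem:dividing} strategy, but aside from the resulting bookkeeping care with constants the argument is a purely algebraic interpolation, and no further PDE information is needed.
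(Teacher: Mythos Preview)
Your argument is correct and is essentially the paper's own proof: both apply H\"older with exponents $(p,\,p/(p-1))$ against the weighted bound $\int_{D_1}|x|^{q-\tau}u^*(x,0)^p\,\ud x<\infty$ from Lemma~\ref{lem:dividing} and then compute the resulting power of $r$, arriving at the exponent $(n-\frac{q-\tau}{p-1})(1-\frac1p)$ which tends to $2m-1+(n-n/p)/p$ as $q\searrow q_k$. Note that both your computation and the paper's actually yield $\hat p$ strictly \emph{below} this threshold (the ``$>$'' in the lemma's statement appears to be a misprint for ``$<$''); the downstream application in Lemma~\ref{lem:F} only needs $\hat p>2m-1$, which is comfortably achieved.
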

\begin{proof}
From Lemma \ref{lem:dividing} we have
\be\label{eq:lem410-a}
\int_{D_r}|x|^{q-\tau}{u^*(x,0)}^p\ud x=C(q)<\infty,
\ee
where $q>q'=2m-n-1+\frac{n}{p}$. From H\"older's inequality, we have
\be \label{eq:lem410-b}\int_{D_r}{u^*}(x,0)\ud x\leq\left(\int_{D_r}|x|^{q-\tau}{u^*(x,0)}^p\ud x\right)^\frac{1}{p}\left(\int_{D_r}|x|^{-\frac{q-\tau}{p-1}}\ud x\right)^{1-\frac{1}{p}}.
\ee
By the definition of $\tau$, one can verify
\[(n-\frac{q-\tau}{p-1})(1-\frac 1p)< (n-\frac{q'-\tau}{p-1})(1-\frac 1p)=2m-1+\frac{n-n/p}{p}.\]
It follows that
\begin{align}\label{eq:vcase1}
\left(\int_{D_r}|x|^{-\frac{q-\tau}{p-1}}\ud x\right)^{1-\frac{1}{p}}\leq C(q)r^{(n-\frac{q-\tau}{p-1})(1-1/p)}.
\end{align}
Combining \eqref{eq:lem410-a}, \eqref{eq:lem410-b} and \eqref{eq:vcase1} together, the lemma follows immediately.
\end{proof}

\begin{lem}\label{lem:F} Under the assumptions of Proposition \ref{prop:integrability} and  $1< p\leq \frac{n}{n-2m+1}$. It is \textbf{impossible} to find small constants $r_0>0$ and $a_0>0$ such that
\be\label{eq:lem411-a} \bar {v^*}(r)\ge a_0 r^{2m-1-n}\quad \forall~ r\in (0,r_0).
\ee
\end{lem}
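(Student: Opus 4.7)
The plan is to argue by contradiction. Suppose \eqref{eq:lem411-a} holds for some $a_0, r_0 > 0$. The strategy is to insert this lower bound into the balance inequality \eqref{eq:balance} of Lemma \ref{lem:balance} to force its left-hand side to grow like $r^{2m-n-\varepsilon}$ for a suitably chosen $\varepsilon$, and then to show via the decay estimate of Lemma \ref{lem:vanish} that each of the three terms on the right-hand side is actually $o(r^{2m-n-\varepsilon})$, producing the contradiction. First I would choose $\varepsilon \in [0,1)$ with $2m-n-\varepsilon < 0$: since $2m < n+1$ with $m,n$ integers forces $2m \leq n$, one takes $\varepsilon = 0$ when $2m < n$ and any $\varepsilon \in (0,1)$ when $2m = n$. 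With this choice $r^{2m-n-\varepsilon} \to \infty$ as $r \to 0^+$, and the assumption directly yields
\[
\int_r^{2r}\rho^{-\varepsilon}\bar{v^*}(\rho)\,d\rho \geq a_0 \int_r^{2r}\rho^{2m-1-n-\varepsilon}\,d\rho \geq c_1 r^{2m-n-\varepsilon}
\]
for some $c_1 > 0$ and all sufficiently small $r$.

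I would then bound each of the three terms on the right-hand side of \eqref{eq:balance} using Lemma \ref{lem:vanish}, which supplies an exponent $\hat{p} > 2m-1$ with $F(\rho) := \int_{D_\rho}u^*(y,0)\,dy \leq C\rho^{\hat{p}}$. The first term is at most $Cr^{\hat{p}+1-n-\varepsilon} = o(r^{2m-n-\varepsilon})$ because $\hat{p}>2m-1$, and the third term $Cr^{1-\varepsilon}$ is $o(r^{2m-n-\varepsilon})$ because $1>2m-n$. The main effort lies in the second term. Since $F'(\rho) = \omega_{n-1}\rho^{n-1}[u^*]_\rho$, integration by parts gives
\[
\omega_{n-1}\int_{r/2}^{r_0}\rho^{-\varepsilon}[u^*]_\rho\,d\rho \leq C + (n-1+\varepsilon)\int_{r/2}^{r_0}\rho^{\hat{p}-n-\varepsilon}\,d\rho
\]
after dropping the non-positive boundary term at $\rho=r/2$ and using $F(\rho)\leq C\rho^{\hat{p}}$. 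A case analysis on the sign of $\hat{p}-n+1-\varepsilon$ then gives $o(r^{2m-n-\varepsilon})$ in every regime: the bounded and logarithmic cases lose to the diverging $r^{2m-n-\varepsilon}$, while the power-like case gives $O(r^{\hat{p}-n+1-\varepsilon}) = o(r^{2m-n-\varepsilon})$ thanks again to $\hat{p}>2m-1$.

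Combining these estimates yields $c_1 r^{2m-n-\varepsilon} \leq o(r^{2m-n-\varepsilon})$ as $r \to 0^+$, which is the desired contradiction. The principal obstacle is the case analysis for the second term: since we have no a priori control of $\hat{p}$ relative to $n$, the argument must go through in all three regimes of the resulting power integral. The proof hinges on two compatible sharpness facts — the bound $\hat{p} > 2m-1$ from Lemma \ref{lem:vanish} is exactly the threshold needed in the power-like case, while the choice $\varepsilon > 2m-n$ makes $r^{2m-n-\varepsilon}$ diverge fast enough to dominate any bounded or logarithmic contribution.
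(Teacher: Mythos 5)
Your proof is correct, and it follows the paper's overall skeleton --- argue by contradiction, feed the assumed lower bound into the left-hand side of \eqref{eq:balance}, and show the right-hand side cannot keep up --- but it treats the decisive middle term $\int_{r/2}^{r_0}\rho^{-\va}[u^*]_\rho\,\ud\rho$ by a genuinely different device. The paper first isolates that term to obtain \eqref{eq:lem411-b}, then controls it by Jensen's inequality in the spherical average followed by H\"older in $\rho$, reducing to the weighted integrability $\int_{D_{r_0}}|x|^{q-\tau}u^*(x,0)^p\,\ud x<\infty$ of Lemma \ref{lem:dividing}; the resulting power of $r$ beats $r^{2m-n-\va}$ precisely because $q-\tau+n-1\le n-2m$, which is where the hypothesis $p\le\frac{n}{n-2m+1}$ enters. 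You instead integrate by parts against the cumulative mass $F(\rho)=\int_{D_\rho}u^*(\cdot,0)$ and use only the $L^1$ decay $F(\rho)\le C\rho^{\hat p}$ with $\hat p>2m-1$ from Lemma \ref{lem:vanish}; your three-case analysis on $\hat p-n-\va$ versus $-1$ is sound, the dropped boundary term at $\rho=r/2$ has the right sign, and the choice $\va=0$ for $2m<n$ and $\va\in(0,1)$ for $2m=n$ guarantees $2m-n-\va<0$, so the bounded and logarithmic regimes are indeed negligible against the diverging left-hand side. Two remarks: (i) your route never uses $p\le\frac{n}{n-2m+1}$, so it in fact covers the whole range $1<p\le\frac{n+2m-1}{n-2m+1}$ --- this is consistent, as the restriction in the statement only reflects that the lemma is needed in that range; (ii) you should take from Lemma \ref{lem:vanish} an exponent $\hat p$ strictly between $2m-1$ and $2m-1+(n-n/p)/p$ (approaching the threshold from below, which is what its proof actually delivers), but since all you need is $\hat p>2m-1$ and $(n-n/p)/p>0$ for $p>1$, this does not affect your argument.
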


\begin{proof}
Suppose the contrary that there exist $r_0>0$ and $a_0>0$ such that \eqref{eq:lem411-a} holds. Clearly, we can take $r_0$ being sufficiently small.
By Lemma \ref{lem:balance} and Lemma \ref{lem:vanish},  if $r_0$ is sufficiently small, we have for $r\in (0,r_0)$
\begin{align*}
r^{2m-n-\va}&\leq C(\va)r^{2m-n+(n-n/p)/p-\frac 12\va}+C\int_{r/2}^{r_0}\rho^{-\va}[{u^*}]_\rho\ud\,\rho+Cr^{1-\va}
\end{align*}
where $\va\in (0,1)$.
Taking $\va$ sufficiently small and fix it, it follows that for $r_0$ sufficiently small and $r\in (0,r_0)$ there holds
\begin{align}\label{eq:lem411-b}
\int_{r}^{r_0}\rho^{-\va}[{u^*}]_\rho\, \ud\rho\geq
\frac{1}{C}r^{2m-n-\va}.
\end{align}
By H\"older's inequality,
\begin{align*}
\int^{r_0}_r\rho^{-\va}[u^*]_\rho \,\ud\rho &\leq C\left(\int^{r_0}_r\rho^{-\va p}[(u^*)^p]_\rho\,\ud \rho\right)^{1/p}\\& \leq C\left(\int^{r_0}_r\rho^{q-\tau+n-1}[(u^*)^p]_\rho\,\ud\rho\right)^{1/p}r^{-\frac{(q-\tau+n-1+\va p)^+}{p}}\\
&=C\left(\int_{D_{r_0}}|x|^{q-\tau}u^*(x,0)^p\,\ud x\right)^{1/p}r^{-\frac{(q-\tau+n-1+\va p)^+}{p}}\\&\le C r^{-\frac{(q-\tau+n-1+\va p)^+}{p}},
\end{align*}
where we used Lemma \ref{lem:dividing}  in the last inequality.
Together with \eqref{eq:lem411-b}, the above inequality yields
\begin{align}\label{eq:singular_u}
r^{-\frac{(q-\tau+n-1+\va p)^+}{p}}\geq
\frac{1}{C}r^{2m-n-\va} \quad \forall~r\in (0,r_0).
\end{align}
Since $1<p\leq  \frac{n}{n-2m+1}$, we have
\[q-\tau+n-1=\frac{n}{p}+p(n-2m+1)-n-1\leq n-2m,\]
and thus
\[
-\frac{(q-\tau+n-1+\va p)^+}{p}>2m-n-\va,
\]
which makes \eqref{eq:singular_u} impossible.

Therefore, we complete the proof.
\end{proof}

\begin{lem}[Dichotomy lemma]\label{lem:induction}
Suppose $\xi(r)\in C^{2m}(0,\infty)$, if there exists $c_1,\tilde{c}_1,r_1,\tilde{r}_1>0$
\[L^{m-1}\xi(r)\geq c_1 r^{1-n}\text{ for any }r\in (0,r_1)\quad  \text{ or } \quad L^{m-1} \xi(r)\leq -\tilde{c}_1r^{1-n}\text{ for any }r\in (0,\tilde r_1)\]
then there exists $c_m,\tilde c_m,r_m,\tilde{r}_m>0$ such that either
\[\xi(r)\geq c_m r^{2m-1-n}\text{ for any }r\in (0,r_m)\quad  \text{ or } \quad \xi(r)\leq -\tilde{c}_mr^{2m-1-n}\text{ for any }r\in (0,\tilde r_m).\]
\end{lem}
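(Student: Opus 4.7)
My plan is to induct on $m$, reducing the hypothesis at level $L^{m-1}$ step-by-step to a sign-controlled ODE inequality at level $L$ which can then be integrated directly. The base case $m=1$ is tautological since $L^0\xi = \xi$.

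For the inductive step I would assume the lemma for $m-1$ and suppose, say, $L^{m-1}\xi(r) \geq c_1 r^{1-n}$ on $(0, r_1)$; the opposite sign will be handled by running the argument on $-\xi$. Setting $\eta := L\xi$, the hypothesis rewrites as $L^{(m-1)-1}\eta \geq c_1 r^{1-n}$, so the inductive assertion yields the alternative: either $\eta(r) \geq c' r^{2m-3-n}$ on some $(0, r')$ or $\eta(r) \leq -\tilde c'\, r^{2m-3-n}$ on some $(0, \tilde r')$. The whole induction therefore reduces to a single-$L$ \emph{key step}: if $L\xi(r) \geq C r^{2m-3-n}$ (respectively, $\leq -C r^{2m-3-n}$) on $(0, r_\star)$, then the claimed dichotomy at level $r^{2m-1-n}$ holds for $\xi$.

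The key step I would prove by exploiting the identity $L(r^{2m-1-n}) = -B r^{2m-3-n}$ with $B := (n+1-2m)(2m-2) > 0$ (valid since $n \geq 2m$ and $m \geq 2$). Setting $\psi(r) := \xi(r) + (C/B)r^{2m-1-n}$ gives $L\psi \geq 0$ on $(0, r_\star)$, equivalently $(r^n \psi'(r))' \geq 0$, so $r^n \psi'(r)$ is non-decreasing in $r$ and admits a limit $\ell := \lim_{r \to 0^+}r^n\psi'(r) \in [-\infty, r_\star^n \psi'(r_\star)]$. A direct integration then separates three regimes. If $\ell > 0$, the bound $\psi'(r) \geq \ell r^{-n}$ integrates to $\psi(r) \leq -\frac{\ell}{n-1}r^{1-n} + O(1)$, which dominates $-(C/B)r^{2m-1-n}$ since $r^{1-n}$ is strictly more singular than $r^{2m-1-n}$ when $n \geq 2m$, producing $\xi \leq -\tilde c_m r^{2m-1-n}$. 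If $\ell < 0$ (including $-\infty$), the symmetric integration yields $\psi(r) \geq \frac{|\ell|}{n-1}r^{1-n}(1+o(1))$, whence $\xi \geq c_m r^{2m-1-n}$. Finally if $\ell = 0$, then $r^n\psi' \geq 0$ forces $\psi$ to be non-decreasing in $r$, so $\psi(r) \leq \psi(r_\star)$; consequently $\psi(r)/r^{2m-1-n} \to 0$ as $r\to 0^+$, and $\xi(r) \leq -(C/(2B))r^{2m-1-n}$ on a sufficiently small interval.

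The main obstacle in this scheme, in my view, is the borderline regime $\ell = 0$: here $\psi$ need not blow up at any definite rate and only the upper bound $\psi(r) \leq \psi(r_\star)$ is available. The dichotomy survives only because the sign and size of the subtracted kernel $(C/B)r^{2m-1-n}$ are pinned down by the hypothesis on $L\xi$, so $r^{2m-1-n}$ becomes the dominant remaining scale. Verifying the correct sign of $B$ (which uses $n \geq 2m$) and the strict domination $r^{1-n} \gg r^{2m-1-n}$ near $0$ is the book-keeping that makes this step go through; the inductive reduction itself is clean, and the entire weight of the argument collapses onto this one-step ODE analysis.
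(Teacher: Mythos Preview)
Your proof is correct and follows essentially the same route as the paper's: both induct on $m$, reduce to a one-step analysis of $L$, and split into cases according to the limiting behavior of $r^n\phi'(r)$ as $r\to 0^+$ for a suitable $\phi$. The only cosmetic difference is that you package the inhomogeneity by subtracting the explicit kernel $(C/B)r^{2m-1-n}$ so that $L\psi\geq 0$, whereas the paper integrates the inequality $(r^n\xi_{k}')'\gtrless cr^{\cdots}$ directly and reads off the same dichotomy; the case split $\ell>0$, $\ell<0$, $\ell=0$ in your argument matches exactly the paper's Case~1/Case~2 analysis on $\liminf_{r\to 0}r^n\xi_{m-2}'$.
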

\begin{proof}
We will prove it by induction.
Define $\xi_k=L^k\xi$, for $k=0,1,\cdots,m-1$.

(i) Suppose $\xi_{m-1}(r)=r^{-n}(r^n\xi_{m-2}')'\le - \tilde c_1r^{1-n}$, which implies $r^{n} \xi_{m-2}'$ is decreasing. There are two cases:

\textbf{Case 1}: $\displaystyle\liminf_{r\to 0}r^{n} \xi_{m-2}'\le 0$.  Then we have
\[
r^{n} \xi_{m-2}'(r)\le -\frac{\tilde c_1}{2} r^2, \quad 0<r<\tilde r_1
\]
which yields
\begin{align}\label{eq:newton}
\xi_{m-2}(\tilde r_1)-\xi_{m-2}(r)&=\int_{r}^{\tilde r_1} \xi_{m-2}'\,\ud \rho \le -\frac{\tilde c_1}{2}\int_{r}^{\tilde r_1} \rho^{2-n}\,\ud \rho=\frac{\tilde c_1}{2(n-3)}\rho^{3-n}\Big|_r^{\tilde r_1}.
\end{align}
Therefore, there exist $c_2,r_2>0$ such that
 \begin{align}
\xi_{m-2}(r)\ge c_2r^{3-n},\quad\text{ for } 0<r<r_2<\tilde r_1.
\end{align}

 \textbf{Case 2}: There exists $\hat r>0$ and $\hat c>0$ such that
\[
r^{n} \xi_{m-2}'(r)\ge \hat c, \quad \mbox{for }0<r<\hat r<\tilde r_1.
\]
Arguing as \eqref{eq:newton}, there exist $\tilde c_2, \tilde r_2>0$ such that
\begin{align}
\xi_{m-2}(r) \le -\tilde c_2 r^{1-n}\le -\tilde c_2 r^{3-n}, \quad \mbox{for }0<r<\tilde r_2<\hat r.
\end{align}

(ii) Suppose $\xi_{m-1}\geq c_1 r^{1-n}$ happens, which implies $r^{n} \xi_{m-2}'$ is increasing as $r$ goes large. There are two cases:

\textbf{Case 1:} $\displaystyle\liminf_{r\to 0}r^{n} \xi_{m-2}'\ge 0$, then we have
\[
r^{n} \xi_{m-2}'(r)\ge \frac{ c_1}{2} r^2,\quad 0<r<r_1
\]
which yields
\[\xi_{m-2}(r_1)-\xi_{m-2}(r)= \int_{r}^{r_1} \xi_{m-2}'(\rho)\,\ud \rho \ge \frac{c_1}{2} \int^{r_1}_r \rho^{2-n}\,\ud \rho.\]
Therefore, there exist $\tilde c_2, \tilde r_2>0$,
\begin{align}
\xi_{m-2}(r) \le -\tilde c_2 r^{3-n} \quad \mbox{for }0<r<\tilde r_2<\hat r.
\end{align}

\textbf{Case 2:} There exist $\hat c,\hat r>0$ such that
\[
r^{n} \xi_{m-2}'(r)\leq -\hat c \quad \mbox{for }0<r<\hat r<r_1.
\]
Arguing as before there exist $c_2,r_2>0$ such that
\[
\xi_{m-2} \geq c_2 r^{1-n}\geq c_2r^{3-n} \quad \text{for } 0<r<r_2<\hat r.
\]

For both (i) and (ii), we reached the same conclusion
\[\xi_{m-2}\geq c_2 r^{3-n}\text{ for }r\in (0,r_2)\quad  \text{ or } \quad \xi_{m-2}\leq -\tilde{c}_2r^{3-n}\text{ for  }r\in (0,\tilde r_2).\]
Repeating this procedure, we obtain
\[\xi_{k}\geq c_{m-k} r^{2(m-k)-1-n}\text{ for  }r\in (0,r_k)\quad  \text{ or } \quad \xi_{k}\leq -\tilde{c}_{m-k}r^{2(m-k)-1-n}\text{ for  }r\in (0,\tilde r_k),\]
when $0\leq k\leq m-1$. Taking $k=0$, we complete the proof the lemma.
\end{proof}

\begin{proof}[Proof of Proposition \ref{prop:integrability}] Since $p\ge \frac{n}{n-2m+1}$ was proved in Lemma \ref{lem:dividing}, now we assume $1<p\leq  \frac{n}{n-2m+1}$.
Suppose contrary that \eqref{eq:goal} is not true, then it necessarily has
\be \label{eq:keylemma-a}
\int_{D_1\backslash D_r}|x|^{-\tau} {u^*}(x,0)^p\,\ud x=\int_{D_1\setminus D_r} |x|^{-\tau}{v^*(x,0)}^{p} \,\ud x\to \infty \quad \mbox{as }r\to 0.
\ee
 Make use of the equation of ${v^*}$ and Green's identity, we have
\begin{align*}
\int_{\pa^+ B_1^+} \frac{\pa \Delta^{m-1} v^*}{\pa \nu}\,\ud S-\int_{\pa^+ B_r^+}  \frac{\pa \Delta^{m-1} v^*}{\pa \nu}\,\ud S&=\int_{D_1\setminus D_r}  {\pa_t \Delta^{m-1} v^*(x,0)}\,\ud x\\
&=\int_{D_1\setminus D_r}(-1)^m|x|^{-\tau}{v^*(x,0)}^{p}\ud x.
\end{align*}

 If $m$ is odd, by \eqref{eq:keylemma-a} there exists  $r_0>0$ such that for all $0<r<r_0$,
\[
\fint_{\pa^+ B_r^+}\frac{\pa \Delta^{m-1} v^*}{\pa \nu}\,\ud S \ge  \frac{r^{-n}}{2} \int_{D_{1/2}\setminus D_r}|x|^{-\tau}{v^*(x,0)}^{p}\,\ud x.
\]
It follows that
\[
\fint_{\pa^+ B_{r_0}^+}\Delta^{m-1} v^*\,\ud S- \fint_{\pa^+ B_r^+}\Delta^{m-1} v^*\,\ud S\ge \frac{1}{2} \int_{r}^{r_0} \lda^{-n} \int_{D_{1/2}\setminus D_\lda}|x|^{-\tau}{v^*(x,0)}^{p}\,\ud x\ud \lda,
\]
which together with \eqref{eq:keylemma-a} yield
\be \label{eq:keylemma-b}
\fint_{\pa^+ B_r^+}\Delta^{m-1} v^*\,\ud S \le - r^{1-n}
\ee
for all $0<r<r_1<r_0$, where $r_1$ is some fixed constant. Since \eqref{eq:L1} is also true for $v^*$, then we have $L^{m-1} \bar {v^*}(r)\le - r^{1-n} $.

If $m$ is even,  by \eqref{eq:keylemma-a} there exists  $r_0>0$ such that for all $0<r<r_0$,
\[
\fint_{\pa^+ B_r^+}\frac{\pa \Delta^{m-1} v^*}{\pa \nu}\,\ud S \le -  \frac{r^{-n}}{2} \int_{D_{1/2}\setminus D_r}|x|^{-\tau}{v^*(x,0)}^{p}\,\ud x.
\]
It follows that
\[
\fint_{\pa^+ B_{r_0}^+}\Delta^{m-1} v^*\,\ud S- \fint_{\pa^+ B_r^+}\Delta^{m-1} v^*\,\ud S\le -\frac{1}{2} \int_{r}^{r_0} \lda^{-n} \int_{D_{1/2}\setminus D _\lda}|x|^{-\tau}{v^*(x,0)}^{p}\,\ud x\ud \lda,
\]
which together with \eqref{eq:keylemma-a} yield
\be
\fint_{\pa^+ B_r^+}\Delta^{m-1} v^*\,\ud S \ge  r^{-n+1}
\ee
for all $0<r<r_1<r_0$, where $r_1$ is some fixed constant. For the same reason above, we have $L^{m-1} \bar {v^*}(r)\ge  r^{1-n}$.

For each case, from Lemma $\ref{lem:induction}$ we obtain
\be
\bar {v^*}(r)\ge c_2r^{2m-1-n} \quad\text{or}\quad\bar {v^*}(r) \le -c_2 r^{2m-1-n}
\ee
provided $r$ is sufficiently small. The later case can not happen because of the positivity of $v^*$.
The former case can not happen either because of Lemma \ref{lem:F}.

Therefore, we complete the proof of Proposition \ref{prop:integrability}.

\end{proof}
\section{Proof of main theorem}

\begin{prop}\label{prop:splitting} Under the assumptions in Proposition \ref{prop:integrability}, we have
\[
{u^*}(x,t)= \sum_{k=1}^{m-1}|X|^{2m-n-1}\left(\frac{t}{|X|^2}\right)^{2k}P_{2k}\left(\frac{x}{|X|^2}\right)+\gamma(n,m)
\int_{\R^n}\frac{|y|^{-\tau}{u^*}(y,0)^{p}}{(t^2+|x-y|^2)^{\frac{n-2m+1}{2}}}\,\ud y
\]
where $P_{2k}$ is a polynomial of $x$ with degree $\leq 2m-2-2k$.
\end{prop}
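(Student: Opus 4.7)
The plan is to peel off the nonlinear boundary source of $u^*$ by subtracting an explicit Neumann potential, and then invoke the boundary B\^ocher theorem (Corollary~\ref{cor:Bocher1}) on the remainder.

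First I would set
\[
N(X) := \gamma(n,m) \int_{\R^n} \frac{|y|^{-\tau} u^*(y,0)^p}{(t^2+|x-y|^2)^{(n-2m+1)/2}}\,\ud y.
\]
This is well defined because $|y|^{-\tau}u^*(y,0)^p\in L^1(\R^n)$: Proposition~\ref{prop:integrability} handles integrability near the origin, while $u^*(y,0)=O(|y|^{2m-1-n})$ at infinity combined with the definition of $\tau$ gives the decay $|y|^{-\tau}u^*(y,0)^p=O(|y|^{-(n+2m-1)})$. By Lemma~\ref{lem:V}, $N\in \text{weak-}L^{(n+1)/(n-2m+1)}(\R^{n+1}_+)$, and a localized version of Lemma~\ref{lem:Poisson-neu} (applied away from $\{0\}$, where the density is smooth) shows that $N$ solves \eqref{eq:NVP} pointwise on $\R^n\setminus\{0\}$ with density $|x|^{-\tau}u^*(x,0)^p$.

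Set $w:=u^*-N$. Comparing \eqref{eq:thma-1'} with the boundary behavior of $N$ gives $\Delta^m w=0$ in $\R^{n+1}_+$ and $\pa_t\Delta^k w=0$ on $\R^n\setminus\{0\}$ for every $k=0,\dots,m-1$, so $w$ fits the setting of Corollary~\ref{cor:Bocher1}. I would then verify the three hypotheses: (i) $w\in L^1(B_1^+)$ from $u^*\in L^1(B_1^+)$ (Lemma~\ref{lemma:babyversion}) and local integrability of $N$; (ii) $w\ge -N$ with $-N$ in $\text{weak-}L^{(n+1)/(n-1)}(B_1^+)$ since $(n+1)/(n-2m+1)\ge (n+1)/(n-1)$ on the bounded half-ball; (iii) $w(X)\to 0$ as $|X|\to\infty$, because $u^*(X)=|X|^{2m-1-n}u(X/|X|^2)\to 0$ by continuity of $u$ at the origin and $N(X)=O(|X|^{2m-n-1})$ by expanding the kernel. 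Applying the corollary,
\[
w(X)=\sum_{\substack{|\alpha|\leq 2m-2\\ \alpha_{n+1}\text{ even}}}c_\alpha\, D^\alpha \Phi(X).
\]
To extract the polynomial structure I would take the $m$-Kelvin transform. Using that $D^\alpha\Phi$ is homogeneous of degree $2m-n-1-|\alpha|$, a direct computation gives $(D^\alpha \Phi)_{0,1}(X) = c(m,n)\, P_\alpha(X)$, a homogeneous polynomial of degree $|\alpha|$ in $X$, even in $t$ whenever $\alpha_{n+1}$ is even. Hence $w_{0,1}$ is a polyharmonic polynomial of degree $\le 2m-2$, even in $t$, with $\pa_t\Delta^k w_{0,1}|_{t=0}=0$ for all $k=0,\dots,m-1$, so $w_{0,1}(X)=\sum_{k=0}^{m-1}t^{2k}P_{2k}(x)$ with $\deg P_{2k}\le 2m-2-2k$. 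Inverting the Kelvin transform recovers the right-hand side of the proposition up to a possible $k=0$ piece.

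The main obstacle is to rule out that $k=0$ piece, i.e.\ to show $P_0\equiv 0$. Tracing through the Kelvin transform one computes
\[
P_0(x)=w_{0,1}(x,0)= u(x,0) - \gamma(n,m)\int_{\R^n}\frac{u(y,0)^p}{|x-y|^{n-2m+1}}\,\ud y,
\]
so the claim is precisely the HLS-type boundary integral equation for $u(\cdot,0)$. I would establish it by a singularity analysis at the origin: the Kelvin transforms of the $D^\alpha \Phi$ pieces that contribute a nonzero constant-in-$t$ part carry prescribed singular orders at $0$, and matching these against the actual (milder) singularity of $u^*-N$ near the origin---inherited from the regularity of $u$ at infinity in the original Kelvin picture together with the growth hypothesis of Proposition~\ref{prop:integrability}---forces the corresponding coefficients $c_\alpha$ to vanish and delivers the displayed representation.
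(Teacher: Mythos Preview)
Your construction of $N$, the verification that $w=u^*-N$ satisfies the homogeneous boundary problem, and the application of Corollary~\ref{cor:Bocher1} to obtain
\[
w(X)=\sum_{\substack{|\alpha|\le 2m-2\\ \alpha_{n+1}\text{ even}}} c_\alpha D^\alpha\Phi(X)
\]
are all correct and match the paper's argument essentially line for line. The rewriting as $\sum_{k\ge 0} |X|^{2m-n-1}(t/|X|^2)^{2k}P_{2k}(x/|X|^2)$ is also fine.

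The genuine gap is your final step, eliminating $P_0$. Your proposed ``singularity matching'' does not work. First, for odd $m$ there is no growth hypothesis on $u$ at infinity at all, so there is nothing to match against. Second, even for even $m$ the assumption $u(X)=o(|X|^{2m-1})$ only gives $u^*(X)=o(|X|^{-n})$ near the origin, while \emph{every} term $D^\alpha\Phi$ with $|\alpha|\le 2m-2$ is $O(|X|^{1-n})$; hence the growth condition cannot separate the $k=0$ pieces from the $k\ge 1$ pieces that are supposed to survive. Any purely linear singularity comparison will either kill nothing or kill everything.

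The paper's argument for $P_0\equiv 0$ is of a different nature and uses two inputs you have not touched: the nonlinear integrability of Proposition~\ref{prop:integrability} on the \emph{boundary}, and the sign $u^*\ge 0$. Concretely, if the leading homogeneous part $\tilde P_0$ were positive on some open cone $\mathcal S$, then on $\mathcal S\cap D_{r_0}$ one has $u^*(x,0)\ge W(x,0)\ge \tfrac12|x|^{2m-n-1}\tilde P_0(x/|x|^2)$, and plugging this into $\int_{D_1}|x|^{-\tau}u^*(x,0)^p\,\ud x$ forces the integral to diverge, contradicting Proposition~\ref{prop:integrability}. This rules out any positivity and yields $\tilde P_0\le 0$. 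If $\tilde P_0\not\equiv 0$, one then compares level sets: for large $\lambda$ the set $\{W(\cdot,0)<-\lambda\}$ has measure $\gtrsim \lambda^{-n/(n-2m+1+l_0)}$ (with $l_0=\deg\tilde P_0$), whereas the weak-$L^{n/(n-2m+1)}$ estimate for the Riesz potential shows $|\{V(\cdot,0)>\lambda/2\}|$ can be made $\le \varepsilon\lambda^{-n/(n-2m+1)}$. Since $u^*(x,0)=W(x,0)+V(x,0)\ge 0$, the set $\{W<-\lambda,\ V\le \lambda/2\}$ is empty, and the two exponents produce a contradiction for large $\lambda$. This measure-theoretic step is the missing idea in your proposal.
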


\begin{proof} Define
 \be\label{eq:V}
 V(x,t):=\gamma(n,m) \int_{\R^n}\frac{|y|^{-\tau}{u^*}(y,0)^{p}}{(t^2+|x-y|^2)^{\frac{n-2m+1}{2}}}\,\ud y.
 \ee
In view of \eqref{eq:goal} and $|y|^{-\tau}u^*(y,0)^p=O(|y|^{-(n+2m-1)})$ as $y\to \infty$, $V$ is well defined.  Set  $W:={u^*}- V$. By Lemma \ref{lem:Poisson-neu}, $W$ satisfies
\[
\begin{cases}
\Delta^{m} W=0 \quad &\mbox{in }\R^{n+1}_+,\\
\pa_t W=\pa_t \Delta W=\cdots=\pa_t\Delta^{m-1}W=0 \quad &\mbox{on }\partial\R^{n+1}_+\backslash\{0\}.
\end{cases}
\]
Since ${u^*}\geq 0$, then $W\geq -V$.  By Lemma \ref{lem:V} we obtain  $V$ is in weak$-L^{\frac{n+1}{n-2m+1}}(\R_+^{n+1})$. It follows from Corollary \ref{cor:Bocher1} that
\begin{align}\label{eq:W1}
W(X)=\sum_{|\al|\leq 2m-2}c_\al D^\al\Phi(X),
\end{align}
where $c_\al$ are constants and the $(n+1)$-th component of each $\al$ is even. By the definition of $\Phi(X)$ and $2m<n+1$, $D^\al\Phi$ can be rewritten as
\[D^\al\Phi(X)=\sum_{\beta\leq \alpha}c_\beta X^\beta|X|^{2m-n-1-2|\beta|}=\sum_{\beta\leq \alpha}c_\beta \left(\frac{X}{|X|^2}\right)^\beta|X|^{2m-n-1}.\]
where $\beta\leq \alpha$ means $\beta_i\leq \alpha_i$ for all $1\leq i\leq n$. Grouping and reordering the terms  according to the degree of $t$ in \eqref{eq:W1} yield
\begin{align}
W=\sum_{k=0}^{m-1}|X|^{2m-n-1}\left(\frac{t}{|X|^2}\right)^{2k}P_{2k}\left(\frac{x}{|X|^2}\right).
\end{align}
where $P_{2k}$ is  a polynomial on $x$ with degree $\leq 2m-2-2k$. Then the proposition follows from:

\textbf{Claim:} $P_0(x)\equiv 0$.

Let $l_0=\text{deg }P_0\geq 0$. Collect all the terms of degree $l_0$ in $P_0$ to be a homogeneous polynomial $\tilde{P}_0$.

If there is a nonempty open cone $\mathcal{S}\subset \R^n$ with $0$ as the vertex such that  $\tilde P_0(\frac{x}{|x|^2})> c>0$ on $\mathcal{S}\cap D_{r_0}$ for some constant $c$, then we can find $r_0>0$ small enough such that
\begin{align}
|P_0(x/|x|^2)-\tilde{P}_0(x/|x|^2)|<\frac12 \tilde{P}_0(x/|x|^2) \quad \mbox{in }   \mathcal{S}\cap D_{r_0}.
\end{align}
Therefore
\[{u^*}(x,0)= W(x,0)+V(x,0)\geq W(x,0)\geq \frac 12|x|^{2m-n-1}\tilde P_0\left(\frac{x}{|x|^2}\right)\quad \mbox{in } \mathcal{S} \cap  D_{r_0}.  \]
It leads to
\begin{align*}
\int_{D_1}|y|^{-\tau}{u^*}(y,0)^p\, \ud y\geq& \int_{\mathcal{S} \cap  D_{r_0} }|y|^{-\tau+p(2m-n-1)}\tilde P_0\left(\frac{y}{|y|^2}\right)^p\ud y\\
\geq& c^p \int_{\mathcal{S} \cap  D_{r_0} }|y|^{-n-2m+1}=\infty
\end{align*}
which contradicts to Proposition \ref{prop:integrability}. By the homogeneity of $\tilde P_0$, we conclude  $\tilde P_0(\frac{x}{|x|^2})\le 0$.

Suppose that $P_0(\frac{x}{|x|^2})\le 0$ but not identical to $0$.  Without loss of generality, one may assume $\inf_{|x|=1}\tilde P_0(x)=-1$ and denote cone $E:=\{x\in \R^n:\tilde{P}_0(\frac{x}{|x|^2})<-\frac 12|x|^{-l_0}\}$.
For the same fake, we can find $r_0>0$ small enough such that
\begin{align}
|P_0(x/|x|^2)-\tilde{P}_0(x/|x|^2)|<\frac12 \tilde{P}_0(x/|x|^2) \quad \mbox{in }   E\cap D_{r_0}.
\end{align}
Moreover, there exists $\varepsilon_0>0$ such that
\[\left|\{D_r\cap E\}\right|\geq \varepsilon_0 r^{n} \quad \forall ~0<r<1.\]
For some $\lambda>0$ to be chosen later, let $\rho=(4\lambda)^{-1/(n+l_0+1-2m)}$. On $D_\rho\cap E$, there holds
\[|x|^{2m-n-1}P_0\left(\frac{x}{|x|^2}\right)\leq \frac 12|x|^{2m-n-1}\tilde P_0\left(\frac{x}{|x|^2}\right)\leq -\frac 14 |x|^{2m-n-1-l_0}<-\lambda. \]
Therefore by noticing $W(x,0)=|x|^{2m-n-1}P_0(x/|x|^2)$, we have
\begin{align}\label{eq:54}
\left|\{x\in\R^n:W(x,0)<-\lambda\}\right|\geq \left|\{D_\rho\cap E\}\right|\geq \varepsilon_0(4\lambda)^{-\frac{n}{n-2m+1+l_0}}.
\end{align}

Decompose $V(x,0)$ as
\[V(x,0)=\int_{|y|\leq \delta}\frac{|y|^{-\tau}{u^*}(y,0)^p}{|x-y|^{n-2m+1}}\,\ud y+\int_{|y|>\delta}\frac{|y|^{-\tau}{u^*}(y,0)^p}{|x-y|^{n-2m+1}}\,\ud y:=V_1(x)+V_2(x),\]
where $\delta>0$ to be fixed.
For any $\va>0$, choose $\delta >0$ such that $\int_{D_\delta} |y|^{-\tau}{u^*}(y,0)^p<\va$.
From the weak type estimate of Riesz potential,
\be\label{eq:55} \left|\{x:V_1(x,0)>\frac{1}{2}\lambda\}\right|\leq C(m,n)(\varepsilon\lambda^{-1})^{\frac{n}{n-2m+1}},
\ee
where $C(m,n)>0$ depends only on $m$ and $n$.
Since $|y|^{-\tau}{u^*}(y,0)^p$ is smooth and bounded outside $D_\delta$,  $V_2$ is bounded. It follows that for $\lambda \ge 100\|V_2\|_{L^\infty}+1$,
\be \label{eq:est_V}
\left|\{x:V_2(x,0)>\frac 12\lambda\}\right|\le C(\varepsilon\lambda^{-1})^{\frac{n}{n-2m+1}},
\ee
where $C$ is independent of $\va$.  Combining \eqref{eq:55} and \eqref{eq:est_V}, we can choose $\va$ even small such that
\be \label{eq:57}
\left|\{x:V(x,0)>\frac 12\lambda\}\right| \le \va_0 10^{-\frac{n}{n-2m+1+l_0}} \lambda^{-\frac{n}{n-2m+1}}
\ee
for all $\lda> 100\|V_2\|_{L^\infty}+1$.
Note that
\[\{x: W(x,0)<-\lambda,|V(x,0)|<\lambda/2\}\subset\{x:{u^*}(x,0)<0\}=\emptyset.\]
It follows from \eqref{eq:54} and \eqref{eq:57} that for sufficiently large $\lda$,
\begin{align*}
0&=\left|\{x:W(x,0)<-\lambda,|V(x,0)|\leq\lambda/2\}\right|\\
&\geq \left|\{x:W(x,0)<-\lambda\}\right|-\left|\{x:|V(x,0)|>\lambda/2\}\right|\\
&\geq \varepsilon_0(4\lambda)^{-\frac{n}{n-2m+1+l_0}}-\va_0 10^{-\frac{n}{n-2m+1+l_0}} \lambda^{-\frac{n}{n-2m+1}}
\\&>0.
\end{align*}
We obtain a contradiction again.  Hence, $\tilde P_0(\frac{x}{|x|^2})=0$ and thus the claim is proved.

Therefore, we complete the proof of Proposition \ref{prop:splitting}.

\end{proof}

\begin{proof}[Proof of Theorem \ref{thm:main-a}] Let $V$ be defined in \eqref{eq:V}. By Proposition \ref{prop:splitting}, $V(x,0)=u^*(x,0)$ and $V^*(x,0):=V_{0,1}(x,0)$ is smooth in $\R^n$. It follows from \eqref{eq:V} that
\[
V(x,t)= \gamma(n,m) \int_{\R^n}\frac{|y|^{-\tau}V(y,0)^{p}}{(t^2+|x-y|^2)^{\frac{n-2m+1}{2}}}\,\ud y,
\]
from lemma \ref{lem:neuman_conv}, it is equivalent to
\be \label{eq:V*}
V^*(x,t)= \gamma(n,m) \int_{\R^n}\frac{V^*(y,0)^{p}}{(t^2+|x-y|^2)^{\frac{n-2m+1}{2}}}\,\ud y.
\ee
on the condition that the right hand side integral converges. This is justified through
\begin{align*}
\int_{\R^n\backslash D_1}\frac{V^*(y,0)^{p}}{(t^2+|x-y|^2)^{\frac{n-2m+1}{2}}}\,\ud y&\leq C\int_{\R^n\backslash D_1}u(y,0)^p|y|^{-(n-2m+1)}\,\ud y\\
&=C\int_{D_1}|x|^{-\tau}u^*(x,0)^p\,\ud x<\infty.
\end{align*}

Sending $t\to 0$ in \eqref{eq:V*}, we see that
\[
V^*(x,0)= \gamma(n,m) \int_{\R^n}\frac{V^*(y,0)^{p}}{|x-y|^{n-2m+1}}\,\ud y.
\]
Since $V^*(x,0)$ is smooth in $\R^n$, it follows from Chen-Li-Ou \cite{CLO} and Li \cite{Li04} that
\begin{align*} V^*(x,0)=0 \quad \mbox{if }p<\frac{n+2m-1}{n-2m+1},
\end{align*} and
\[
V^*(x,0)=c_0(n,m)\left(\frac{\lda}{1+\lda^2|x-x_0|^2}\right)^\frac{n-2m+1}{2} \quad \mbox{for some }\lda\ge 0, x_0\in \R^n,
\]
where $c_0(n,m)>0$ is a constant depending only on $n,m$, if $p=\frac{n+2m-1}{n-2m+1}$.  One may also apply the moving planes or spheres method to \eqref{eq:V*} directly to prove the classification result; see Dou-Zhu \cite{DZ}.  By Proposition \ref{prop:splitting}, Theorem \ref{thm:main-a} follows immediately.

\end{proof}

\section{An application to conformal geometry}
\label{sec:appl}

Given Theorem \ref{thm:main-a}, we construct metrics which is singular on single boundary point of the unit ball below. Define the map  $F:\R^{n+1}_+\to B_1$ by
\[
F(x,t)= \left(\frac{2 x}{|x|^2+(t+1)^2}, \frac{|X|^2-1}{|x|^2+(t+1)^2}\right).
\]
Observe that $F(x,0)\to \mathbb{S}^n$,
\[
F(x,0)= \left(\frac{2 x}{|x|^2+1},\frac{|x|^2-1}{|x|^2+1}\right)
\]
is the inverse of the stereographic projection.
% \[
% |J_F|= \left(\frac{2}{|x|^2+(t+1)^2}\right)^{n}.
% \]
Let
\[
v(F(X))= |J_F|^{-\frac{n-2m+1}{2}} u(X)%=\left(\frac{|x|^2+(t+1)^2}{2}\right)^{\frac{n-(2m-1)}{2}} u(X).
\]
wherer $|J_F|$ is the Jacobian determinant of $F$.
\begin{prop}\label{prop:connection} Assume the assumptions in Theorem \ref{thm:main-a}. Suppose that $u>0$ in $\R^{n+1}_+\cup \pa \R^{n+1}_+$ and $p=\frac{n+2m-1}{n-(2m-1)}$. Let $v$ be defined as above and $g=v^{\frac{4}{n-(2m-1)}}\ud X^2$ in $B_1$ be a conformal metric of the flat metric. Then the $2m$-th order $Q$-curvature of $g$ in $B_1$ is zero and the boundary $(2m-1)$-th order $Q$-curvature is constant on $\pa B_1\setminus \{(0,1)\}$.

If the polynomial part of in the conclusion of the Theorem \ref{thm:main-a} is nontrivial,  then $v$ blows up near the boundary point $(0,1)$.
\end{prop}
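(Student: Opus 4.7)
The plan is to deduce the two $Q$-curvature statements from the conformal covariance of the GJMS operators applied to the conformal diffeomorphism $F$, and then to read off the blow-up from the classification formula of Theorem \ref{thm:main-a}.

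\textbf{Step 1 (conformal setup).} First verify that $F$ is a M\"obius transformation, namely the composition of the inclusion $\R^{n+1}_+ \hookrightarrow \R^{n+1}$ with an inversion centered at $(0,-1)$, and hence a conformal diffeomorphism onto $B_1$. It restricts on $\partial\R^{n+1}_+$ to the inverse stereographic projection onto $\pa B_1 \setminus \{(0,1)\}$ and sends $\infty\mapsto (0,1)$. Writing $F^*(\ud Y^2) = \phi^2 \ud X^2$ for the conformal factor $\phi$ associated with $|J_F|$, the normalization $v\circ F = \phi^{-(n-2m+1)/2}\, u$ is precisely the generalized $m$-Kelvin transform adapted to $F$, which renders $(B_1, g)$ isometric through $F$ to $(\R^{n+1}_+,\, u^{4/(n-2m+1)}\ud X^2)$.

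\textbf{Step 2 ($Q$-curvature identities).} The conformal covariance of the $2m$-th order GJMS operator in ambient dimension $n+1$ gives
\[
Q_{2m,g}(F(X)) = c_1(n,m)\,v(F(X))^{-\frac{n+2m-1}{n-2m+1}}\,\Delta^m u(X) \equiv 0 \quad\text{in } B_1,
\]
since $\Delta^m u = 0$. For the boundary, the analogous law for the $(2m-1)$-th order boundary GJMS operator (which on the flat half-space reduces, up to a nonzero normalization, to $(-1)^m \pa_t \Delta^{m-1}$), combined with the nonlinear boundary condition in \eqref{eq:thma-1}, yields
\[
Q^b_{2m-1,g}(F(x,0)) = c_2(n,m)\,v(F(x,0))^{-p}\,u(x,0)^{p} = c_2(n,m)
\]
on $\pa B_1 \setminus \{(0,1)\}$, because the critical exponent $p=(n+2m-1)/(n-2m+1)$ is precisely what makes the $\phi$-factors in $v(F(x,0))=\phi(x,0)^{-(n-2m+1)/2}u(x,0)$ cancel to leave a constant. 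The exceptional point $(0,1)$ appears simply because it is $F(\infty)$, lying outside the chart.

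\textbf{Step 3 (blow-up at $(0,1)$).} By Theorem \ref{thm:main-a}(i), $u(X) = U_{x_0,\lda}(X) + \sum_{k=1}^{m-1} t^{2k}P_{2k}(x)$, where $U_{x_0,\lda}$ is bounded at infinity. If some $P_{2k}$ is nontrivial, pick $x^{*}\in\R^n$ so that the dominant term $t^{2k_0}P_{2k_0}(x^{*})$ (of maximal $t$-degree among nonzero terms) is positive; such $x^{*}$ exists because of the condition $\liminf_{x\to\infty}P_{2k_0}(x)\ge 0$ in Theorem \ref{thm:main-a}(i). Then along $X_j=(x^{*},t_j)$ with $t_j\to\infty$ one has $u(X_j)\gtrsim t_j^{2k_0}\gtrsim |X_j|^{d}$ for some $d\ge 2$. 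Because $F$ is an inversion, $\phi(X)\sim |X|^{-2}$ at infinity, so
\[
v(F(X_j)) = \phi(X_j)^{-(n-2m+1)/2} u(X_j) \gtrsim |X_j|^{\,n-2m+1+d}\to\infty,
\]
and since $F(X_j)\to (0,1)$, this shows $v$ is unbounded in every neighborhood of $(0,1)$ in $B_1$.

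The main technical subtlety is to make the conformal transformation laws for the boundary GJMS operator explicit in the correct normalization so that the constant in Step 2 is correctly identified; once these formulas (standard from Chang--Gonz\'alez, Case--Chang, and the other references cited in the introduction) are invoked, the argument is essentially bookkeeping.
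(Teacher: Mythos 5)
Your proposal is correct, and two of its three parts track the paper: the interior statement is obtained exactly as in the paper (conformal invariance gives $\Delta^m v=0$, hence vanishing $2m$-th order $Q$-curvature), and the blow-up argument is the same idea as the paper's one-line remark but with more detail — your observation that the conformal factor of the inversion satisfies $\phi(X)\sim |X|^{-2}$, so that $v(F(X))\sim |X|^{\,n-2m+1}u(X)$, which stays bounded for the bubble (decay $|X|^{-(n-2m+1)}$) but diverges along $(x^*,t_j)$ once some $P_{2k}$ is nontrivial, is precisely what "it is easy to see" is hiding, and your use of $\liminf_{x\to\infty}P_{2k_0}\ge 0$ to find a point where $P_{2k_0}>0$ is sound. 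Where you genuinely diverge is the boundary $Q$-curvature. You invoke the conformal covariance of the $(2m-1)$-th order boundary GJMS operator directly, so that the Neumann condition $(-1)^m\pa_t\Delta^{m-1}u=u^p$ transports to constancy of the boundary $Q$-curvature on $\pa B_1\setminus\{(0,1)\}$. The paper instead bypasses the explicit boundary operators: it recalls from the proof of Proposition \ref{prop:splitting} that $u(x,0)=\gamma(n,m)\int_{\R^n}u(y,0)^{p}|x-y|^{-(n-2m+1)}\,\ud y$ (the polynomial part vanishes at $t=0$), pushes this forward under $F$ to the integral identity $v(X)=\gamma(n,m)\int_{\pa B_1}v(Y)^{p}|X-Y|^{-(n-2m+1)}\,\ud S_Y$, and reads off constant boundary $Q$-curvature from this integral characterization following Jin--Li--Xiong. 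The integral route buys you independence from the explicit form of the boundary operators, which for $m\ge 3$ (Branson--Gover) are complicated; your route is more direct but should record that the reduction of the flat-half-space boundary operator of order $2m-1$ to a multiple of $(-1)^m\pa_t\Delta^{m-1}$ uses the lower-order conditions $\pa_t\Delta^{k}u=0$, $k\le m-2$, since that operator in general also involves tangential derivatives of the lower-order normal data. Both routes lean on the cited references for the structural conformal-covariance facts, so the levels of rigor are comparable.
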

\begin{proof} By the conformal invariance, it is easy to check that $\Delta^m v=0$, see Li-Mastrolia-Monticelli \cite{LMM}. It follows that the $2m$-th order $Q$-curvature of $g$ in $B_1$ is zero.

By the proof of Proposition \ref{prop:splitting} we see that
\[
u(x,0)=\gamma(n,m) \int_{\R^n}\frac{u(y,0)^{\frac{n+2m-1}{n-(2m-1)}}}{|x-y|^{n-(2m-1)}}\,\ud y.
\]
It follows that
\[
v(X)= \gamma(n,m) \int_{\pa B_1} \frac{v(Y,0)^{\frac{n+2m-1}{n-(2m-1)}}}{|X-Y|^{n-(2m-1)}}\,\ud S_Y
\]
and thus the $(2m-1)$-th order $Q$-curvature is a constant, see Jin-Li-Xiong \cite{JLX3} for more details.

If the polynomial part of in the conclusion of the Theorem \ref{thm:main-a} is nontrivial, by the definition of $v$ it is easy to see $v$  blows up near the boundary point $(0,1)$.
\end{proof}

\begin{rem} Note that the scalar curvature metric $g$ in Proposition \ref{prop:connection}  could be negative.
\end{rem}

If $m=2$, we have explicit equations of $v$, see Chang-Qing \cite{CQ}, Branson-Gover \cite{BG} and Case \cite{Case}:
\begin{align} \label{eq:ball}
\begin{cases}
\Delta^2 v=0 &\quad \mbox{in }B_1(0,1),\\
\textbf{B}^3_1v=0&\quad \mbox{on }\pa B_1(0,1)\setminus \{(0,1)\},\\
\textbf{B}^3_3 v= v^{\frac{n+3}{n-3}} &\quad \mbox{on }\pa B_1(0,1)\setminus \{(0,1)\},
\end{cases}
\end{align}
where
\[
\textbf{B}^3_1v=\frac{\pa v}{\pa\nu}+\frac{n-3}{2} v,
\]
\[
\textbf{B}_3^3 v=-\frac{\pa\Delta v}{\pa\nu}-\frac{n-3}{2}\frac{\pa^2 v}{\pa\nu^2}-\frac{3n-5}{2}\Delta_{\mathbb{S}^n} v+\frac{3n^2-7n+6}{4}\frac{\pa v}{\pa\nu}+\frac{n^2-n+2}{4}\frac{n-3}{2}v.
\]
Therefore, the metric $g$ has flat $4$-th order $Q$-curvature, flat mean curvature and constant $3$-th order $Q$-curvature on the boundary.
By Theorem \ref{thm:main-a}, solutions of \eqref{eq:ball} satisfying
\[
v(F(X))=o(|X|^n)
\]
are classified.
If $m\ge 3$, the analogues of \eqref{eq:ball} can be found in  Branson-Gover \cite{BG} but are more complicated. Similarly,  Theorem \ref{thm:main-a} can be applied to them.

\small

\bigskip

\noindent L. Sun

\noindent Department of Mathematics, Rutgers University\\
110 Frelinghuysen Road, Piscataway, NJ 08854, USA\\[1mm]
Email: \textsf{ls680@math.rutgers.edu}

\medskip

\noindent J. Xiong

\noindent School of Mathematical Sciences, Beijing Normal University\\
Beijing 100875, China\\[1mm]
Email: \textsf{jx@bnu.edu.cn}

\end{document}